\newcommand{\C}{\mathbb C}
\newcommand{\D}{\mathbb D}
\newcommand{\N}{\mathbb N}
\newcommand{\R}{\mathbb R}
\renewcommand{\H}{\mathbb H}
\renewcommand{\P}{\mathbb P}
\newcommand{\Z}{\mathbb Z}
\newcommand{\cF}{\mathcal F}
\newcommand{\cM}{\mathcal M}
\newcommand{\re}{\mathrm{Re}}
\newcommand{\im}{\mathrm{Im}}
\newcommand{\pa}{\partial}
\newcommand{\ex}{\mathbf{e}}
\renewcommand{\a}{\alpha}
\renewcommand{\b}{\beta}
\newcommand{\g}{\gamma}
\renewcommand{\d}{\delta}
\newcommand{\De}{\mathit{\Delta}}
\renewcommand{\l}{\lambda}
\renewcommand{\L}{\Lambda}
\newcommand{\h}{\vartheta}
\newcommand{\e}{\varepsilon}
\newcommand{\f}{\varphi}
\newcommand{\G}{\mathit{\Gamma}}
\newcommand{\W}{\mathit{\Omega}}
\renewcommand{\i}{\mathbf{i}}
\newcommand{\la}{\langle}
\newcommand{\ra}{\rangle}
\newcommand{\tr}{\;^t}
\newcommand{\sint}{\begin{matrix} \int\end{matrix}}
\newcommand{\pr}{\mathrm{pr}}
\newcommand{\comment}[1]{}
\newcommand{\ds}[1]{\displaystyle{#1}}
\title[Schwarz's map for $F_2$]
{Schwarz's map for Appell's second hypergeometric system 
with quarter integer parameters}
\author{Keiji Matsumoto}
\address[Matsumoto]{
Department of Mathematics,
Faculty of Science,
Hokkaido University,
Sapporo 060-0810, Japan
}
\email{matsu@math.sci.hokudai.ac.jp}
\author{Shohei Osafune}
\address[Osafune]
{DAITEC Co., Ltd., 
Kita-4 Higashi-1 2-3, Chuo-ku,
Sapporo 060-0034, Japan
}
\author{Tomohide Terasoma}
\address[Terasoma]{
Faculty of Science and Engineering,
Hosei University,
Koganei, Tokyo 184-8584, Japan
}
\email{terasoma@hosei.ac.jp}
\keywords{Schwarz's map, 
Appell's second hypergeometric system, Theta function,
Prym variety.}
\subjclass[2010]{Primary 14H42; Secondary 14H40, 32G20, 33C65.}
\date{\today}
\theoremstyle{plain} 
\newtheorem{theorem}{\indent\sc Theorem}[section]
\newtheorem{lemma}[theorem]{\indent\sc Lemma}
\newtheorem{cor}[theorem]{\indent\sc Corollary}
\newtheorem{proposition}[theorem]{\indent\sc Proposition}
\theoremstyle{definition} 
\newtheorem{definition}[theorem]{\indent\sc Definition}
\newtheorem{fact}[theorem]{\indent\sc Fact}
\newtheorem{remark}[theorem]{\indent\sc Remark}
\numberwithin{equation}{section}
\begin{document}
\maketitle
\begin{abstract}
We study Schwarz's map for Appell's second system $\cF_2$ of hypergeometric
differential equations in two variables with parameters
$a=c_1=c_2=\frac{1}{2}$, $b_1=b_2=\frac{1}{4}$. 
By using theta functions with characteristics, 
we give a defining equation of an analytic set in $\C^2\times \H$ 
of its image, and express its inverse.  
\end{abstract}

\tableofcontents 
\section{Introduction}
\label{sec:Intro}

The hypergeometric differential equation
$\cF(a,b,c)$  
with complex parameters $a,b,c$ is defined by  
\begin{equation}
\label{eq:HGDE}
\cF(a,b,c):   
\big[\frac{d }{d  x}(c-1+x\frac{d }{d  x})
    -(a+x\frac{d }{d  x})(b+x\frac{d }{d  x})\big] f(x)=0.
  \end{equation}
It is satisfied by the hypergeometric series
\begin{equation}
\label{eq:HGS}  
F(a,b,c;x)=\sum_{n=0}^\infty \frac{(a,n)(b,n)}{(c,n)n!} x^{n}
\quad
\Big(
\begin{array}{l}
|x|<1,\ c\notin -\N_0=\{0,-1,-2,\dots\}\\
(a,n)=a(a+1)\cdots(a+n-1)=\frac{\G(a+n)}{\G(a)}
\end{array}
     \Big),
\end{equation}
and by Euler type integrals 
\begin{equation}
\label{eq:Int-rep-HGS}  
\int_\sigma t^{b}(1-t)^{c-b}(1-tx)^{-a}\frac{dt}{t(1-t)},
\end{equation}
where $\sigma$ are $1$-chains with some kinds of boundary conditions.
Since $\cF(a,b,c)$ is a linear equation of rank $2$ with 
singular points $x=0,1,\infty$, 
the space of solutions to $\cF(a,b,c)$ 
on a small neighborhood $V$ of a point $x\in \C-\{0,1\}$ 
is a $2$-dimensional vector space. 
Schwarz's map is defined by the analytic continuation of the 
ratio of linearly independent solutions to $\cF(a,b,c)$ on $V$ to 
the space $\C-\{0,1\}$ (or the universal covering of $\C-\{0,1\}$). 
Though it is multi-valued in general, 
if $a,b,c$ satisfy $1/|1-c|,1/|c-a-b|,1/|a-b|\in \{2,3,4,\dots,\infty\}$, 
then its inverse is single valued, moreover if they satisfy 
$|1-c|+|c-a-b|+|a-b|<1$, then its image is isomorphic to an open dense subset 
in the upper-half plane $\H$. 
There are several detailed studies of Schwarz's map and its inverse  
for some special parameters. 

In this paper, we consider Appell's second system 
$\cF_2(a,b_1,b_2,c_1,c_2)$ of hypergeometric differential 
equations in two variables with parameters $a,b_1,b_2,c_1,c_2$, 
see \eqref{eq:Appell-F2} for their explicit forms.
It is a regular holonomic system of rank $4$ with regular locus 
$$X=\{(x_1,x_2)\in \C^2\mid x_1(1-x_1)x_2(1-x_2)(1-x_1-x_2)\ne0\}.$$
We specialize parameters as 
$$a=c_1=c_2=\frac{1}{2},\quad  b_1=b_2=\frac{1}{4},$$ 
and set 
$$\cF_2=\cF_2(\frac{1}{2},\frac{1}{4},\frac{1}{4},\frac{1}{2},\frac{1}{2}).$$
By taking the ratio of a fundamental system of solutions to 
$\cF_2$ around a point $(\dot x_1,\dot x_2)$ in the regular locus $X$, 
we have a map from a neighborhood $U$ of $(\dot x_1,\dot x_2)$ 
to the complex projective space $\P^3$. 
Our main object in this paper is 
Schwarz's map $\mathbf{f}$ for $\cF_2$ defined by the analytic 
continuation of this map to $X$ (or to the universal covering of $X$). 
Since our specialized parameters satisfy $a=c_1=c_2$,  
results in \cite[\S1.3]{MSTY} imply that $\cF_2$ is reducible, and that 
Euler type double integrals \eqref{eq:F2-Int-rep} for solutions to $\cF_2$ 
reduce to products of path integrals and elementary functions as in 
Proposition \ref{prop:a=c1=c2} and \eqref{eq:periods}, where 
a variable $$z=\frac{1-x_1-x_2}{(1-x_1)(1-x_2)}$$
is introduced from the variables $x_1,x_2$ in $\cF_2$. 
As a consequence of these, we have classical Schwarz's map for 
$\cF(\frac{1}{4},\frac{1}{4},1)$ 
from our Schwarz's map $\mathbf{f}$ for $\cF_2$ 
by restricting variables to $z$ and taking a $2$-dimensional subspace
associated with solutions to $\cF(\frac{1}{4},\frac{1}{4},1)$. 
And the integrals \eqref{eq:periods} suggest us to 
introduce a family of algebraic curves:
$$\bigcup_{z\in \C-\{0,1\}} C_z,\quad C_z: w^4=v^3(1-v)(1-vz).$$
Since every member $C_z$ of this family can be regarded as a 
cyclic quadruple covering of the complex projective line $\P^1$ 
with covering transformation 
$$\sigma:C_z\ni (v,w)\mapsto (v,\i w)\in C_z\quad (\i=\sqrt{-1}),$$
it is of genus $3$ by Hurwitz's formula. 
We regard \eqref{eq:periods} as 
path integrals of a holomorphic $1$-form $\eta_1=dv/w$ on $C_z$,  
which belongs to the $(-1)$-eigenspace $H^0_{-\sigma^2}(C_z,\W)$ 
in the space $H^0(C_z,\W)$ of holomorphic $1$-forms on $C_z$ under the 
action of the involution $(\sigma^2)^*$.
In studies in \cite{MSTY} and \cite{MT}, we consider 
a family of algebraic curves of genus $2$ and 
the Abel-Jacobi map from each member of this family to its Jacobi variety, 
which is a principally polarized abelian variety.  
In this paper,  instead of the Jacobi variety of $C_z$, 
we consider the Prym variety 
$$H_{-\sigma^2}^0(C_z,\W)^*/H_1^{-\sigma^2}(C_z,\Z)$$
of $C_z$ with respect to the involution $\sigma^2$,
where $H_1^{-\sigma^2}(C_z,\Z)$ is the $(-1)$-eigenspace in  
$H_1(C_z,\Z)$ under the action of the involution $\sigma^2$, 
and $H_{-\sigma^2}^0(C_z,\W)^*$ is regarded as a subspace of $H_1(C_z,\C)$
through the pairing between $H_{-\sigma^2}^0(C_z,\W)$ and $H_1(C_z,\C)$
defined by the integral.
It is a $2$-dimensional abelian variety with non-principal polarization. 
Though the Prym variety itself does not seem to have a simple structure, 
there exists a subgroup $\L$ of index $2$ in $H_1^{-\sigma^2}(C_z,\Z)$ 
such that 
$$J_\L(C)=H_{-\sigma^2}^0(C_z,\W)^*/\L$$ admits a decomposition 
$T_\tau\oplus T_\tau$, where $T_\tau$ is the complex torus $\C/(\Z\tau+\Z)$
defined by an element $\tau \in \H$.
We define the Abel-Jacobi $\L$-map by 
$$\jmath_\L:C_z \ni P \mapsto \big(
H_{-\sigma^2}^0(C_z,\W)\ni \f \mapsto (1-\sigma^2)\int_{P_0}^P \f \in 
\C\big)\in J_\L(C)
\simeq T_\tau\oplus T_\tau,$$
where $P_0=(0,0)\in C_z$ is fixed as an initial point.
Though the integral depends on paths connecting $P_0$ to $P$, 
it uniquely gives an element of $J_\L(C)=H_{-\sigma^2}^0(C_z,\W)^*/\L$
since $\L$ includes $(1-\sigma^2)H_1(C_z,\Z)$. 
Proposition \ref{prop:inje-jmath-L} states 
that the map $\jmath_\L$ is of degree $1$, and that 
the compound 
of $\jmath_\L$ and the first (or second)  projection 
$T_\tau\oplus T_\tau\to T_\tau$ is of degree $2$.
By relating  Schwarz's map $\mathbf{f}$ for $\cF_2$ to 
the Abel-Jacobi $\L$-map $\jmath_\L$, we determine the image of $\mathbf{f}$
in \eqref{eq:theta-function} 
as an analytic set in $\C^2\times \H$ in terms of theta functions 
$\h_{k,\ell}(y,\tau)$ associated with $T_\tau$. 
To obtain its defining equation, 
we represent a meromorphic function $h(v)=\dfrac{v(v-1)}{v-1/z}$ on $C_z$ 
in terms of $\h_{k,\ell}(y,\tau)$, the Abel-Jacobi $\L$-map $\jmath_\L$, 
and the two projections $T_\tau\oplus T_\tau\to T_\tau$.
We have two representations, and their coincidence implies 
the defining equation. 
By considering branched values of this meromorphic function, we have 
$$z=\frac{4\h_{01}(0,\tau)^4\h_{10}(0,\tau)^4}{\h_{00}(0,\tau)^8},
$$
which is an expression of the inverse of 
Schwarz's map for $\cF(\frac{1}{4},\frac{1}{4},1)$.
To express the inverse of Schwarz's map $\mathbf{f}$ for $\cF_2$, 
we represent a meromorphic function $1-v$ on $C_z$ 
in terms of $\h_{k,\ell}(y,\tau)$ and the Abel-Jacobi $\L$-map $\jmath_\L$
in Theorem \ref{th:(1-v)-theta-exp}. 
By this representation, we can easily obtain an 
expression of $\mathbf{f}^{-1}$, see Theorem \ref{th:inv-period}.

We study the monodromy representation of $\cF_2$ in Section \ref{sec:monod}. 
It is known that the fundamental group of $X$ is generated by five loops.  
The circuit matrices of $\cF_2(a,b,c)$ along these loops 
are given in \cite[Corollary 3.12]{MSTY}, which is valid 
even in the case $a=c_1=c_2$. 
We give their explicit forms after specializing parameters and 
taking conjugates compatible with the Abel-Jacobi $\L$-map $\jmath_\L$.
We characterize the group generated by these five matrices 
as a congruence subgroup of $GL_4(\Z[\i])$ in Theorem \ref{th:str-monod}. 

In Appendix, we investigate the structure of the field $\C(v,w)$ of 
rational functions on $C_z$. As pointed out in previous, 
to express the inverse $\mathbf{f}^{-1}$, the key is 
the representation of the meromorphic function 
$1-v$ on $C_z$ in Theorem \ref{th:(1-v)-theta-exp}.
Though our proof of this theorem is based not on Appendix but  
on elliptic function theory,  
we find the representation through observations in Appendix. 
We give an alternative proof of Theorem \ref{th:(1-v)-theta-exp} 
based on Kummer theory for fields of rational functions on 
curves given by quotients of $C_z$ by several groups 
generated by $\sigma$ and an involution $\iota$ associated with 
the complex torus $T_\tau$.
We give generators of a series of quadratic extensions from  
$\C(v)^\iota$ to $\C(v,w)$,   
where $\C(v)^\iota$ denotes the fixed field of the rational function field 
$\C(v)$ under the involution $\iota$, and $[\C(v,w):\C(v)^\iota]=8$.

\section{Appell's second system $\cF_2(a,b,c)$}
\label{sec:Appell-F2}
In this section, by referring \cite{MSTY}, \cite{MT} and \cite{Y1},
we introduce some facts about Appell's second system $\cF_2(a,b_1,b_2,c_1,c_2)$
of hypergeometric differential equations,
which is a generalization of the hypergeometric differential equation
$\cF(a,b,c)$. 
Appell's second hypergeometric series $F_2(a,b,c;x)$ is
defined by
$$F_2(a,b_1,b_2,c_1,c_2;x_1,x_2)=
\sum_{n_1=0}^\infty\sum_{n_2=0}^\infty
\frac{(a,n_1+n_2)(b_1,n_1)(b_2,n_2)}
{(c_1,n_1)(c_2,n_2)n_1!n_2!} x_1^{n_1}x_2^{n_2},
$$
where $x_1,x_2$ are main variables,
$a,b_1,b_2,c_1,c_2$ are complex parameters with
$c_1,c_2\notin -\N_0$.
It converges absolutely and uniformly on any compact set in the domain
$$\D=\{x=(x_1,x_2)\in \C^2\mid |x_1|+|x_2|<1\},$$
and satisfies hypergeometric differential equations 
\begin{equation}
\label{eq:Appell-F2}
\begin{array}{c}
\big[\dfrac{\pa}{\pa x_1}(c_1-1+x_1\dfrac{\pa}{\pa x_1})
    -(a+x_1\dfrac{\pa}{\pa x_1}+x_2\dfrac{\pa}{\pa x_2})
     (b_1+x_1\dfrac{\pa}{\pa x_1})\big] f(x)=0,\\[3mm]
\big[\dfrac{\pa}{\pa x_2}(c_2-1+x_2\dfrac{\pa}{\pa x_2})
    -(a+x_1\dfrac{\pa}{\pa x_1}+x_2\dfrac{\pa}{\pa x_2})
    (b_2+x_2\dfrac{\pa}{\pa x_2})\big]f(x)=0.
\end{array}
\end{equation}
The system generated by these partial differential equations is called 
Appell's second system $\cF_2(a,b,c)=\cF_2(a,b_1,b_2,c_1,c_2)$ of 
hypergeometric differential equations. 
It is a regular holonomic system of rank $4$
with singular locus
$$S=\{x\in \C^2\mid x_1x_2(1-x_1)(1-x_2)(1-x_1-x_2)=0\}\cup L_\infty,$$
where $L_\infty$ is the line at infinity in the projective plane $\P^2$.
\begin{fact}[{\cite[\S6.4]{Y1}}]
\label{fact:iintrep}
Euler type integrals
\begin{equation}
  \label{eq:F2-Int-rep}
  \iint_D u(t,x)dt, \quad 
  u(t,x)=  t_1^{b_1-1}(1-t_1)^{c_1-b_1-1}
   t_2^{b_2-1}(1-t_2)^{c_2-b_2-1}(1-t_1x_1-t_2x_2)^{-a},
 \end{equation}  
satisfy Appell's second system $\cF_2(a,b_1,b_2,c_1,c_2)$,  
where  $dt=dt_1\wedge dt_2$, and  $2$-chains $D$ 
satisfy some kind of boundary conditions. 
\end{fact}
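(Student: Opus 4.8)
The plan is to verify directly that $f(x)=\iint_D u(t,x)\,dt$ is annihilated by the two operators in \eqref{eq:Appell-F2}, which I denote by $L_1$ and $L_2$. Write $P=1-t_1x_1-t_2x_2$ and $u=A(t)P^{-a}$ with $A(t)=t_1^{b_1-1}(1-t_1)^{c_1-b_1-1}t_2^{b_2-1}(1-t_2)^{c_2-b_2-1}$, and set $\theta_i=x_i\frac{\pa}{\pa x_i}$. The mechanism behind the whole argument is the symmetry $\theta_iP=t_i\frac{\pa}{\pa t_i}P=-t_ix_i$, which lets one trade the $x$-derivatives appearing in $L_1,L_2$ for $t$-derivatives acting on $u$. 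First I would justify differentiating under the integral sign on compact subsets of the regular locus (using local uniform convergence there), reducing the assertion $L_if=0$ to the statement that $L_iu$, as a $2$-form in $t$ with $x$ as parameter, integrates to zero over $D$.

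The computational heart is to show that each $L_iu$ is \emph{exact} in $t$. Applying $\frac{\pa}{\pa x_1}$ and the Euler operators $\theta_1,\theta_2$ to $u$ and collecting terms by powers of $P$, a direct calculation gives
\[
L_1u=a(c_1t_1-b_1)P^{-1}u+a(a+1)\,t_1(t_1-1)\,x_1\,P^{-2}u,
\]
the constant ($P^0$) terms cancelling. The key observation is that the right-hand side is a single total derivative,
\[
L_1u=\frac{\pa}{\pa t_1}\!\left[\frac{a\,t_1(t_1-1)}{P}\,u\right],
\]
which one checks from $\frac{\pa}{\pa t_1}\log u=\frac{b_1-1}{t_1}-\frac{c_1-b_1-1}{1-t_1}+\frac{ax_1}{P}$: the $P^{-2}$ terms pin down the primitive as $a\,t_1(t_1-1)P^{-1}u$, and with this choice the $P^{-1}$ terms then match identically. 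By the symmetry $1\leftrightarrow2$ one obtains $L_2u=\frac{\pa}{\pa t_2}[a\,t_2(t_2-1)P^{-1}u]$.

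These identities say $L_1u\,dt_1\wedge dt_2=d_t\omega_1$ and $L_2u\,dt_1\wedge dt_2=d_t\omega_2$ with $\omega_1=a\,t_1(t_1-1)P^{-1}u\,dt_2$ and $\omega_2=-a\,t_2(t_2-1)P^{-1}u\,dt_1$. Stokes' theorem then gives $L_if=\iint_DL_iu\,dt=\int_{\pa D}\omega_i$, and the ``boundary conditions'' on the $2$-chains $D$ are exactly what forces these boundary integrals to vanish: either $\pa D=0$ when $D$ is a twisted (regularized, locally finite) cycle adapted to the multivaluedness of $u$, or the exponents $b_i-1$, $c_i-b_i-1$, $-a$ make $\omega_i$ vanish along the relevant faces $t_i\in\{0,1\}$ and $P=0$.

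I expect the main obstacle to be two-fold. The computation of $L_iu$ is long but mechanical once $x$-derivatives are converted to $t$-derivatives; the only real ingenuity is guessing the primitive $a\,t_i(t_i-1)P^{-1}u$, which the $P^{-2}$ term makes essentially forced. The genuinely delicate point is the rigorous vanishing of the boundary terms, since it hinges on the precise class of admissible chains $D$ hidden in ``some kind of boundary conditions''; a clean formulation uses the twisted homology of the complement of $\{t_1t_2(1-t_1)(1-t_2)P=0\}$ with the rank-one local system defined by $u$, in which $d_t$-exact integrands pair trivially with cycles.
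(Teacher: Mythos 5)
Your proposal is correct, and the key identity at its heart checks out: writing $P=1-t_1x_1-t_2x_2$ and expanding $L_1u$ in powers of $P^{-1}$, the $P^0$ terms do cancel (after rewriting $t_1x_1+t_2x_2=1-P$), one is left with $L_1u=a(c_1t_1-b_1)P^{-1}u+a(a+1)t_1(t_1-1)x_1P^{-2}u$, and this is exactly $\frac{\pa}{\pa t_1}\bigl[a\,t_1(t_1-1)P^{-1}u\bigr]$ — I verified both the $P^{-2}$ coefficient, which forces the primitive as you say, and the $P^{-1}$ coefficient $(2t_1-1)+(t_1-1)(b_1-1)+t_1(c_1-b_1-1)=c_1t_1-b_1$. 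Note, however, that the paper offers no proof of this statement at all: it is recorded as a Fact with a citation to Yoshida \cite[\S6.4]{Y1}, so there is no in-paper argument to compare against. Your derivation is the standard one (show the integrand is $d_t$-exact modulo the system, then invoke Stokes and the boundary conditions on $D$), and it is essentially what the cited reference and \cite{AK} do. Your handling of the delicate point is also the right one: for the specific chains $D_1,\dots,D_4$ used later in the paper, the boundaries are eliminated by regularization in the twisted homology of the complement of $\{t_1t_2(1-t_1)(1-t_2)P=0\}$ (this is exactly the role of the reference to \cite[\S3.2.4]{AK} in Fact \ref{fact:basis}), so the pairing of the exact form $d_t\omega_i$ with a twisted cycle vanishes; the alternative vanishing-of-$\omega_i$-on-faces argument needs care on the face $P=0$, where $\omega_i\sim P^{-a-1}$ does not decay for the parameters of interest, so the twisted-cycle formulation is the one to rely on.
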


\begin{fact}[\cite{Bo}]
\label{fact:irreducible}
Appell's second system $\cF_2(a,b_1,b_2,c_1,c_2)$ is reducible
if and only if at least one of
\begin{equation}
\label{eq:reducible}  
  a,\ b_1,\ b_2,\ b_1-c_1,\ b_2-c_2,\ a-c_1,\ a-c_2,\ a-c_1-c_2
\end{equation}  
is an integer.
\end{fact}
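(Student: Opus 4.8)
The plan is to identify the solution space of $\cF_2$ with the twisted homology of a line arrangement in the $t$-plane, and then to translate reducibility into a resonance condition on that arrangement.

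First I would use Fact \ref{fact:iintrep}: at a generic point $x\in X$ a basis of solutions is given by the integrals $\iint_D u(t,x)\,dt$ of the multivalued function $u(t,x)$ over a basis of twisted $2$-cycles. Fixing $x$, this identifies the solution space with the twisted homology $H_2(U_x,\cL_x)$, where $U_x=\C^2\setminus\mathcal A_x$ is the complement of the arrangement $\mathcal A_x$ of the five lines $t_1=0$, $t_1=1$, $t_2=0$, $t_2=1$, $1-t_1x_1-t_2x_2=0$, and $\cL_x$ is the rank-one local system whose local monodromies are $\exp(2\pi\i\mu)$ for the exponents $\mu$ of $u$, namely $\mu_1=b_1-1$, $\mu_2=c_1-b_1-1$, $\mu_3=b_2-1$, $\mu_4=c_2-b_2-1$, $\mu_5=-a$. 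Letting $x$ vary over $X$ makes $\pi_1(X)$ act by Gauss--Manin monodromy, and reducibility of $\cF_2$ is precisely reducibility of this representation.

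The core of the argument is the principle that the Gauss--Manin monodromy of such a configuration is irreducible exactly when $\cL_x$ is non-resonant, i.e. when for every dense edge $E$ of the projective closure $\overline{\mathcal A}_x\subset\P^2$ the sum of the exponents attached to the lines through $E$ lies outside $\Z$. I would then enumerate the dense edges and compute these sums. The six lines are codimension-one (hence dense) edges: the five finite ones give the exponents $\mu_1,\dots,\mu_5$, while the line at infinity $D_\infty$ carries, modulo $\Z$, the exponent $\mu_\infty\equiv-(\mu_1+\cdots+\mu_5)\equiv a-c_1-c_2$ forced by the balancing relation on $\P^2$; integrality of these six numbers is precisely $b_1,\,b_1-c_1,\,b_2,\,b_2-c_2,\,a,\,a-c_1-c_2\in\Z$. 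For generic $x$ no three finite lines are concurrent (triple concurrences occur only when $x$ enters the singular locus $S$), so the only remaining dense edges are the two points at infinity $P_v=\{t_1=0\}\cap\{t_1=1\}\cap D_\infty$ and $P_h=\{t_2=0\}\cap\{t_2=1\}\cap D_\infty$, where three lines meet. The exponent sums of the lines through them are, modulo $\Z$, $\mu_1+\mu_2+\mu_\infty\equiv a-c_2$ and $\mu_3+\mu_4+\mu_\infty\equiv a-c_1$, yielding the last two conditions $a-c_2,\,a-c_1\in\Z$. This reproduces exactly the list \eqref{eq:reducible}.

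It remains to turn the non-resonance criterion into the two implications. For the direction ``resonance $\Rightarrow$ reducible'' I would argue constructively: when the exponent sum over the lines through a dense edge lies in $\Z$, the local monodromy of $\cL_x$ around that edge is trivial, the regularization map from locally finite to ordinary twisted homology ceases to be an isomorphism, and (since the resonance is $x$-independent) its image or kernel is a proper $\pi_1(X)$-invariant subspace; the terminating cases $a,b_1,b_2\in-\N_0$, in which $F_2$ is a polynomial, are the transparent instances. The genuinely hard direction, and the main obstacle, is ``non-resonance $\Rightarrow$ irreducible'': one must show that under non-resonance the twisted cohomology concentrates in degree $2$ and that the resulting Gauss--Manin system admits no proper subsystem. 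Here I would invoke the vanishing theorem of Esnault--Schechtman--Viehweg for local systems on hyperplane-arrangement complements together with the irreducibility statement for non-resonant arrangement local systems; alternatively, one may realize $\cF_2$ as a Horn/GKZ $A$-hypergeometric system, for which the known irreducibility criterion converts non-resonance of the parameter vector along the faces of the defining cone into exactly the eight integrality conditions above.
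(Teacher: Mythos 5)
The paper gives no proof of this Fact: it is quoted from \cite{Bo}, where it is obtained by rewriting $\cF_2$ as a Horn/GKZ $A$-hypergeometric system and applying the Gelfand--Kapranov--Zelevinsky irreducibility criterion (non-resonance of the parameter vector against the faces of the defining cone). So the closest thing to ``the paper's route'' is exactly the alternative you mention in your final sentence. Your main route --- identifying solutions with twisted $2$-cycles for the rank-one local system on the complement of the six-line arrangement in $\P^2$ and testing resonance along dense edges --- is a genuinely different and natural path, and your bookkeeping is correct: the six lines give $b_1$, $b_1-c_1$, $b_2$, $b_2-c_2$, $a$, $a-c_1-c_2$ modulo $\Z$; for $x\in X$ the only triple points are the two points at infinity on the parallel pairs $\{t_1=0\}\cup\{t_1=1\}$ and $\{t_2=0\}\cup\{t_2=1\}$, contributing $a-c_2$ and $a-c_1$. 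This recovers \eqref{eq:reducible} exactly and has the merit of showing \emph{where} each condition comes from geometrically, which the GKZ computation obscures.

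As a proof, however, there is a genuine gap: the equivalence ``irreducible monodromy $\Leftrightarrow$ non-resonant'' is invoked as a principle rather than established, and the direction ``resonance $\Rightarrow$ reducible'' is not carried by your sketch. Two concrete problems. First, the identification of the solution space of $\cF_2$ with the twisted homology $H_2(U_x,\cL_x)$ via the integrals of Fact \ref{fact:iintrep} is itself only valid under non-integrality hypotheses (compare Fact \ref{fact:basis} and Remark \ref{rem:reduced-case}); at resonant parameters the twisted homology can change dimension or the period pairing can degenerate, so an invariant subspace of $H_2(U_x,\cL_x)$ does not automatically transfer to the rank-$4$ system. Second, even granting the identification, ``the regularization map ceases to be an isomorphism'' does not by itself yield a \emph{proper nonzero} $\pi_1(X)$-invariant subspace: one must check that its image (or kernel) is neither $0$ nor everything, and with several simultaneous resonances this needs a case analysis. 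To close this direction cleanly one should either exhibit an explicit proper invariant subspace for each of the eight integrality conditions (e.g.\ the factorizations of \cite[\S1.3]{MSTY} for $a=c_1$, or the terminating/Kummer-type solutions for $a,b_1,b_2\in-\N_0$), or simply fall back on the GKZ criterion of \cite{Bo}, which handles both implications at once. Your computation of the resonance data is the correct blueprint, but the write-up is not yet a complete proof.
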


We take a base point $\dot x=(\e,\e)$ of the complement
$$X=\{x\in \C^2\mid x_1(1-x_1)x_2(1-x_2)(1-x_1-x_2)\ne 0\}$$
of the singular locus $S$ of $\cF_2(a,b,c)$, where
$\e$ is a sufficiently small positive real number.
We give a basis of the space of local solutions to $\cF_2(a,b,c)$
on a small neighborhood $\dot U$ of $\dot x$.
\begin{fact}
\label{fact:basis}
Suppose that none of (\ref{eq:reducible}) is an integer. Then  
the space of local solutions to $\cF_2(a,b,c)$ on
$\dot U$ is spanned by the four integrals \ref{eq:F2-Int-rep} for 
$D=D_1,\dots,D_4$, which are indicated in Figure \ref{fig:2-cycles}.
\begin{figure}
\includegraphics[width=8cm]{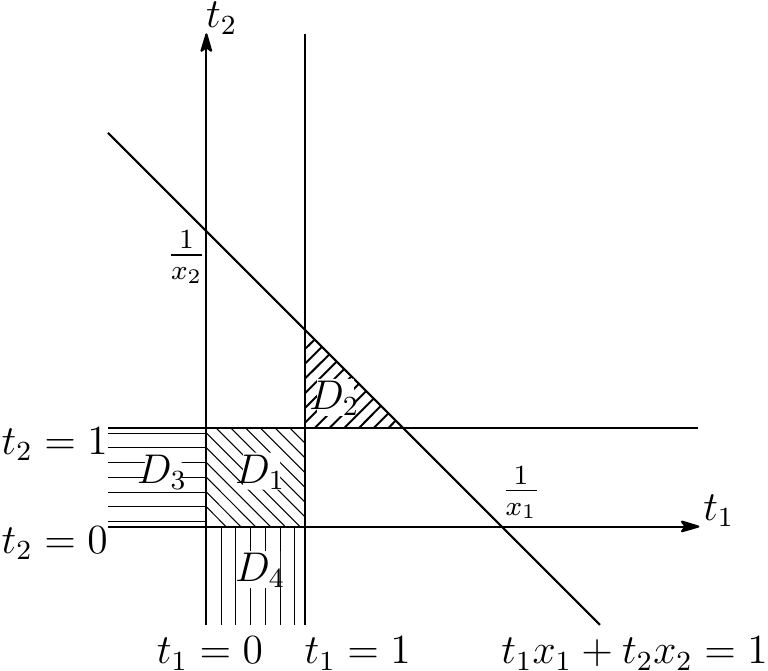}
\caption{Areas of integrals}
\label{fig:2-cycles}
\end{figure}
Here the branches of the integrand on these area for $x\in \dot U\cap \R^2$ 
are assigned by the arguments of $t_1,1-t_1,t_2,1-t_2,1-t_1x_1-t_2x_2$ as 
in Table \ref{tab:arg-t}, 
\begin{table}[htb]
  \centering
  \begin{tabular}{|c|c|c|c|c|c|}
    \hline
          & $t_1$ & $1-t_1$ &$t_2$ & $1-t_2$ & $1-t_1x_1-t_2x_2$\\
    \hline
    $D_1$ &  $0$  & $0$     & $0$  &  $0$   &  $0$   \\    
    $D_2$ &  $0$  & $\pi$   & $0$  &  $\pi$ &  $0$   \\
    $D_3$ & $-\pi$& $0$     & $0$  &  $0$   &  $0$   \\
    $D_4$ &  $0$  & $0$     &$-\pi$&  $0$   &  $0$   \\
    \hline
  \end{tabular}\\[2mm]
  \caption{Branches of the integrand}
  \label{tab:arg-t}
\end{table}
and the boundaries of $D_1,\dots,D_4$ are eliminated as in \cite[\S3.2.4]{AK} 
under our non-integral assumption.
\end{fact}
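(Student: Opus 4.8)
The plan is to combine the rank count with a computation in twisted homology. Since $\cF_2(a,b,c)$ is holonomic of rank $4$, its space of local solutions on $\dot U$ is exactly $4$-dimensional, and by Fact \ref{fact:iintrep} each of the four integrals $\iint_{D_i}u(t,x)\,dt$ belongs to this space. It therefore suffices to prove that these four integrals are linearly independent as multivalued analytic functions on $\dot U$.

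First I would set up the twisted coefficients. For $x$ fixed near $\dot x$, the integrand $u(t,x)$ is a multivalued function on the complement $M_x=\C^2\setminus\mathcal{A}_x$ of the five lines $t_1=0$, $t_1=1$, $t_2=0$, $t_2=1$, $1-t_1x_1-t_2x_2=0$, and defines a rank-one local system $\mathcal L$ on $M_x$, with local monodromy exponents $b_1,\,c_1-b_1,\,b_2,\,c_2-b_2,\,-a$ along these divisors. The assumption that none of the quantities in \eqref{eq:reducible} is an integer guarantees that $\mathcal L$ is non-resonant, both along the five lines and along the line at infinity of the projectivized arrangement. By Aomoto--Kita theory the twisted homology is then concentrated in the middle degree, $H_k(M_x,\mathcal L)=0$ for $k\ne 2$, and a count of the double points of $\mathcal{A}_x$ gives $\dim H_2(M_x,\mathcal L)=\chi(M_x)=1-5+8=4$, matching the rank. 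The four areas $D_1,\dots,D_4$ of Figure \ref{fig:2-cycles} are chambers of the real arrangement $\mathcal{A}_x$; following \cite[\S3.2.4]{AK} each is replaced by a regularized twisted cycle $\mathrm{reg}(D_i)$, whose closedness and well-definedness hold precisely under the non-integral assumption, and the branch data of Table \ref{tab:arg-t} fix the section of $\mathcal L$ loaded on each chamber. Independence of these cycles in $H_2(M_x,\mathcal L)$ transfers to independence of the integrals through the perfectness of the twisted period pairing $H_2(M_x,\mathcal L)\times H^2_{dR}(M_x,\nabla)\to\C$: a vanishing relation $\sum_i c_i\iint_{D_i}u\,dt\equiv0$ on $\dot U$ would force the cycle $\sum_i c_i\,\mathrm{reg}(D_i)$ to pair to zero with $[dt]$ and, after transport by the Gauss--Manin connection in $x$, with all of its monodromy translates; as these span $H^2_{dR}(M_x,\nabla)$, perfectness yields $\sum_i c_i\,\mathrm{reg}(D_i)=0$.

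The step I expect to be the main obstacle is thus showing that $\mathrm{reg}(D_1),\dots,\mathrm{reg}(D_4)$ are genuinely independent in $H_2(M_x,\mathcal L)$, a point one cannot read off from chamber counting alone because $\mathcal{A}_x$ is not in general position (the pairs $t_1=0,\,t_1=1$ and $t_2=0,\,t_2=1$ are parallel). The concrete way I would settle it is to expand each integral at the normal-crossing singularity $x=0$: localizing the contribution of the end of the $t_1$- or $t_2$-range where the relevant factor of $u$ degenerates produces, alongside a holomorphic part, a branch term carrying a non-integral power of $x_1$ or $x_2$. One checks that $\iint_{D_1}u\,dt$ is holomorphic at $x=0$, while $\iint_{D_3}u\,dt$, $\iint_{D_4}u\,dt$, and $\iint_{D_2}u\,dt$ acquire the branch monomials $x_1^{1-c_1}$, $x_2^{1-c_2}$, and $x_1^{1-c_1}x_2^{1-c_2}$ respectively; thus the four integrals are triangular with respect to the four characteristic exponents $(0,0)$, $(1-c_1,0)$, $(0,1-c_2)$, $(1-c_1,1-c_2)$ of $\cF_2(a,b,c)$ at $x=0$, with nonzero leading coefficient on each new monomial. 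Because $c_1,c_2\notin\Z$ these exponents are pairwise distinct modulo $\Z^2$ — equivalently, the joint eigenvalues of the local monodromies around $x_1=0$ and $x_2=0$ are all distinct — so no nontrivial linear combination of the integrals can vanish. This would give the required independence and identify $D_1,\dots,D_4$ with the four Kummer-type local solutions, completing the argument.
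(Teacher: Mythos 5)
The paper offers no proof of this statement: it is imported as a \emph{Fact} from the literature, with \cite[\S3.2.4]{AK} cited only for the regularization of the boundaries of $D_1,\dots,D_4$, so there is no in-paper argument to compare yours against. Your framework --- the rank count, the rank-one local system $\mathcal L$ on the complement of the five lines, non-resonance from the hypothesis, vanishing outside the middle degree with $\dim H_2(M_x,\mathcal L)=\chi(M_x)=1-5+8=4$, regularized chambers, and the transfer of independence through the perfect period pairing together with irreducibility of the monodromy --- is the standard Aomoto--Kita machinery that the cited sources rely on, and that part is sound.

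The genuine gap is in the step you yourself single out as the crux. Separating the four integrals by the characteristic exponents $(0,0)$, $(1-c_1,0)$, $(0,1-c_2)$, $(1-c_1,1-c_2)$ at $x=0$ requires $c_1,c_2\notin\Z$, and this is \emph{not} implied by the hypothesis that none of the quantities in \eqref{eq:reducible} is an integer: that list contains $b_1$, $b_1-c_1$, $a-c_1$, $a-c_1-c_2$, but not $c_1$ or $c_2$ themselves. For instance $(a,b_1,b_2,c_1,c_2)=(\frac12,\frac13,\frac13,1,1)$ satisfies every condition in \eqref{eq:reducible}, yet $c_1=c_2=1$, the four exponents collide, logarithmic solutions appear at $x=0$, and the argument via distinct joint eigenvalues of the local monodromies collapses. (This is harmless for the parameters actually used in this paper, where $c_1=c_2=\frac12$, but the Fact is stated in general.) A second, smaller weakness is that the triangularity of the integrals with respect to these exponents and the nonvanishing of the leading coefficients are asserted rather than checked; these are Gauss-type connection coefficients whose Gamma-factor numerators must be verified to be finite and nonzero under the stated hypothesis. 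Both problems are avoided by proving the independence of $\mathrm{reg}(D_1),\dots,\mathrm{reg}(D_4)$ directly in $H_2(M_x,\mathcal L)$, for example by computing the $4\times 4$ intersection matrix against a dual family of twisted cycles and checking that its determinant is a product of factors of the form $\mu_i-1$ and $\mu_i\mu_j-1$ (with $\mu$ the local monodromy data introduced at the end of Section \ref{sec:Appell-F2}), which are nonzero precisely under the non-integrality conditions actually assumed.
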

\begin{remark}
  \label{rem:reduced-case}
  The basis in Fact \ref{fact:basis}  is valid even in cases
$a-c_1\in \N_0$ or $a-c_2\in \N_0$. 
\end{remark}

When the parameters $a,c_1,c_2$ of $\cF_2(a,b,c)$ satisfy $a=c_1=c_2$,
the structure of its fundamental system of solutions is studied in
\cite[Theorem 2.4]{MSTY}, which suggests us that 
the fundamental solutions to $\cF_2(a,b,c)$
in Fact \ref{fact:basis} can be expressed in terms of the hypergeometric series
$F(a,b,c;x)$ and Appell's first hypergeometric series 
$$F_1(a,b_1,b_2,c;x_1,x_2)=\sum_{n_1=0}^\infty\sum_{n_2=0}^\infty
\frac{(a,n_1+n_2)(b_1,n_1)(b_2,n_2)}{(c,n_1+n_2)n_1!n_2!}x^{n_1}x^{n_2},$$
admitting an Euler type integral representation 
$$
\frac{1}{B(a,c-a)}\int_0^1 t_1^{a-1}(1-t_1)^{c-a-1}(1-t_1x_1)^{-b_1}
(1-t_1x_2)^{-b_2}dt_1.
$$
In fact, by following results in \cite[\S1.3.2]{MSTY} and \cite[\S2.4]{MT}, 
we have the expressions of $\iint_{D_i} u(t,x)dt$ $(i=1,\dots,4)$.
\begin{proposition}
\label{prop:a=c1=c2}
Suppose that
$$a,b_1,b_2,b_1-c_1,b_2-c_2,a-c_1-c_2\notin \Z.
$$
If the parameters of Appell's second system $\cF_2(a,b,c)$ satisfy 
$a=c_1=c_2$ then the solutions $\iint_{D_i} u(t,x)dt$ in 
Fact \ref{fact:basis} reduce to 
\begin{align*}
  \iint_{D_1} u(t,x)dt =&B(b_2,a-b_2)(1-x)^{-b}
                         \int_{-\infty}^0 (-s_1)^{b_1-1}(1-s_1)^{b_2-a}
                         (1-s_1z)^{-b_2}ds_1\\
  =&B(b_1,a-b_1)B(b_2,a-b_2)
     (1-x_1)^{-b_1}(1-x_2)^{-b_2}
     F(b_1,b_2,a;1-z),\\
  \iint_{D_2} u(t,x)dt =
&  \ex(a\!-\!\frac{b_1\!+\!b_2}{2})B(a\!-\!b_2,1\!-\!a)(1-x)^{-b}\!
   \int_{1/z}^\infty s_1^{b_1-1}(s_1-1)^{b_2-a}(s_1z-1)^{-b_2}ds_1\\
  =&\ex(a\!-\!\frac{b_1\!+\!b_2}{2})
 \frac{\G(a-b_2)\G(1-a)\G(a-b_1)}{\G(a-b_1-b_2+1)}(1-x_1)^{b_2-a}(1-x_2)^{b_1-a}
      \\
  &\cdot (1-x_1-x_2)^{a-b_1-b_2}   F(a-b_1,a-b_2,a-b_1-b_2+1;z),\\
\iint_{D_3} u(t,x)dt
  =&B(b_2,a-b_2)\ex(\frac{1\!-\!b_1}{2})(1-x)^{-b}
   \cdot\int_0^{1-x_1} s_1^{b_1-1}(1-s_1)^{b_2-a}(1-s_1z)^{-b_2}ds_1\\
  =&\frac{B(b_2,a\!-\!b_2)}{b_1}\ex(\frac{1\!-\!b_1}{2})(1-x_2)^{-b_2}
  F_1\big(b_1,a\!-\!b_2,b_2,b_1\!+\!1;1-x_1,\frac{1\!-\!x_1\!-\!x_2}{1-x_2}\big),\\
  \iint_{D_4} u(t,x)dt
=&B(b_1,a-b_1)\ex(\frac{1\!-\!b_2}{2})(1-x)^{-b}
\cdot\int_0^{1-x_2} s_1^{b_2-1}(1-s_1)^{b_1-a}(1-s_1z)^{-b_1}ds_1\\
=&\frac{B(b_1,a\!-\!b_1)}{b_2}\ex(\frac{1\!-\!b_2}{2})(1-x_1)^{-b_1}
F_1\big(b_2,b_1,a\!-\!b_1,b_2\!+\!1;\frac{1\!-\!x_1\!-\!x_2}{1-x_1},1-x_2\big)
  \\
  =&B(b_1,a-b_1)\ex(\frac{1-b_2}{2})x_1^{1-a}x_2^{1-a}
     (1-x_1)^{a-b_1-1}(1-x_2)^{a-b_2-1}
     \\
 &\cdot\int^1_{1-x_1} s_2^{b_1-a}(1-s_2)^{b_2-1}(1-s_2z)^{a-b_2-1}ds_2,\\
\end{align*}
where we set 
$$(1-x)^{-b}=(1-x_1)^{-b_1}(1-x_2)^{-b_2},$$
and \begin{equation}\label{eq:z}
z=\frac{1-x_1-x_2}{(1-x_1)(1-x_2)}, 
\end{equation}
belonging  to the open interval $(0,1)$ for $(x_1,x_2)\in \dot U\cap \R^2$.  
We regard these formulas as the continuation of
the equalities for $(x_1,x_2)\in \dot U\cap \R^2$,     
and each integrand supposed to be positive real on 
the integration intervals in this restricted case. 
When improper integrals do not converge,  
integration intervals are regraded as regularized ones
in sense of \cite[\S2.3.2]{AK}
\end{proposition}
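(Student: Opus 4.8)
The plan is to reduce each double integral $\iint_{D_i}u(t,x)\,dt$ to a single path integral by integrating out one variable, using that the specialization $c_1=c_2=a$ makes this inner integral degenerate. Freeze $t_1$ and put $A=1-t_1x_1$. For $D_1,D_2,D_3$ I would integrate $t_2$ first: since $c_2=a$, the inner kernel $t_2^{b_2-1}(1-t_2)^{a-b_2-1}(A-t_2x_2)^{-a}$ is, up to $A^{-a}$, an Euler kernel for $\cF(a,b_2,a)$, whose cycle integrals are algebraic because $F(a,b_2,a;\l)=(1-\l)^{-b_2}$. Over a range joining two of the singular points $t_2=0,1,A/x_2$ the inner integral therefore equals a Beta constant (carrying a phase fixed by Table \ref{tab:arg-t}) times the algebraic function $(1-t_1x_1)^{b_2-a}(1-t_1x_1-x_2)^{-b_2}$; the constant is $B(b_2,a-b_2)$ when $t_2\in(0,1)$ (cases $D_1,D_3$) and $B(a-b_2,1-a)$ when $t_2$ runs between $1$ and $A/x_2$ (case $D_2$); that the $(1,A/x_2)$-cycle is still a scalar multiple of $(1-\l)^{-b_2}$ follows by matching the exponent $-b_2$ at $\l=1$, using $b_2\notin\Z$. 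This collapses $\iint_{D_i}u\,dt$ to a constant times
\[
\int t_1^{b_1-1}(1-t_1)^{a-b_1-1}(1-t_1x_1)^{b_2-a}(1-t_1x_1-x_2)^{-b_2}\,dt_1,
\]
with finite singularities at $t_1=0,1,1/x_1,(1-x_2)/x_1$. The case $D_4$ is the mirror image under $1\leftrightarrow2$: there $t_2<0$, so one integrates $t_1\in(0,1)$ first and the inner constant is $B(b_1,a-b_1)$.

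Second, I would introduce $z$ through the single Möbius substitution
\[
s_1=\frac{(1-x_1)t_1}{t_1-1},\qquad t_1=\frac{s_1}{s_1-(1-x_1)},
\]
which sends $t_1=0,1,1/x_1,(1-x_2)/x_1$ to $s_1=0,\infty,1,1/z$, the last being exactly $1/z=\frac{(1-x_1)(1-x_2)}{1-x_1-x_2}$. Computing the Jacobian and regrouping powers, the prefactor $(1-x)^{-b}=(1-x_1)^{-b_1}(1-x_2)^{-b_2}$ factors out and the kernel becomes $(-s_1)^{b_1-1}(1-s_1)^{b_2-a}(1-s_1z)^{-b_2}$ (the $D_2$ branches turning the signs into the factors $(s_1-1)^{b_2-a}$, $(s_1z-1)^{-b_2}$). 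The four ranges transform uniformly: $t_1\in(0,1)\mapsto s_1\in(-\infty,0)$ for $D_1$; $t_1\in(1,(1-x_2)/x_1)\mapsto s_1\in(1/z,\infty)$ for $D_2$; $t_1\in(-\infty,0)\mapsto s_1\in(0,1-x_1)$ for $D_3$; and the mirror range for $D_4$. This yields the first integral expression on each line of the proposition.

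Third, I would evaluate these single integrals. For $D_1,D_2$ the $s_1$-range joins two singular points, so it is again a complete Euler integral: for $D_1$, mapping $(-\infty,0)$ to $(0,1)$ gives $B(b_1,a-b_1)F(b_1,b_2,a;1-z)$, which with the inner $B(b_2,a-b_2)$ yields the product $B(b_1,a-b_1)B(b_2,a-b_2)$ of the second line; for $D_2$, the evaluation on $(1/z,\infty)$ gives $B(a-b_1,1-b_2)F(a-b_1,a-b_2,a-b_1-b_2+1;z)$, whose product with the inner $B(a-b_2,1-a)$ is the Gamma-quotient $\frac{\G(a-b_2)\G(1-a)\G(a-b_1)}{\G(a-b_1-b_2+1)}$. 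For $D_3,D_4$ the $s_1$-range instead ends at the generic point $1-x_1$ (resp.\ $1-x_2$); rescaling $s_1=(1-x_1)\tau$ turns it into the Euler representation of Appell's $F_1$ with normalizing constant $B(b_1,1)=1/b_1$, producing $F_1(b_1,a-b_2,b_2,b_1+1;1-x_1,\frac{1-x_1-x_2}{1-x_2})$ and, symmetrically, the $F_1$ form for $D_4$; the alternative last expression for $D_4$ follows from a second Möbius substitution relabelling the singular points, which recasts the integral over $(0,1-x_2)$ as the one over $(1-x_1,1)$ displayed in the proposition.

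The conceptual input is exhausted by the two degenerate ${}_2F_1$ evaluations and the single Möbius substitution; the real work, and the main source of error, is the bookkeeping of phases and normalizations. For each of $D_2,D_3,D_4$ I must propagate the phase $\ex(\cdot)$ coming from the prescribed arguments of $t_1,1-t_1,t_2,1-t_2,1-t_1x_1-t_2x_2$ in Table \ref{tab:arg-t} through both the inner evaluation and the substitution: e.g.\ the two branches $1-t_1,1-t_2$ of argument $\pi$ for $D_2$ combine to exactly $\ex(a-\frac{b_1+b_2}{2})$, while the branch $t_1$ of argument $-\pi$ for $D_3$ gives $\ex(\frac{1-b_1}{2})$. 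A related delicate point is convergence: several ranges produce improper integrals near $t_i=1$, $t_i=\infty$, or the singular locus, so all identities above must be read for the regularized chains of \cite[\S2.3.2]{AK}, and one must verify that regularization commutes with the inner evaluation and the change of variables. Once these phases and Beta/Gamma constants are pinned down, each line of the proposition follows, recovering the computations of \cite[\S1.3.2]{MSTY} and \cite[\S2.4]{MT}.
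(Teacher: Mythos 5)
Your plan is correct, and it is essentially the argument the paper delegates to \cite[\S1.3.2]{MSTY} and \cite[\S2.4]{MT} --- the paper gives no proof of this proposition beyond that citation. The two pillars of your reduction are exactly right: collapsing the inner $t_2$-integral via $F(a,b_2,a;\lambda)=(1-\lambda)^{-b_2}$ (one-dimensionality of the exponent-$(-b_2)$ solution space at $\lambda=1$ justifies the $(1,A/x_2)$-cycle case since $b_2\notin\Z$), and the M\"obius substitution $s_1=(1-x_1)t_1/(t_1-1)$ sending $t_1=0,1,1/x_1,(1-x_2)/x_1$ to $s_1=0,\infty,1,1/z$; moreover your constants and phases check out against the stated formulas, e.g.\ $B(a-b_2,1-a)B(a-b_1,1-b_2)=\frac{\G(a-b_2)\G(1-a)\G(a-b_1)}{\G(a-b_1-b_2+1)}$, $B(b_1,1)=1/b_1$ for the $F_1$ normalization, and $\ex(a-\frac{b_1+b_2}{2})$, $\ex(\frac{1-b_1}{2})$, $\ex(\frac{1-b_2}{2})$ as the exact contributions of the arguments prescribed in Table \ref{tab:arg-t}. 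The only items left genuinely unverified are routine: the explicit limit computation fixing the constant $B(a-b_2,1-a)$ for the $D_2$ inner cycle, and the second M\"obius change giving the alternative $(1-x_1,1)$-integral for $D_4$.
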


We can make the analytic continuation of any local solution $f(x)$ to 
$\cF_2(a,b,c)$ on $\dot U$ along any path in $X$. 
In particular, a loop $\rho$ in $X$ with terminal $\dot x$ 
gives rise to a linear transformation 
$$\cM_\rho:f(x)\mapsto \rho_*(f)(x)$$ 
of the space of local solutions to $\cF_2(a,b,c)$ on $\dot U$.
Here  $\rho_*(f)(x)$ denotes the analytic continuation of $f(x)$ along $\rho$, 
and $\cM_\rho$ is called the circuit transformation along $\rho$. 
The correspondence $\rho\mapsto \cM_\rho$ induces a homomorphism 
from the fundamental group  $\pi_1(X,\dot x)$ to 
the general linear group of the space of local solutions to 
$\cF_2(a,b,c)$ on $\dot U$.
This homomorphism is called the monodromy representation of $\cF_2(a,b,c)$.
Its detailed structure is studied in \cite[\S3]{MSTY}.   
As one of its results, for loops $\rho_1,\dots,\rho_5$ generating 
$\pi_1(X,\cdot x)$, 
\cite[Corollary 3.12]{MSTY}  gives 
the representation matrices $M_i^\mu$ $(i=1,\dots,5)$ of 
the circuit transformations along $\rho_i$ 
with respect to the basis in Fact \ref{fact:basis}, 
where $\mu$ consists of parameters 
$$\mu=(\mu_1,\dots,\mu_5)=(e^{2\pi\i(b_1-1)},
e^{2\pi\i(c_1- b_1-1)}, e^{2\pi\i(b_2-1)},e^{2\pi\i(c_2- b_1-1)}, e^{2\pi\i(-a)}).
$$

\section{Schwarz's map for $\cF_2$}
\label{sec:Schwarz's map}
Hereafter we fix the parameters of $\cF_2(a,b,c)$ as  
$$(a,b,c)=(a,b_1,b_2,c_1,c_2)
=\big(\frac{1}{2},\frac{1}{4},\frac{1}{4},\frac{1}{2},\frac{1}{2}\big)
$$
and set 
\begin{equation}
  \label{eq:fixed-parameters}
  \cF_2
=\cF_2\big(\frac{1}{2},\frac{1}{4},\frac{1}{4},\frac{1}{2},\frac{1}{2}\big).
\end{equation}
This system is reducible by Fact \ref{fact:irreducible},
and the integrals
$$f_i(x)=\iint_{D_i}u(t,x)dt=\iint_{D_i}
\frac{dt_1\wedge dt_2}{\sqrt[4]{t_1^3(1-t_1)^3t_2^3(1-t_2)^3
    (1-t_1x_1-t_2x_2)^2}},
$$
in Fact \ref{fact:basis} form a fundamental system of $\cF$ around
$\dot x$ by Remark \ref{rem:reduced-case}. 
\begin{definition}
\label{def:Schwarz-map}
We define Schwarz's map $\mathbf{f}$ for $\cF_2$ by the analytic continuation of
the map
\begin{equation}
  \label{eq:Schwarz-map}
\mathbf{f}: \dot U\ni (x_1,x_2)\mapsto \tr (f_1(x),\dots,f_4(x))\in \P^3
\end{equation}  
to the space
$X=\{(x_1,x_2)\in \C^2\mid x_1(1-x_1)x_2(1-x_2)(1-x_1-x_2)\ne0 \}$.
We regard it as a multi-valued map on $X$ with 
the projective monodromy of $\cF_2$, or as 
a single-valued  map from the universal covering $\widetilde{X}$ of $X$.  
\end{definition}

By Proposition \ref{prop:a=c1=c2} and \eqref{eq:Int-rep-HGS},  
the ratio of $f_1(x)$ and $f_2(x)$ is equal
to that of a fundamental system of solutions to
the hypergeometric equation $\cF(\frac{1}{4}, \frac{1}{4},1)$ 
for an independent variable $z=\dfrac{1-x_1-x_2}{(1-x_1)(1-x_2)}$,
our map $\mathbf{f}$ can be regarded as an extension of classical Schwarz's map
for $\cF(\frac{1}{4}, \frac{1}{4},1)$.
Its image is isomorphic to the upper half space $\H$, and
its monodromy group is isomorphic to the triangle group
$\De(2,\infty,\infty)$
since the difference of characteristic exponents
of $\cF(\frac{1}{4}, \frac{1}{4},1)$ at $0,1,\infty$ are
$0$, $\frac{1}{2}$,  $0$, respectively.

A fundamental domain of $\De(2,\infty,\infty)$ in $\H$ consists of 
two copies of Schwarz's triangles with angles $\frac{\pi}{2},0,0$,
see Figure \ref{fig:FD-igusa}.
\begin{figure}[htb]
  \centering
\includegraphics[width=12cm]{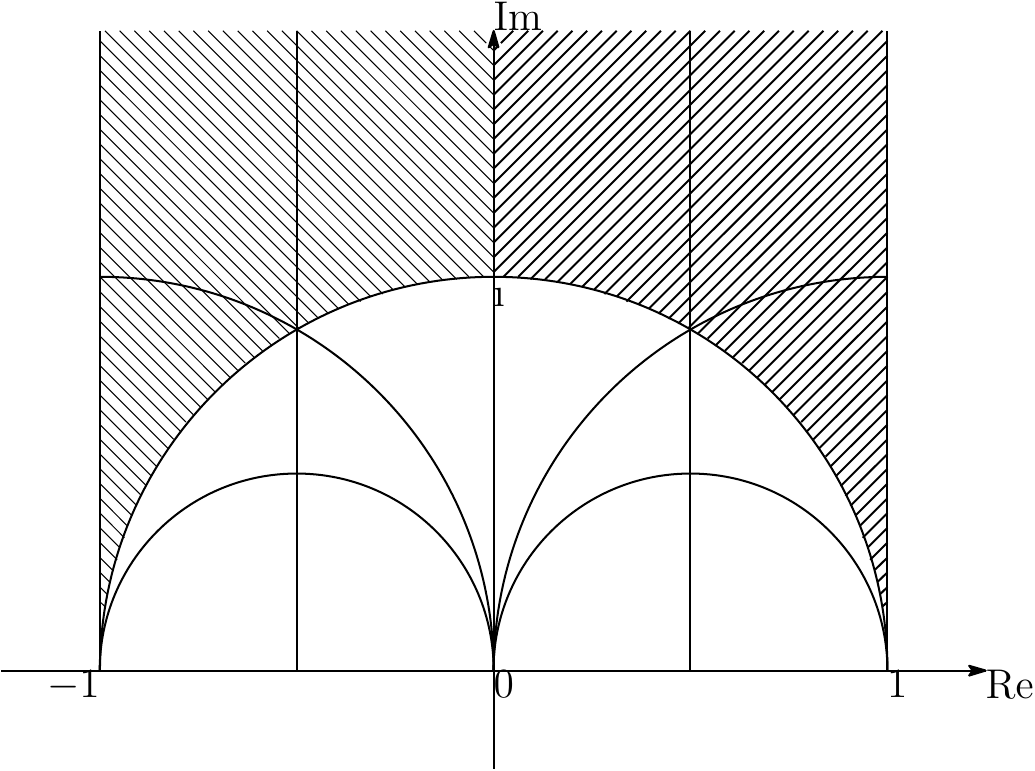}
  \caption{A fundamental domain of the triangle group $\De(2,\infty, \infty)$}
  \label{fig:FD-igusa}
\end{figure}
We identify one of its edge $\{\tau\in \H\mid \re(\tau)=-1\}$ with
$\{\tau\in \H\mid \re(\tau)=1\}$, and
$\{\tau\in \H\mid |\tau|=1, \re(\tau)\le 0\}$ with
$\{\tau\in \H\mid |\tau|=1, \re(\tau)\ge 0\}$ by
Schwarz's reflection principle.
These identifications are realized by 
linear fractional transformations 
$$
\begin{pmatrix}
  1 & 2 \\ 0 & 1 
\end{pmatrix}\cdot \tau=\tau+2,\quad 
\begin{pmatrix}
  0 & -1 \\ 1 & 0
\end{pmatrix}\cdot \tau=\frac{-1}{\tau}
.$$
Thus $\De(2,\infty,\infty)$ is isomorphic to the group generated by
$\begin{pmatrix}
  1 & 2 \\ 0 & 1 
\end{pmatrix}$ and $\begin{pmatrix}
  0 & -1 \\ 1 & 0
\end{pmatrix}$ in $PSL_2(\Z)=SL_2(\Z)/\{\pm E_2\}$,
where $E_2$ is the unit matrix of size $2$.
It is known that these two matrices generate the Igusa group 
$$\G_{1,2}=\Big\{
\begin{pmatrix}
  g_{11} & g_{12} \\ g_{21} & g_{22}
\end{pmatrix}\in SL_2(\Z)\;\Big|\;   
\g_{11}g_{12},\g_{21}g_{22} \equiv 0 \bmod 2 
\Big\}
$$
satisfying
$$[SL_2(\Z):\G_{1,2}]=3,\quad [\G_{1,2}:\G(2)]=2,$$
where $\G(2)$ is the principally congruence subgroup of
$SL_2(\Z)$ of level $2$.

\begin{remark}
  \label{rem:K3}
  The double integrals
  $f_i(x)$
can be regarded as integrals of a holomorphic $2$-form
$\imath^*\big(\dfrac{(1-t_1x_1-t_2x_2)dt_1\wedge dt_2}{t_3^3}\big)$ 
on a $K3$-surface defined by a minimal non-singular model of 
an algebraic surface
$$\{(t_1,t_2,t_3)\in \C^3\mid t_3^4=t_1(1-t_1)t_2(1-t_2)(1-t_1x_1-t_2x_2)^2\},
$$
where $\imath$ is its resolution. This $K3$-surface admits an elliptic 
fibration with four singular fibers of type $III$, $III$, 
$III^*$, $III^*$, and its Picard number is $18$ for generic $(x_1,x_2)\in X$.
\end{remark}

\section{Curves $C_z$ of genus $3$}
\label{sec:curve}
By proposition \ref{prop:a=c1=c2}, the double integrals $f_i(x)$ are
expressed as
\begin{align}
\nonumber  
  f_1(x)&=\frac{B(1/4,1/4)}{\sqrt[4]{(1-x_1)(1-x_2)}}
             \int_{-\infty}^0(-v)^{-3/4}(1-v)^{-1/4}(1-vz)^{-1/4}dv,\\
\nonumber
f_2(x)&=\frac{\i B(1/4,1/2)}{\sqrt[4]{(1-x_1)(1-x_2)}}
             \int_{1/z}^\infty v^{-3/4}(v-1)^{-1/4}(vz-1)^{-1/4}dv,\\
 \label{eq:periods}  \\
\nonumber    f_3(x)&=\frac{\ex(3/8)
             B(1/4,1/4)}{\sqrt[4]{(1-x_1)(1-x_2)}}
             \int_{0}^{1-x_1} v^{-3/4}(1-v)^{-1/4}(1-vz)^{-1/4}dv,\\
\nonumber    f_4(x)&
=\frac{\ex(3/8)
             B(1/4,1/4)}{\sqrt[4]{(1-x_1)(1-x_2)}}
             \int_{0}^{1-x_2} v^{-3/4}(1-v)^{-1/4}(1-vz)^{-1/4}dv.
  \end{align}  
  Here note that
  $$B(\frac{1}{4},\frac{1}{2})=\frac{\G(1/4)\G(1/2)}{\G(3/4)}=
  \G(1/4)\sqrt{\pi} \frac{\G(1/4)\sin(\pi/4)}{\pi}=
  \frac{1}{\sqrt{2}}  B(\frac{1}{4},\frac{1}{4}),
  $$
  and recall that $z=\dfrac{1-x_1-x_2}{(1-x_1)(1-x_2)}$.
We introduce a family of curves parametrized by $z\in \C-\{0,1\}$ 
so that these integrals can be regarded as path integrals
of holomorphic $1$-forms on them.
We consider an algebraic curve 
\begin{equation}
\label{eq:curve}
C_z=\{(v,w)\in \C^2\mid w^4=v^3(1-v)(1-vz)\}
\end{equation}
for any $z\in \C-\{0,1\}$,
and define a compact Riemann surface as its non-singular model, 
which is also denoted by $C_z$.  
Through a natural projection
$$\pr:C_z\ni (v,w)\mapsto v\in \P^1,$$
we regard $C_z$ as a quadruple covering of $\P^1$ with four ramification
points 
$$P_0=(0,0),\quad P_1=(1,0),\quad P_{1/z}=(\frac{1}{z},0),\quad
P_\infty=(\infty,\infty).$$
Since the ramification indices of these points are four,
the Euler number $\chi(C_z)$ of the curve $C_z$ is
$$4\times \chi(\P^1)-4\times (4-1)=-4,$$
which means that this curve is of genus $3$.

The group of the covering transformation of $\pr$ is
the cyclic group of order $4$ generated by the automorphism 
$$\sigma:C_z\ni (v,w)\mapsto (v,\i w)\in C_z.$$
It acts on the vector space $H^0(C_z,\W)$ of homomorphic $1$-forms $\eta$ 
on $C_z$ as the pull-back $\sigma^*(\eta)$.
We can easily obtain the following. 
\begin{proposition}
  \label{prop:sigma-action}
The eigenvalues of $\sigma$ acting on $H^0(C_z,\W)$ is 
$\pm\i$ and $-1$.  
Its eigenspaces of eigenvalues $-\i$, $\i$, $-1$ are spanned by
\begin{align*}
  \eta_1&=\dfrac{dv}{w}=\dfrac{dv}{\sqrt[4]{v^3(1-v)(1-vz)}},\\
  \eta_2&=\dfrac{v^2dv}{w^3}=\dfrac{dv}{\sqrt[4]{v(1-v)^3(1-vz)^3}},\\
  \eta_3&=\dfrac{vdv}{w^2}=\dfrac{dv}{\sqrt{v(1-v)(1-vz)}},
\end{align*}        
respectively.
Their zero divisors  are
\begin{equation}
\label{eq:zero-div}
[\eta_1]=2P_1+2P_{1/z},\quad [\eta_2]=2P_0+2P_\infty,\quad
[\eta_3]=P_0+P_1+P_{1/z}+P_\infty.
\end{equation}
\end{proposition}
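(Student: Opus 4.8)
The plan is to treat each of $\eta_1,\eta_2,\eta_3$ as a meromorphic $1$-form of the shape $v^a\,dv/w^b$ and to proceed in three steps: first identify its $\sigma$-eigenvalue, then compute its divisor, and finally assemble these forms into a basis of $H^0(C_z,\W)$. For the eigenvalues, since $\sigma$ fixes $v$ and sends $w\mapsto \i w$, I have $\sigma^*(v^a\,dv/w^b)=\i^{-b}\,v^a\,dv/w^b$, so every such form is automatically a $\sigma$-eigenform with eigenvalue $\i^{-b}$. Reading off $b=1,3,2$ gives the eigenvalues $-\i,\i,-1$ for $\eta_1,\eta_2,\eta_3$ respectively; this step is immediate.

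The substance lies in the divisor computation, which I would carry out pointwise. Away from $w=0$ and $v=\infty$ the projection $\pr$ is unramified, $v$ is a local coordinate, $dv$ is nonvanishing, and $v,w$ are units, so each $\eta_i$ is holomorphic and nowhere zero there; thus the forms can have zeros or poles only at the four totally ramified points $P_0,P_1,P_{1/z},P_\infty$. At each such point I would fix a uniformizer $s$ and record the orders of $v$ (or $v-1$, $v-1/z$, $1/v$), of $w$, and of $dv$. The governing facts are: total ramification of index $4$ gives $\ord_{P_0}(v)=\ord_{P_1}(v-1)=\ord_{P_{1/z}}(v-1/z)=\ord_{P_\infty}(1/v)=4$; the relation $w^4=v^3(1-v)(1-vz)$ then yields $\ord(w)$ via $4\,\ord(w)=3\,\ord(v)+\ord(1-v)+\ord(1-vz)$ at each point, giving $\ord_{P_0}(w)=3$, $\ord_{P_1}(w)=\ord_{P_{1/z}}(w)=1$, $\ord_{P_\infty}(w)=-5$; and differentiating the local expansion gives $\ord(dv)=3$ at $P_0,P_1,P_{1/z}$ and $\ord_{P_\infty}(dv)=-5$. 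Substituting into $\ord(v^a\,dv/w^b)=a\,\ord(v)+\ord(dv)-b\,\ord(w)$ produces exactly the divisors $[\eta_1]=2P_1+2P_{1/z}$, $[\eta_2]=2P_0+2P_\infty$, $[\eta_3]=P_0+P_1+P_{1/z}+P_\infty$ asserted in \eqref{eq:zero-div}; as a consistency check each has degree $4=2g-2$.

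The main care is needed at $P_0$ and $P_\infty$, where $v$ itself vanishes or blows up and the relation $w^4=v^3(\cdots)$ carries the coprime exponent pair $(4,3)$. I would emphasize that this coprimality is precisely what forces total ramification at $v=0$ and $v=\infty$ and lets me read off $\ord(v)$ and $\ord(w)$ cleanly; only afterwards does $\ord(dv)$ follow from a uniformizing expansion $v=s^4\cdot(\text{unit})$ (respectively $1/v=s^4\cdot(\text{unit})$). The remaining points $P_1,P_{1/z}$ are simpler, since there $w$ itself serves as a uniformizer and $v-1$ (respectively $v-1/z$) vanishes to order $4$.

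To conclude, since each divisor is effective, every $\eta_i$ is a nonzero holomorphic $1$-form; having pairwise distinct $\sigma$-eigenvalues they are linearly independent, and as $\dim_\C H^0(C_z,\W)=g=3$ they form a basis. Hence the listed spans are the full eigenspaces, each of dimension $1$, and no further eigenvalue occurs. In particular the eigenvalue $+1$ is absent, which matches the fact that a $\sigma$-invariant holomorphic form would descend to the quotient $C_z/\langle\sigma\rangle\simeq\P^1$, a curve carrying no nonzero holomorphic $1$-forms.
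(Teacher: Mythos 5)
Your proof is correct and is exactly the routine verification the paper has in mind when it states this proposition with the remark ``We can easily obtain the following'' and omits the argument: eigenvalues from $\sigma^*w=\i w$, local order counts at the four totally ramified points (using coprimality of the exponents and $4\,\ord(w)=3\,\ord(v)+\ord(1-v)+\ord(1-vz)$), and the dimension count $g=3$ to conclude the three eigenforms span $H^0(C_z,\W)$. All the stated orders and divisors check out, and the degree check $\deg[\eta_i]=4=2g-2$ is a nice confirmation.
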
  

Let 
\begin{equation}
\label{eq:(-1)-sp}
H^0_{-\sigma^2}(C_z,\W)=\{\eta \in H^0(C_z,\W)\mid (\sigma^2)^*(\eta)
=-\eta\}
\end{equation}
be the $(-1)$-eigenspace of the action of the involution 
$\sigma^2$ on $H^0(C_z,\W)$. 
Then $H^0_{-\sigma^2}(C_z,\W)$ is spanned by $\eta_1$ and $\eta_2$ by 
Proposition \ref{prop:sigma-action}.

We study the $(\pm1)$-eigenspaces of the action of 
the involution $\sigma^2$ on the first homology group $H_1(C_z,\Z)$
for a fixed element $z$ of the open interval $(0,1)(\subset X)$.
Since $\sigma^2$ has the four fixed points $P_0$, $P_1$, $P_{1/z}$, $P_\infty$,
the curve $C_z$ can be regarded as a double cover of an elliptic curve
ramified at these four points. It is shown in \cite[Chapter V]{F} that 
there exists a basis $\{A_1,A_2,A_3,B_1,B_2,B_3\}$ of $H_1(C_z,\Z)$ such that
\begin{equation}
\label{eq:sp-basis-sigma-action}
\begin{array}{cl}
A_i\cdot A_j=B_i\cdot B_j=0,&  A_i\cdot B_j=-\d_{ij},\\
\sigma^2(A_1)=A_2,& \sigma^2(B_1)=B_2,\\
\sigma^2(A_3)=-A_3,& \sigma^2(B_3)=-B_3,
\end{array}
\end{equation}
where $\cdot$ is the intersection form on $H_1(C_z,\Z)$, and $\d_{ij}$ is
Kronecker's symbol.
In fact, such basis is given as follows.
\begin{proposition}
\label{prop:symplectic-basis}
Let cycles $A_1$,$A_2$,$A_3$,$B_1$,$B_2$,$B_3$ be given by 
\begin{equation}
\label{eq:sp-basis-expression}
\begin{array}{lll}
  A_1= (1-\sigma)\cdot(I_{0,1}+I_{1,1/z}), 
  &A_2= \sigma^2(A_1),
  &A_3= (1+\sigma)(A_1),\\
  B_1= (1-\sigma)\cdot I_{0,1},
  &B_2= \sigma^2(B_1),
  &B_3= (\sigma+\sigma^2)(B_1),
\end{array}
\end{equation}
where $I_{i,j}$ is a path from $P_i$ to $P_j$ in the set
$R(C_z)=\{P_i\mid i=0,1,1/z,\infty\}$ of ramification points of $\sigma$ 
with
$$-\frac{\pi}{2} <\arg(v),\arg(1-v),\arg(1-vz)\le \frac{3\pi}{2},$$
see also Figure \ref{fig:homology-basis}.  
Then they satisfy (\ref{eq:sp-basis-sigma-action}). 
\begin{figure}[htb]
\includegraphics[width=12cm]{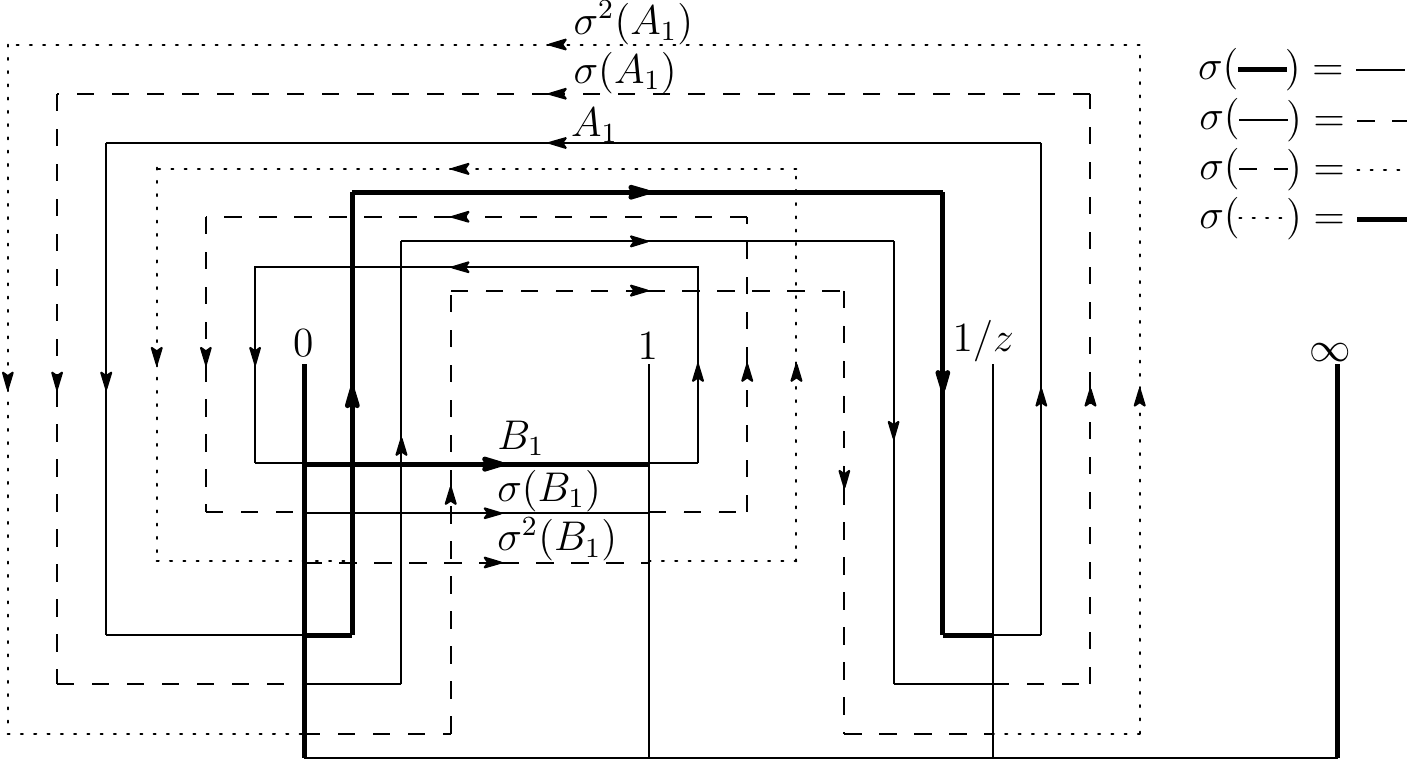}
\caption{A basis of $H_1(C_z,\Z)$}
\label{fig:homology-basis}  
\end{figure}
\end{proposition}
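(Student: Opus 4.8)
The plan is to split \eqref{eq:sp-basis-sigma-action} into its two halves: the equivariance relations for $\sigma^2$, which are essentially formal, and the intersection relations, which carry all the geometry. First I note that all manipulations with $(1-\sigma)$, $(1+\sigma)$, $\sigma^2$ applied to the $1$-chains $I_{i,j}$ are legitimate on homology because $\sigma$ fixes every point of $R(C_z)$, so $(1-\sigma)$ sends any chain with boundary supported on $R(C_z)$ to a genuine cycle. Granting this, $\sigma^2(A_1)=A_2$ and $\sigma^2(B_1)=B_2$ hold by definition. Writing $\gamma=I_{0,1}+I_{1,1/z}$, one has $A_3=(1+\sigma)(1-\sigma)\gamma=(1-\sigma^2)\gamma$, and since $\sigma$ has order $4$ on $C_z$ we get $\sigma^2(A_3)=(\sigma^2-\sigma^4)\gamma=(\sigma^2-1)\gamma=-A_3$; likewise $B_3=(\sigma+\sigma^2)(1-\sigma)I_{0,1}=(\sigma-\sigma^3)I_{0,1}$, so $\sigma^2(B_3)=(\sigma^3-\sigma)I_{0,1}=-B_3$. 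This disposes of the last four relations.

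For the intersection numbers I would set $c=(1-\sigma)I_{0,1}=B_1$ and $d=(1-\sigma)I_{1,1/z}$, so that $A_1=c+d$ and every cycle $A_i,B_j$ is a $\Z[\sigma]$-combination of $c$ and $d$. Because $\sigma$ is holomorphic it preserves the intersection form, i.e. $\sigma^i(u)\cdot\sigma^j(v)=u\cdot\sigma^{j-i}(v)$. Expanding all $36$ products bilinearly therefore reduces everything to the elementary numbers $c\cdot\sigma^kc$, $c\cdot\sigma^kd$, $d\cdot\sigma^kd$ for $k=0,1,2,3$; antisymmetry combined with $\sigma$-invariance ($c\cdot\sigma^kc=-c\cdot\sigma^{-k}c$, etc.) cuts this down further to $c\cdot\sigma c$, $d\cdot\sigma d$ and the four mixed numbers $c\cdot\sigma^kd$. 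Thus the entire statement is reduced to evaluating this short list.

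These elementary intersections I would compute locally at the ramification points, reading the arrangement of the lifted paths from Figure \ref{fig:homology-basis}. The four lifts $\sigma^jI_{0,1}$ are arcs joining $P_0$ to $P_1$, and near each ramification point $\sigma$ acts on a local coordinate by a primitive fourth root of unity; from $w^4=v^3(1-v)(1-vz)$ one reads off (via the exponents $3\equiv-1$ at $P_0$ and $1$ at $P_1$ modulo $4$) that this rotation is by $-\i$ at $P_0$ but by $\i$ at $P_1$. This pins down the cyclic order of the emanating arcs, and the intersection of two cycles lying over one base segment is the sum of the local contributions at its two endpoints. Since the rotation numbers at the two ends of $[0,1]$ are opposite, those contributions cancel, giving $c\cdot\sigma c=0$, and similarly $d\cdot\sigma d=0$ while $c\cdot\sigma^2c=d\cdot\sigma^2d=0$ (there the relevant sheets are disjoint over the open segment). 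For the mixed numbers the two cycles lie over the adjacent segments $[0,1]$ and $[1,1/z]$ and meet only over $P_1$, so each $c\cdot\sigma^kd$ is a single local contribution at $P_1$, equal to $0$ or $\pm1$ according to how the lifts interlace there. Substituting these values into the bilinear expansions yields $A_i\cdot A_j=B_i\cdot B_j=0$ and the normalization $A_i\cdot B_j=-\d_{ij}$; in particular the Gram matrix is the standard symplectic matrix, hence unimodular, so the six cycles form a $\Z$-basis of the unimodular lattice $H_1(C_z,\Z)$.

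The main obstacle is precisely the last computation of the elementary intersection numbers. At every ramification point all four sheets come together, and the cycles being paired genuinely overlap along shared arcs, so a naive transverse count is meaningless; one must perturb the arcs and track orientations through the local $\sigma$-rotation. The crucial structural input that makes the relations come out clean — the vanishing of $c\cdot\sigma c$ and $d\cdot\sigma d$, and the correct signs in $A_i\cdot B_j=-\d_{ij}$ — is the opposite chirality of $\sigma$ at the two endpoints of each segment. Everything outside this local analysis is formal bookkeeping with the group ring $\Z[\sigma]$ and the $\sigma$-invariance of the intersection form.
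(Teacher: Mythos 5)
Your overall strategy---reducing everything by $\Z[\sigma]$-bilinearity and the $\sigma$-invariance of the intersection form to the short list of elementary numbers $c\cdot\sigma^k c$, $d\cdot\sigma^k d$, $c\cdot\sigma^k d$ with $c=(1-\sigma)I_{0,1}$, $d=(1-\sigma)I_{1,1/z}$, and then evaluating those locally at the ramification points---is sound, and it is essentially the verification the paper delegates to Figure \ref{fig:homology-basis} (no written proof is given there). The formal half of your argument ($(1-\sigma)I_{i,j}$ being genuine cycles, $\sigma^2(A_3)=-A_3$, $\sigma^2(B_3)=-B_3$) is correct.

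There is, however, a concrete error at the one point where you commit to numerical values: the claim that $d\cdot\sigma d=0$ ``similarly'' to $c\cdot\sigma c=0$. It is false, and it contradicts your own mechanism. Your (correct) reason for $c\cdot\sigma c=0$ is that the local rotations of $\sigma$ at the two endpoints $P_0$ and $P_1$ of $I_{0,1}$ are by $-\i$ and $+\i$ (exponents $3$ and $1$ in $w^4=v^3(1-v)(1-vz)$), so the two local contributions cancel. But the endpoints $P_1$ and $P_{1/z}$ of $I_{1,1/z}$ \emph{both} carry exponent $1$, hence the same rotation $+\i$; the two contributions add, and $d\cdot\sigma d=\pm1$. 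One can see without any local analysis that the proposition forces $d\cdot\sigma d=1$: writing $s_1=c\cdot\sigma c$, $t_1=d\cdot\sigma d$, $e_k=c\cdot\sigma^k d$, your bilinear expansion gives $B_1\cdot B_3=s_1$, $A_1\cdot B_1=-e_0$, $A_1\cdot B_2=-e_2$, $A_1\cdot B_3=s_1-e_2-e_3$, $A_2\cdot B_3=-s_1-e_1-e_0$, and $A_1\cdot A_3=s_1+t_1+e_1-e_3$; imposing \eqref{eq:sp-basis-sigma-action} successively yields $s_1=0$, $e_0=1$, $e_2=0$, $e_3=0$, $e_1=-1$, and hence $t_1=1$. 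With your value $t_1=0$ you would get $A_1\cdot A_3=-1\neq0$, so the verification as written fails. A secondary gap: you never actually extract the mixed numbers $e_k$ from the figure, merely asserting they are ``$0$ or $\pm1$''; as the computation above shows, the whole proposition hinges on the specific values $e_0=1$, $e_1=-1$, $e_2=e_3=0$, $t_1=1$, so these local counts are exactly the content that cannot be waved through.
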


\begin{proposition}
\label{prop:eigenspaces-homology}
Let $H_1^{\sigma^2}(C_z,\Z)$ and $H_1^{-\sigma^2}(C_z,\Z)$ be the eigenspaces 
of the involution $\sigma^2:H_1(C_z,\Z)\to H_1(C_z,\Z)$ 
of eigenvalues $1$ and $-1$, respectively. 
Then these spaces are orthogonal to each other with respect to 
the intersection form. 
The space $H_1^{\sigma^2}(C_z,\Z)$ is generated by 
$$A_1^+= A_1+A_2, \quad B_1^+=B_1+B_2,$$ 
and  $H_1^{-\sigma^2}(C_z,\Z)$  by 
$$A^-_1= A_1-A_2,\quad A^-_2=A_3,\quad 
B_1^-=B_1-B_2,\quad B^-_2=B_3.$$
\end{proposition}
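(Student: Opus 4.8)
The plan is to make the action of $\sigma^2$ on the symplectic basis completely explicit, solve the eigenvalue equations over $\Z$ to read off the generators, and then deduce orthogonality from the invariance of the intersection form. First I would record the full action of $\sigma^2$ on all six generators. Since $\sigma$ has order four, $\sigma^2$ is an involution on $H_1(C_z,\Z)$, so applying it a second time to the relations in \eqref{eq:sp-basis-sigma-action} yields $\sigma^2(A_2)=\sigma^4(A_1)=A_1$ and $\sigma^2(B_2)=B_1$, together with the already given $\sigma^2(A_3)=-A_3$ and $\sigma^2(B_3)=-B_3$. In other words $\sigma^2$ swaps $A_1\leftrightarrow A_2$ and $B_1\leftrightarrow B_2$ while negating $A_3$ and $B_3$.

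Next I would impose the two eigenvalue conditions on a general integral class $x=a_1A_1+a_2A_2+a_3A_3+b_1B_1+b_2B_2+b_3B_3$. For the $(+1)$-eigenspace the equation $\sigma^2(x)=x$ forces $a_1=a_2$, $b_1=b_2$ and $a_3=b_3=0$, so $x$ runs over the integral combinations of $A_1+A_2$ and $B_1+B_2$; for the $(-1)$-eigenspace the equation $\sigma^2(x)=-x$ forces $a_1=-a_2$ and $b_1=-b_2$ while leaving $a_3$ and $b_3$ free, so $x$ runs over the integral combinations of $A_1-A_2$, $A_3$, $B_1-B_2$ and $B_3$. Solving these conditions directly over $\Z$ (rather than over $\Q$) is what guarantees that the listed elements generate the two eigen-sublattices exactly, with no finite-index discrepancy.

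Finally, for the orthogonality I would use that $\sigma^2$ is induced by an automorphism of $C_z$ and hence preserves the intersection form: for $u$ in the $(+1)$-eigenspace and $v$ in the $(-1)$-eigenspace this gives $u\cdot v=(\sigma^2 u)\cdot(\sigma^2 v)=u\cdot(-v)=-(u\cdot v)$, whence $u\cdot v=0$. Alternatively one may verify the eight pairings among the proposed generators by hand using $A_i\cdot B_j=-\d_{ij}$ and the vanishing of $A_i\cdot A_j$ and $B_i\cdot B_j$, which is a short computation. I expect no serious obstacle here; the only point deserving attention is the integral structure of the eigenspaces, and this is already handled by the explicit $\Z$-solution above.
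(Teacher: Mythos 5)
Your proof is correct. The orthogonality argument is identical to the paper's: both use invariance of the intersection form under the automorphism $\sigma^2$ to get $u\cdot v=-(u\cdot v)=0$ for classes in opposite eigenspaces. Where you diverge is in establishing that the listed elements generate the eigen-sublattices over $\Z$. You make the full action of $\sigma^2$ on the basis explicit (noting $\sigma^2(A_2)=A_1$, $\sigma^2(B_2)=B_1$ since $\sigma^2$ is an involution) and then solve $\sigma^2(x)=\pm x$ by comparing coefficients in the basis $\{A_1,\dots,B_3\}$; this immediately yields all the constraints ($a_1=\pm a_2$, $b_1=\pm b_2$, and $a_3=b_3=0$ in the $(+1)$ case) with no finite-index subtlety, since the conditions are unimodular. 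The paper instead writes a general element $A^-$ of the $(-1)$-eigenspace as $n_1A_1+\dots+n_6B_3$ and uses the just-proved orthogonality to impose $(A_1+A_2)\cdot A^-=(B_1+B_2)\cdot A^-=0$, extracting $n_1+n_2=0$ and $n_4+n_5=0$ from the intersection pairing. Both routes are elementary and complete; yours is slightly more self-contained in that the generation step does not depend on the orthogonality statement, while the paper's reuses the pairing machinery that recurs throughout Section 4 (e.g.\ in Proposition \ref{prop:Lambda}). No gap either way.
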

\begin{proof}
Let $A^+$ and $A^-$ be any elements of $H_1^{\sigma^2}(C_z,\Z)$ and 
$H_1^{-\sigma^2}(C_z,\Z)$, respectively. Since the intersection form is 
preserved by any automorphism of $C_z$, we have 
$$A^+\cdot A^-=\sigma^2(A^+)\cdot \sigma^2(A^-)=A^+\cdot (-A^-)=
-A^+\cdot A^-,$$
which means $A^+\cdot A^-=0$. By Proposition \ref{prop:symplectic-basis}, 
it is easy to see that 
$$A^+_1, B^+_1\in H_1^{\sigma^2}(C_z,\Z),\quad 
A^-_1, A^-_2, B^-_1, B^-_2\in H_1^{-\sigma^2}(C_z,\Z).$$
We show that any element $A^-\in H_1^{-\sigma^2}(C_z,\Z)$ 
can be expressed as a linear combination 
of $A^-_1,A^-_2,B^-_1,B^-_2$ over $\Z$.
Since $H_1^{-\sigma^2}(C_z,\Z)\subset H_1(C_z,\Z)$, $A^-$ can be expressed as
$$n_1A_1+n_2A_2+n_3A_3+n_4B_1+n_5B_2+n_6B_3\quad (n_1,\dots,n_6\in \Z).$$
By the orthogonality of the spaces $H_1^{\sigma^2}(C_z,\Z)$ and 
$H_1^{-\sigma^2}(C_z,\Z)$, we have $(A_1+A_2)\cdot A^-=(B_1+B_2)\cdot A^-=0$, 
which yield 
$$-n_4-n_5=n_1+n_2=0.$$
Thus $A^-$ admits the expression 
$$n_1A_1-n_1A_2+n_3A_3+n_4B_1-n_4B_2+n_6B_3=
n_1A^-_1+n_3A^-_2+n_4B^-_1+n_6B^-_2
$$
with coefficients $n_1,n_3,n_4,n_6\in \Z$. Similarly, we can show 
that $H_1^{\sigma^2}(C_z,\Z)$ is generated by $A^+_1=A_1+A_2$, $B^+_1=B_1+B_2$.
\end{proof}

Note that the intersection matrix of the basis 
$A^-_1,A^-_2,B^-_1,B^-_2$ of $H_1^{-\sigma^2}(C_z,\Z)$ is 
\begin{equation}
\label{eq:Prym-Lattice}
\begin{pmatrix} O & -E_{2,1}\\
 E_{2,1} & O
\end{pmatrix},\quad 
O=\begin{pmatrix} 0 & 0 \\ 0 & 0 \end{pmatrix},\ 
E_{2,1}=\begin{pmatrix} 2 & 0 \\ 0 & 1 \end{pmatrix}.
\end{equation}
We introduce a principal subgroup $\L$ of $H_1^{-\sigma^2}(C_z,\Z)$ of index $2$.
\begin{proposition}
\label{prop:Lambda}
Let $\L$ be a subgroup of $H_1(C_z,\Z)$ generated by 
four cycles 
\begin{equation}
\label{eq:basis-Lambda}
\a'_1=A_1-\sigma^2(A_1),\quad \a'_2=\sigma(\a'_1),\quad 
\b_1=B_1+\sigma(B_1),\quad \b_2=\sigma(\b_1).
\end{equation}
Then the intersection matrix of $\a'_1$,$\a'_2$,$\b_1$,$\b_2$ is 
$$
2J_4=2\begin{pmatrix} O & -E_2\\
 E_2 & O
\end{pmatrix},\quad E_2=\begin{pmatrix}
 1 & 0 \\ 0 & 1  
\end{pmatrix}.
$$
We have 
$$(1-\sigma^2)\cdot H_1(C_z,\Z) \subset \L
,
 \quad [H_1^{-\sigma^2}(C_z,\Z):\L]=[\L:(1-\sigma^2)\cdot H_1(C_z,\Z)]=2.
$$
\begin{figure}[htb]
\includegraphics[width=12cm]{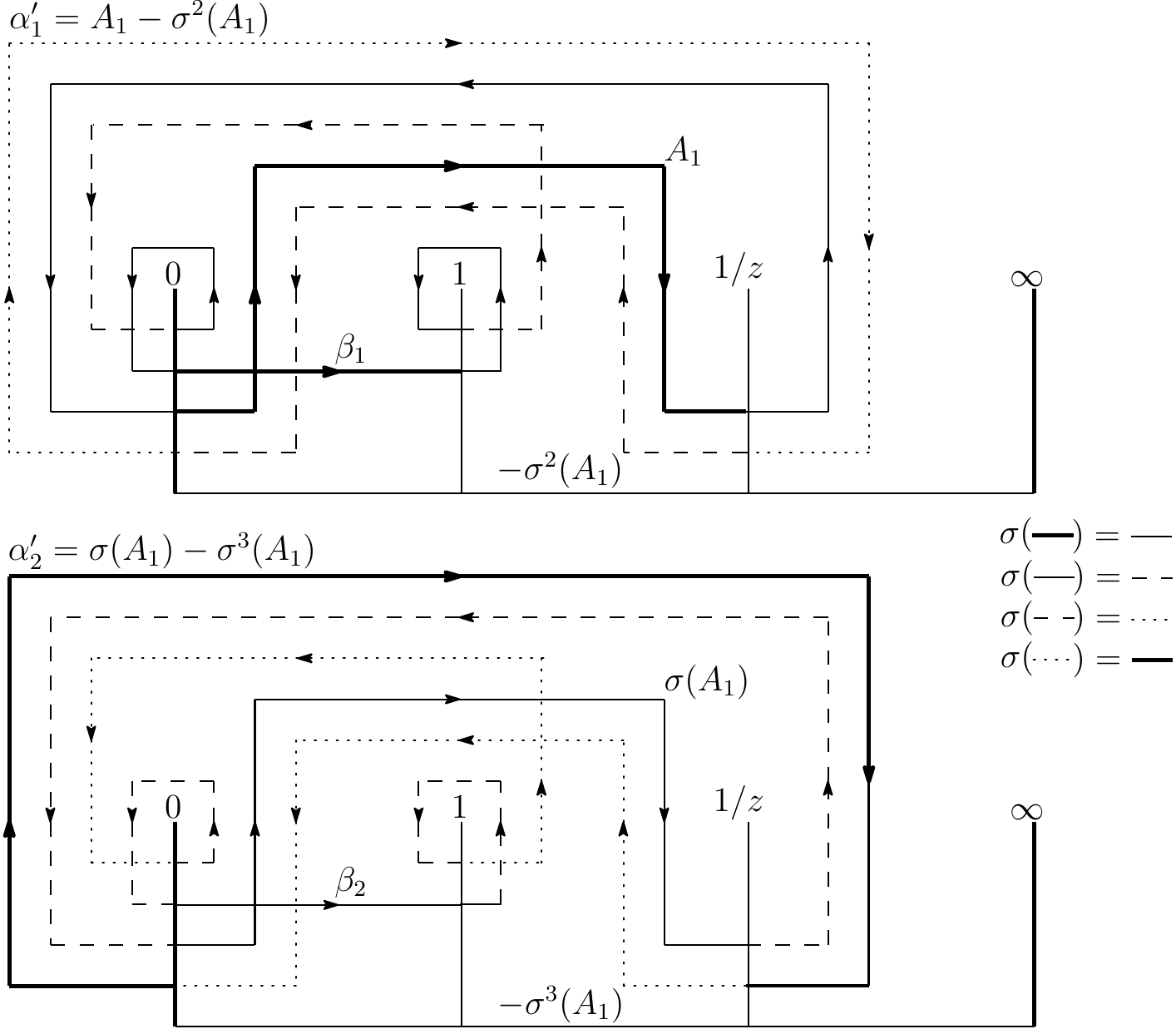}
\caption{Generators of $\L$}
\label{fig:principal-lattice}  
\end{figure}
\end{proposition}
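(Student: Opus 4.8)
The plan is to reduce all three assertions to elementary linear algebra over $\Z$ inside the lattice $H_1^{-\sigma^2}(C_z,\Z)$, using the basis $A_1^-,A_2^-,B_1^-,B_2^-$ furnished by Proposition \ref{prop:eigenspaces-homology}. The first step is to rewrite the four generators of $\L$ in \eqref{eq:basis-Lambda} in this basis. The identity $\a'_1=A_1-\sigma^2(A_1)=A_1-A_2=A_1^-$ is immediate from $A_2=\sigma^2(A_1)$. For the other three I would go back to the explicit path expressions \eqref{eq:sp-basis-expression}, writing $A_1=(1-\sigma)a$ with $a=I_{0,1}+I_{1,1/z}$ and $B_1=(1-\sigma)I_{0,1}$, and use that $\sigma$ acts on chains commuting with $1\pm\sigma$ and fixes the ramification points $P_0,P_1,P_{1/z},P_\infty$ (so $\sigma^4=\mathrm{id}$). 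Short computations then give $\a'_2=\sigma(A_1^-)=-A_1^-+2A_2^-$, $\b_1=B_1+\sigma(B_1)=(1-\sigma^2)I_{0,1}=B_1^-+B_2^-$, and $\b_2=\sigma(\b_1)=(\sigma+\sigma^2)B_1=B_3=B_2^-$, the last being exactly the definition of $B_3$.

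With these four expressions everything follows. For the intersection matrix I would expand the pairings $\a'_i\cdot\b_j$ directly via the relations \eqref{eq:sp-basis-sigma-action} (equivalently, transform the Gram matrix \eqref{eq:Prym-Lattice} by the change-of-basis matrix), which yields $2J_4$. For the two index statements I would first apply $1-\sigma^2$ to the six generators of $H_1(C_z,\Z)$, getting $(1-\sigma^2)A_1=A_1^-$, $(1-\sigma^2)A_2=-A_1^-$, $(1-\sigma^2)A_3=2A_2^-$, $(1-\sigma^2)B_1=B_1^-$, $(1-\sigma^2)B_2=-B_1^-$, $(1-\sigma^2)B_3=2B_2^-$, so that $(1-\sigma^2)H_1(C_z,\Z)=\langle A_1^-,2A_2^-,B_1^-,2B_2^-\rangle$, of index $4$ in $H_1^{-\sigma^2}(C_z,\Z)$. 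The containment $(1-\sigma^2)H_1(C_z,\Z)\subset\L$ is then witnessed by $A_1^-=\a'_1$, $2A_2^-=\a'_1+\a'_2$, $B_1^-=\b_1-\b_2$, $2B_2^-=2\b_2$. Since the change-of-basis matrix from $A_1^-,A_2^-,B_1^-,B_2^-$ to $\a'_1,\a'_2,\b_1,\b_2$ is block-diagonal with determinant $2$, we get $[H_1^{-\sigma^2}(C_z,\Z):\L]=2$, and multiplicativity of the index gives $[\L:(1-\sigma^2)H_1(C_z,\Z)]=4/2=2$.

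The one place demanding care is the bookkeeping for the $\sigma$-action: the chains $\sigma^k a$ and $\sigma^k I_{0,1}$ are not individually cycles, so I must confirm that every combination appearing above is an exact integral identity of chains (each has boundary-coefficient sum zero) and that no spurious $A$- or $B$-component sneaks into the homology class. I would settle this once and for all by observing that the integral combinations of $a,\sigma a,\sigma^2a,\sigma^3a$ with coefficient sum zero form a rank-$3$ lattice generated by $A_1,A_2,A_3$, and similarly the analogous combinations of the $\sigma^kI_{0,1}$ are generated by $B_1,B_2,B_3$; granted this, all the identities above are literal chain equalities and the rest of the argument is purely formal.
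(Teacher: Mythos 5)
Your proposal is correct and follows essentially the same route as the paper: both express $\a'_1,\a'_2,\b_1,\b_2$ in the basis $A_1^-,A_2^-,B_1^-,B_2^-$ of $H_1^{-\sigma^2}(C_z,\Z)$ (obtaining the identical formulas $\a'_1=A_1^-$, $\a'_2=-A_1^-+2A_2^-$, $\b_1=B_1^-+B_2^-$, $\b_2=B_2^-$), read off the intersection matrix from \eqref{eq:Prym-Lattice}, and deduce the two index statements by elementary lattice computations. The only cosmetic difference is bookkeeping: the paper gets $[H_1^{-\sigma^2}(C_z,\Z):\L]=2$ by comparing discriminants and $[\L:(1-\sigma^2)H_1(C_z,\Z)]=2$ by exhibiting generators, whereas you use the change-of-basis determinant and index multiplicativity via $[H_1^{-\sigma^2}(C_z,\Z):(1-\sigma^2)H_1(C_z,\Z)]=4$; your closing remark about chain-level versus homology-level identities is a valid (if slightly overcautious) justification of the same computations the paper performs implicitly.
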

\begin{proof}
Note that 
\begin{align*}
\a'_1&=A^-_1=(1-\sigma^2)(1-\sigma)(I_{0,1}+I_{1,1/z}),\\
\a'_2&=-A^-_1+2A^-_2=\sigma (1-\sigma^2)(1-\sigma)(I_{0,1}+I_{1,1/z}),\\
\b_1&=B^-_1+B^-_2=(1-\sigma^2)(I_{0,1}),\\
\b_2&=B^-_2=\sigma(1-\sigma^2)(I_{0,1}).
\end{align*}
Since they are linear combinations of $A^-_1,A^-_2,B^-_1,B^-_2\in 
H_1^{-\sigma^2}(C_z,\Z)$ of over $\Z$, we have $\L\subset H_1^{-\sigma^2}(C_z,\Z)$.
We show that 
the intersection matrix of $\a'_1$,$\a'_2$,$\b_1$,$\b_2$ becomes $2J_4$.   
By these expressions, it is clear that 
$$\a'_i\cdot \a'_j=\b_i\cdot \b_j=0\quad(1\le i,j\le 2).$$
By (\ref{eq:Prym-Lattice}), we have
\begin{align*}
\b_1\cdot \a'_1&=(B^-_1+B^-_2)\cdot A^-_1=2,\\
\b_1\cdot \a'_2&=(B^-_1+B^-_2)\cdot (-A^-_1+2A^-_2)=0,\\
\b_2\cdot \a'_1&=B^-_2\cdot A^-_1=0,\\
\b_2\cdot \a'_2&=B^-_2\cdot (-A^-_1+2A^-_2)=2.
\end{align*}
We also see these intersection numbers by 
Figure \ref{fig:principal-lattice}. 
By comparing the intersection matrix of $\a'_1$,$\a'_2$,$\b_1$,$\b_2$ with
that of the basis of $H_1^{-\sigma^2}(C_z,\Z)$,
we have $[H_1^{-\sigma^2}(C_z,\Z):\L]=2$.
Since $H_1(C_z,\Z)$ is generated by $\sigma^j\cdot A_1$, 
$\sigma^j\cdot B_1$ $(j=1,2,3)$, 
$(1-\sigma^2)H_1(C_z,\Z)$ is generated by 
$$(1-\sigma^2)\cdot A_1=\a'_1,\ (1-\sigma^2)\sigma \cdot A_1=\a'_2,\ 
(1-\sigma^2)\cdot B_1=\b_1-\b_2,\ (1-\sigma^2)\sigma \cdot B_1
=\b_1+\b_2.$$
Thus  $(1-\sigma^2)H_1(C_z,\Z)$ is a subgroup of $\L$ of index $2$.
\end{proof}

We change the basis $\a'_1,\a'_2,\b_1,\b_2$ of $\L$ into 
\begin{equation}
\label{eq:new-basis}
\a_1=\a'_1-\b_1,\quad \a_2=\a'_2-\b_2,\quad \b_1,\quad \b_2.
\end{equation}
It is easy to check that this new basis also satisfies 
\begin{equation}
\label{eq:sigma-action}
\begin{matrix}
\sigma(\a_1)=\a_2,\quad 
\sigma(\a_2)=-\a_1,\quad
\sigma(\b_1)=\b_2,\quad 
\sigma(\b_2)=-\b_1,\\[3mm]
\a_i\cdot \a_j=\b_i\cdot \b_j=0,\quad 
\a_i\cdot \b_j=-2\d_{i,j}.
\end{matrix}
\end{equation}

\begin{lemma}
\label{lem:paths}
We have 
$$\frac{1}{2}(p_1\a_1+p_2\a_2+q_1\b_1+q_2\b_2)\in H_1^{-\sigma^2}(C_z,\Z)$$
for $p_1,p_2,q_1,q_2\in \Z$ 
if and only if 
$$p_1\equiv p_2\equiv q_1\equiv q_2\bmod 2.$$
We express $(1-\sigma^2)\cdot I_{i,j}\in H_1^{-\sigma^2}(C_z,\Z)$ 
$(i,j\in \{0,1,1/z,\infty\})$ by linear combinations of $\a_1,a_2,\b_1,\b_2$ 
over $\frac{1}{2}\Z$ as 
$$
\begin{matrix}
(1-\sigma^2)\cdot I_{0,1}=\b_1, &(1-\sigma^2)\cdot I_{1,1/z}=\dfrac{1}{2}
(\a_1+\a_2-\b_1+\b_2), \\
(1-\sigma^2)\cdot I_{1/z,\infty}=-\b_2, 
&(1-\sigma^2)\cdot I_{-\infty,0}=\dfrac{1}{2}
(-\a_1-\a_2-\b_1+\b_2).
\end{matrix}
$$
\end{lemma}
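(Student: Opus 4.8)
The plan is to locate $H_1^{-\sigma^2}(C_z,\Z)$ exactly between the lattices $\L$ and $\tfrac12\L$, deduce the congruence from that, and then identify the four path classes—the first two algebraically and the last two from the explicit picture of the cycles.

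First I would note that $\a_1,\a_2,\b_1,\b_2$ is a $\Z$-basis of $\L$, since \eqref{eq:new-basis} is a unimodular change of the basis $\a'_1,\a'_2,\b_1,\b_2$ of Proposition \ref{prop:Lambda}; hence the vectors $\tfrac12(p_1\a_1+p_2\a_2+q_1\b_1+q_2\b_2)$ with $p_i,q_i\in\Z$ range over $\tfrac12\L$. Because $[H_1^{-\sigma^2}(C_z,\Z):\L]=2$, every $h\in H_1^{-\sigma^2}(C_z,\Z)$ has $2h\in\L$, so $\L\subset H_1^{-\sigma^2}(C_z,\Z)\subset\tfrac12\L$ and $H_1^{-\sigma^2}(C_z,\Z)/\L$ is an order-$2$ subgroup of $\tfrac12\L/\L\cong(\Z/2\Z)^4$. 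To exhibit its nontrivial coset I would rewrite the basis vector $A^-_2=A_3$ of Proposition \ref{prop:eigenspaces-homology} in the new basis: the identities $\a'_1=A^-_1$, $\a'_2=-A^-_1+2A^-_2$, $\b_1=B^-_1+B^-_2$, $\b_2=B^-_2$ from the proof of Proposition \ref{prop:Lambda}, combined with \eqref{eq:new-basis}, give $\a_1+\a_2+\b_1+\b_2=2A^-_2$, i.e.
\[A^-_2=\tfrac12(\a_1+\a_2+\b_1+\b_2)\in H_1^{-\sigma^2}(C_z,\Z)\setminus\L.\]
Therefore $H_1^{-\sigma^2}(C_z,\Z)=\L\sqcup(A^-_2+\L)$, and $\tfrac12(p_1\a_1+p_2\a_2+q_1\b_1+q_2\b_2)$ lies in it exactly when all $p_i,q_i$ are even (the coset $\L$) or all are odd (the coset $A^-_2+\L$); this is precisely $p_1\equiv p_2\equiv q_1\equiv q_2\bmod 2$.

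For the path classes, $(1-\sigma^2)I_{0,1}=\b_1$ is immediate, since $\b_1=(1-\sigma^2)I_{0,1}$ already appears in the proof of Proposition \ref{prop:Lambda}. For $I_{1,1/z}$ I would combine $A_1=(1-\sigma)(I_{0,1}+I_{1,1/z})$ and $B_1=(1-\sigma)I_{0,1}$ from Proposition \ref{prop:symplectic-basis} into $(1-\sigma)I_{1,1/z}=A_1-B_1$, and then apply $1+\sigma$:
\[(1-\sigma^2)I_{1,1/z}=(1+\sigma)(A_1-B_1)=A_3-\b_1=A^-_2-\b_1=\tfrac12(\a_1+\a_2-\b_1+\b_2),\]
using $A_3=(1+\sigma)A_1$, $\b_1=(1+\sigma)B_1$ and the value of $A^-_2$ found above.

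The remaining segments $I_{1/z,\infty}$ and $I_{-\infty,0}$ do not enter the definition of the symplectic basis, so they cannot be reduced purely algebraically and I would compute them geometrically. Writing $c=(1-\sigma^2)I_{i,j}=\tfrac12(p_1\a_1+p_2\a_2+q_1\b_1+q_2\b_2)\in H_1^{-\sigma^2}(C_z,\Z)$, the relations $\a_i\cdot\b_j=-2\d_{i,j}$, $\a_i\cdot\a_j=\b_i\cdot\b_j=0$ of \eqref{eq:sigma-action} invert to $q_1=c\cdot\a_1$, $q_2=c\cdot\a_2$, $p_1=-c\cdot\b_1$, $p_2=-c\cdot\b_2$. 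Thus it suffices to read the four intersection numbers of each of $(1-\sigma^2)I_{1/z,\infty}$ and $(1-\sigma^2)I_{-\infty,0}$ against $\a_1,\a_2,\b_1,\b_2$ off the explicit drawings in Figures \ref{fig:homology-basis} and \ref{fig:principal-lattice}, which gives $(1-\sigma^2)I_{1/z,\infty}=-\b_2$ and $(1-\sigma^2)I_{-\infty,0}=\tfrac12(-\a_1-\a_2-\b_1+\b_2)$. As a consistency check, the four segments concatenate to the real circle on $\P^1$, so one expects $(1-\sigma^2)(I_{0,1}+I_{1,1/z}+I_{1/z,\infty}+I_{-\infty,0})=0$, and indeed $\b_1+\tfrac12(\a_1+\a_2-\b_1+\b_2)-\b_2+\tfrac12(-\a_1-\a_2-\b_1+\b_2)=0$.

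The hard part is exactly this last geometric step. Parity together with the sum relation only shows $(1-\sigma^2)(I_{1/z,\infty}+I_{-\infty,0})=-A^-_2$ and leaves two splittings compatible with membership in $H_1^{-\sigma^2}(C_z,\Z)$; to select the correct one I must track the branch of $w=\sqrt[4]{v^3(1-v)(1-vz)}$ and the orientation along $I_{1/z,\infty}$ and $I_{-\infty,0}$ carefully enough to fix the signs of the intersection numbers. Equivalently one could exploit the automorphism $(v,w)\mapsto(1/(vz),w/(v^2z))$, which commutes with $\sigma$ and interchanges $P_0\leftrightarrow P_\infty$, $P_1\leftrightarrow P_{1/z}$, to transport the already-known classes; but pinning down its action on $H_1(C_z,\Z)$ requires the same branch bookkeeping, so genuine input from the figure seems unavoidable.
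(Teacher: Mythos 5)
Your proposal is correct and follows essentially the same route as the paper: the congruence is obtained by identifying $H_1^{-\sigma^2}(C_z,\Z)$ as the index-$2$ overlattice of $\L$ generated by $A^-_2=\tfrac12(\a_1+\a_2+\b_1+\b_2)$, and the path classes come from the definitions, intersection numbers read off the figures, and the sum relation. The only cosmetic differences are that you compute $(1-\sigma^2)I_{1,1/z}$ algebraically as $(1+\sigma)(A_1-B_1)=A_3-\b_1$ where the paper passes through a figure-based identity for $(1-\sigma^2)I_{0,1/z}$, while for $(1-\sigma^2)I_{1/z,\infty}=-\b_2$ the paper does exactly what you propose (reading its intersection numbers with $\a'_1,\a'_2,\b_1,\b_2$ from the figure) and then gets $I_{-\infty,0}$ from the sum relation, so your acknowledged reliance on the figure matches the paper's own level of detail.
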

\begin{proof}
Note that $A^-_1,A^-_2,B^-_1,B^-_2\in H_1^{-\sigma^2}(C_z,\Z)$ satisfy 
$A^-_1,B^-_1,B^-_2\in \L$, $A^-_2\notin \L$ and  
$$A^-_2=\frac{1}{2}(\a'_1+\a'_2)=
\frac{1}{2}(\a_1+\a_2+\b_1+\b_2)
.$$
Since $[H_1^{-\sigma^2}(C_z,\Z):\L]=2$, 
the first assertion of this lemma holds. 
The cycle $(1-\sigma^2)\cdot I_{0,1}$ is nothing but $\b_1$.
By Figure \ref{fig:principal-lattice}, we have   
$$\a_1'=(1-\sigma^2)\cdot I_{0,1/z}-\sigma(1-\sigma^2)\cdot I_{0,1/z},\quad 
\a_2'=(1-\sigma^2)\cdot I_{0,1/z}+\sigma(1-\sigma^2)\cdot I_{0,1/z}.
$$ 
Thus we have 
\begin{align*}
(1-\sigma^2)\cdot I_{0,1/z}&=\frac{1}{2}(\a'_1+\a_2')
=\frac{1}{2}(\a_1+\a_2+\b_1+\b_2),\\
(1-\sigma^2)\cdot I_{1,1/z}&=(1-\sigma^2)\cdot (I_{0,1/z}- I_{0,1})
=\frac{1}{2}(\a_1+\a_2-\b_1+\b_2).
\end{align*}
Since the cycle $(1-\sigma^2)\cdot I_{1/z,\infty}$ intersects $\a'_2$ with 
the intersection number 
$$\a'_2\cdot ((1-\sigma^2)\cdot I_{1/z,\infty})=2,$$ 
and it does not intersect $\a'_1,\b'_1,\b'_2$, this is equal to 
$-\b'_2=-\b_2$.
Since 
$$(1-\sigma^2)\cdot I_{0,1}+(1-\sigma^2)\cdot I_{1,1/z}
+(1-\sigma^2)\cdot I_{1/z,\infty}+(1-\sigma^2)\cdot I_{-\infty,0}=0$$ 
as elements of $H_0^{-\sigma^2}(C_z,\Z)$, the expression 
$(1-\sigma^2)\cdot I_{-\infty,0}$ is obtained.
\end{proof}

\begin{lemma}
\label{lem:Lambda-index2}
The linear combination 
$p_1\a_1+p_2\a_2+q_1\b_1+q_2\b_2$ $(p_1,p_2,q_1,q_2\in \Z)$ 
belongs to $(1-\sigma^2)H_1(C_z,\Z)$ if and only if $p_1+p_2+q_1+q_2\in 2\Z$.
\end{lemma}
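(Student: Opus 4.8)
The plan is to recognize the statement as an index computation inside $\L$. The subgroup $(1-\sigma^2)H_1(C_z,\Z)$ sits inside $\L$ with index $2$ by Proposition \ref{prop:Lambda}, and the set of $p_1\a_1+p_2\a_2+q_1\b_1+q_2\b_2$ with $p_1+p_2+q_1+q_2\in 2\Z$ is the kernel of the parity homomorphism $\phi\colon \L\to \Z/2\Z$ sending each of $\a_1,\a_2,\b_1,\b_2$ to $1$. Since $\a_1,\a_2,\b_1,\b_2$ form a $\Z$-basis of $\L$ (the change of basis \eqref{eq:new-basis} from $\a'_1,\a'_2,\b_1,\b_2$ is triangular with determinant $1$, hence unimodular), the functional $\phi$ is well defined and surjective, so $\ker\phi$ also has index $2$ in $\L$. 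Hence it suffices to prove the single inclusion $(1-\sigma^2)H_1(C_z,\Z)\subseteq \ker\phi$; equality of two index-$2$ subgroups then follows automatically, and with it the asserted equivalence.

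To establish that inclusion, first I would rewrite the four generators of $(1-\sigma^2)H_1(C_z,\Z)$ exhibited in the proof of Proposition \ref{prop:Lambda}, namely $\a'_1$, $\a'_2$, $\b_1-\b_2$, $\b_1+\b_2$, in terms of the basis $\a_1,\a_2,\b_1,\b_2$. Using \eqref{eq:new-basis} we have $\a'_1=\a_1+\b_1$ and $\a'_2=\a_2+\b_2$, so the four generators become $\a_1+\b_1$, $\a_2+\b_2$, $\b_1-\b_2$, $\b_1+\b_2$, with coefficient vectors $(p_1,p_2,q_1,q_2)$ equal to $(1,0,1,0)$, $(0,1,0,1)$, $(0,0,1,-1)$, $(0,0,1,1)$. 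In each case $p_1+p_2+q_1+q_2$ is even, so every generator, and therefore every element, of $(1-\sigma^2)H_1(C_z,\Z)$ lies in $\ker\phi$.

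Combining this inclusion with the two index-$2$ facts gives $(1-\sigma^2)H_1(C_z,\Z)=\ker\phi$, which is precisely the claim. I expect no genuine obstacle here: the only points requiring care are confirming that $\a_1,\a_2,\b_1,\b_2$ is indeed a $\Z$-basis of $\L$ (so that the parity functional is well defined on $\L$, not merely on $H_1(C_z,\Z)$), and faithfully importing the generator list for $(1-\sigma^2)H_1(C_z,\Z)$ together with its index-$2$ property from Proposition \ref{prop:Lambda}. Once these are in place the argument reduces to a one-line parity check followed by an index count, so the content is bookkeeping rather than anything geometric.
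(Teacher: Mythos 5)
Your proof is correct and follows essentially the same route as the paper: both identify the even-parity sublattice as an index-$2$ subgroup of $\L$, check that the four generators $\a'_1=\a_1+\b_1$, $\a'_2=\a_2+\b_2$, $\b_1-\b_2$, $\b_1+\b_2$ of $(1-\sigma^2)H_1(C_z,\Z)$ from the proof of Proposition \ref{prop:Lambda} have even coefficient sum, and conclude equality from $[\L:(1-\sigma^2)H_1(C_z,\Z)]=2$. Your explicit verification that $\a_1,\a_2,\b_1,\b_2$ is a $\Z$-basis of $\L$ via the parity homomorphism is a small extra precaution the paper leaves implicit, but the argument is the same.
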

\begin{proof}
The linear span 
$$\{p_1\a_1+p_2\a_2+q_1\b_1+q_2\b_2\mid 
p_1,p_2,q_1,q_2\in \Z,\ p_1+p_2+q_1+q_2\in 2\Z\}$$
is a subgroup of $\L$ of index 2.
Since the generators of $(1-\sigma^2)H_1(C_z,\Z)$ in Proof of 
Proposition \ref{prop:Lambda} are expressed as
$$\a'_1=\a_1+\b_1,\quad \a'_2=\a_2+\b_2,\quad \b_1-\b_2,\quad \b_1+\b_2,
$$
$(1-\sigma^2)H_1(C_z,\Z)$ is included in this linear span. 
Thus they coincide by the property 
$[\L:(1-\sigma^2)H_1(C_z,\Z)]=2$ shown in Proposition \ref{prop:Lambda}. 
\end{proof}
\section{The Abel-Jacobi $\L$-map}
\label{sec:abel-Jacobi}
For a compact Riemann surface $C$, its Jacobi variety $J(C)$ is defined by 
the quotient space $H^0(C,\W)^*/H_1(C,\Z)$, 
where $H^0(C,\W)^*$ is regarded as a subspace of $H_1(C,\C)$ by 
the duality between $H^1(C,\C)(\simeq H^0(C,\W)\oplus \overline{H^0(C,\W)})$ 
and $H_1(C,\C)$ via path integrals of $1$-forms along cycles.
The Abel-Jacobi map is defined by 
$$\jmath:C\ni P \mapsto 
\Big(H^0(C,\W)\ni \f \mapsto \int_{P_0} ^P \f \in \C\Big)\in J(C),
$$
where $P_0$ is a fixed initial point of $C$. 
Here note that the integral $\int_{P_0} ^P \f$ depends on paths 
connecting $P_0$ and $P$, however $\int_{P_0} ^P \f$ is uniquely determined as 
an element of the Jacobi variety $J(C)$. 
When an involution acts on $C$, there are detailed studies 
of the Prym variety 
$H^0_{-}(C,\W)^*/H_1^{-}(C,\Z)$ and the map 
$$C\ni P \mapsto 
\Big(H^0_{-}(C,\W)\ni \f \mapsto \int_{P_0} ^P 2\f \in \C\Big)\in 
H^0_{-}(C,\W)^*/H_1^{-}(C,\Z),
$$
in \cite{F}, 
where 
$H^0_{-}(C,\W)^*$ and $H_1^{-}(C,\Z)$ are the $(-1)$-eigenspaces of 
$H^0(C,\W)^*$ and $H_1(C,\Z)$ under the actions induced from the involution, 
respectively. 

In this section, we study the structures of the quotient space 
\begin{equation}
\label{eq:Lambda-quot}
J_\L(C)=H^0_{-\sigma^2}(C_z,\W)^*/\Lambda
\end{equation}
and a map 
\begin{equation}
\label{eq:AJ-Lambda}
\jmath_\L:C_z\ni P\mapsto 
\Big(H^0_{-\sigma^2}(C_z,\W)\ni \f \mapsto (1-\sigma^2)
\int_{P_0} ^P \f \in \C\Big)\in J_\L(C),
\end{equation}
where $P_0$ is the point of $C_z$ with $(v,w)=(0,0)$. 
We call $\jmath_\L$ the Able-Jacobi $\L$-map. 

At first, we represent $J_\L(C)$ as a quotient space of 
$\C^2$ modulo a lattice in it. 
For elements $\a_1,\a_2\in \L$ given in  Proposition \ref{prop:Lambda},  
let $\int_{\a_i}$ $(i=1,2)$ be elements of the dual space of 
$H^0_{-\sigma^2}(C_z,\W)\oplus \overline{H^0_{-\sigma^2}(C_z,\W)}
\simeq H^1_{-\sigma}(C_z,\C)$ 
given by 
$$\int_{\a_i}:
H^0_{-\sigma^2}(C_z,\W)\oplus \overline{H^0_{-\sigma^2}(C_z,\W)}
\ni \f \mapsto 
\int_{\a_i}\f\in \C.
$$
Similarly, we have $\int_{\b_i}$ $(i=1,2)$ for $\b_1,\b_2\in \L$. 
We can identify $\int_{\a_i}$, $\int_{\b_i}$ $(i=1,2)$
with $\a_i,\b_i\in H_1^{-\sigma^2}(C_z,\C)$.

\begin{lemma}
\label{lem:lattices}
Not only $\int_{\a_i}$ $(i=1,2)$  but also $\int_{\b_i}$ $(i=1,2)$
generate $H^0_{-\sigma^2}(C_z,\W)^*$. 
\end{lemma}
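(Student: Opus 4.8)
The plan is to reduce the spanning statement to the nonvanishing of two explicit path integrals. Since $H^0_{-\sigma^2}(C_z,\W)$ is spanned by $\eta_1,\eta_2$ by Proposition \ref{prop:sigma-action}, its dual $H^0_{-\sigma^2}(C_z,\W)^*$ is two-dimensional with dual basis $\eta_1^*,\eta_2^*$, and the two functionals $\int_{\b_1},\int_{\b_2}$ generate it exactly when the period matrix
\[
\begin{pmatrix}\int_{\b_1}\eta_1 & \int_{\b_1}\eta_2\\ \int_{\b_2}\eta_1 & \int_{\b_2}\eta_2\end{pmatrix}
\]
is invertible. First I would exploit $\sigma$-equivariance: since $\b_2=\sigma(\b_1)$ by \eqref{eq:sigma-action} and $\int_{\sigma(\g)}\eta=\int_\g\sigma^*\eta$, while $\sigma^*\eta_1=-\i\eta_1$ and $\sigma^*\eta_2=\i\eta_2$ by Proposition \ref{prop:sigma-action}, the bottom row equals $(-\i R,\ \i S)$ where $R=\int_{\b_1}\eta_1$ and $S=\int_{\b_1}\eta_2$. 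The determinant is then $2\i RS$, so the pair $\int_{\b_1},\int_{\b_2}$ generates $H^0_{-\sigma^2}(C_z,\W)^*$ if and only if $R\ne0$ and $S\ne0$.

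Next I would compute $R$ and $S$ as genuine path integrals. By Lemma \ref{lem:paths} we have $\b_1=(1-\sigma^2)\cdot I_{0,1}$, and since $(\sigma^2)^*\eta_j=-\eta_j$ for $j=1,2$ we obtain $\int_{\b_1}\eta_j=2\int_{I_{0,1}}\eta_j$. Thus it suffices to show $\int_{I_{0,1}}\eta_1\ne0$ and $\int_{I_{0,1}}\eta_2\ne0$, where $I_{0,1}$ runs from $P_0$ to $P_1$ over $v\in(0,1)$. Fixing $z\in(0,1)$, the quantities $v^3(1-v)(1-vz)$ and $v(1-v)^3(1-vz)^3$ are strictly positive on $(0,1)$, so with the branch conventions of Table \ref{tab:arg-t} and Proposition \ref{prop:a=c1=c2} the integrands $\eta_1=dv/\sqrt[4]{v^3(1-v)(1-vz)}$ and $\eta_2=dv/\sqrt[4]{v(1-v)^3(1-vz)^3}$ are a fixed unit-modulus phase times a strictly positive function; both integrals converge, since the endpoint exponents $-3/4$ and $-1/4$ exceed $-1$. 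Hence each is a nonzero multiple of a positive real number, giving $R,S\ne0$ and proving that the $\int_{\b_i}$ generate.

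For the complementary assertion that $\int_{\a_1},\int_{\a_2}$ also generate, the identical $\sigma$-equivariance reduction (now using $\a_2=\sigma(\a_1)$ from \eqref{eq:sigma-action}) shows it is equivalent to $\int_{\a_1}\eta_1\ne0$ and $\int_{\a_1}\eta_2\ne0$; writing $\a_1=\a_1'-\b_1$ and expanding $\a_1'$ via Proposition \ref{prop:Lambda} turns these into finite combinations of convergent real integrals over $(0,1)$ and $(1,1/z)$. The step I expect to be the main obstacle is precisely the branch bookkeeping here: unlike the single segment $I_{0,1}$, these combinations mix path integrals carrying different phases dictated by Table \ref{tab:arg-t}, so one must verify that no accidental cancellation occurs. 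Once the branch of $w$ is pinned down consistently along each segment, each combination is a nonzero multiple of a strictly positive integral, completing the proof.
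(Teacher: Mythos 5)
Your route is genuinely different from the paper's. The paper never computes a period: it argues that if $\int_{\a_1},\int_{\a_2}$ failed to span, there would be a nonzero $\f\in H^0_{-\sigma^2}(C_z,\W)$ whose image $\a_\f$ under the intersection-form identification $H_1^{-\sigma^2}(C_z,\C)^*\simeq H_1^{-\sigma^2}(C_z,\C)$ lies in the span of $\a_1,\a_2$; since that span is isotropic, $\iint_{C_z}\f\wedge\overline{\f}=\a_\f\cdot\a_{\overline{\f}}=0$, contradicting the positivity of the hermitian form $\i\iint_{C_z}\f\wedge\overline{\f}$. Because $\{\b_1,\b_2\}$ is isotropic as well, the identical argument disposes of both halves of the lemma at once, with no branch of $w$ ever chosen. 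Your first half --- the $\sigma$-equivariant reduction showing the period matrix has determinant $2\i RS$ with $R=2\int_{I_{0,1}}\eta_1$, $S=2\int_{I_{0,1}}\eta_2$, and the positivity of these integrals over $(0,1)$ --- is complete, correct, and more explicit than the paper; it anticipates computations the paper only performs later, in Theorems \ref{th:period-matrix} and \ref{th:rel-Prym-Periods}.

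The second half, however, contains a genuine gap, and your description of what would close it is not accurate. After expanding $\a_1=\a_1'-\b_1$ with $\a_1'=(1-\sigma^2)(1-\sigma)(I_{0,1}+I_{1,1/z})$ and converting the $\sigma$-translates into scalar factors, one gets for instance
$$\int_{\a_1}\eta_1=(2+2\i)\Big(\int_{I_{0,1}}\eta_1+\int_{I_{1,1/z}}\eta_1\Big)-2\int_{I_{0,1}}\eta_1,$$
and with the branch convention of Proposition \ref{prop:symplectic-basis} the two path integrals carry \emph{different} unit phases ($1$ on $(0,1)$ and $e^{-\i\pi/4}$ on $(1,1/z)$ for $\eta_1$; $1$ and $e^{-3\i\pi/4}$ for $\eta_2$). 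The resulting quantity is a sum of two complex numbers with distinct arguments, not ``a nonzero multiple of a strictly positive integral'' as you assert, so genuine cancellation is a priori possible and ruling it out is exactly the step you defer. It can be carried out (the phases one obtains are never antipodal, so the sum is nonzero), but this is the delicate part of the whole computation --- the phase bookkeeping must be done consistently with Table \ref{tab:arg-t}, Proposition \ref{prop:symplectic-basis} and Lemma \ref{lem:paths}, and separately for $\eta_1$ and $\eta_2$ --- and it is precisely the work that the paper's isotropy-plus-positivity argument is designed to bypass. As written, your proof establishes only the $\b$-half of the lemma.
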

\begin{proof}
Suppose that $H^0_{-\sigma^2}(C_z,\W)^*$ is not generated by 
$\int_{\a_i}$ $(i=1,2)$. 
Then there exists a non-zero $\f\in H^0_{-\sigma^2}(C_z,\W)$ such that 
$$\int_{\a_1} \f=\int_{\a_2} \f=0.$$
By $\f\in 
H^1_{-\sigma^2}(C_z,\C)
\simeq H_1^{-\sigma^2}(C_z,\C)^*$ and the isomorphism 
$H_1^{-\sigma^2}(C_z,\C)^*\simeq H_1^{-\sigma^2}(C_z,\C)$ with respect to 
the intersection form,
$\f$ is identified with $\a_\f\in H_1^{-\sigma^2}(C_z,\C)$ satisfying 
$$A^{-}\cdot \a_\f=\int_{A^{-}} \f$$
for any $A^{-}\in H_1^{-\sigma^2}(C_z,\C)$.
Since 
$$\a_1\cdot \a_\f=\int_{\a_1} \f=0, \quad 
\a_2\cdot \a_\f=\int_{\a_2} \f=0,
$$
$\a_\f$ is expressed as a linear combination $c_1\a_1+c_2\a_2$ $(c_1,c_2\in \C)$.
The complex conjugate $\overline{\f}$ of $\f$ is identified with 
$\a_{\overline{\f}}
={c'_1}\a_1+{c'_2}\a_2\in H_1^{-\sigma^2}(C_z,\C)$ 
$(c'_1,c'_2\in \C)$
by properties
$$\int_{\a_1} \overline{\f}=\int_{\a_2}\overline{\f}=0.
$$
By the compatibility of the intersection forms 
on homology and cohomology groups, we have 
$$\iint_{C_z} \f\wedge \bar \f =\a_\f\cdot \a_{\overline{\f}}
=(c_1\a_1+c_2\a_2)\cdot ({c'_1}\a_1+{c'_2}\a_2)=0. 
$$
However, this contradicts the positivity of the hermitian form 
$$\i \iint_{C_z} \f\wedge \overline{\f}$$
on the space $H^0_{-\sigma^2}(C_z,\W)$. We can similarly show the statement 
for $\int_{\b_1},\int_{\b_2}$.
\end{proof}

By using a basis $\tr(\int_{\b_1}, \int_{\b_2})$ of 
$H^0_{-\sigma^2}(C_z,\W)^*$, we embed $\L$ into $\C^2$, and 
represent $J_\L(C_z)$ as the quotient of $\C^2$ modulo 
the image of $\L$ under this embedding. 
Since $\b_i$ is identified with $\int_{\b_i}$, $\b_1$ and $\b_2$ are 
represented by the unit row vectors $e_1=(1,0)$ and $e_2=(0,1)$, 
respectively.
By Lemma \ref{lem:lattices}, 
$\int_{\a_i}$ can be expressed as 
a linear combination of $\int_{\b_1}, \int_{\b_2}$. 
We set 
$$
\begin{pmatrix} \int_{\a_1}\\ \int_{\a_2}
\end{pmatrix}
=\begin{pmatrix}
\tau_{1,1} & \tau_{1,2}\\
\tau_{2,1} & \tau_{2,2}
\end{pmatrix}
\begin{pmatrix} \int_{\b_1}\\ \int_{\b_2}
\end{pmatrix}.
$$
Let $(\f_1,\f_2)$ be the basis of $H^0_{-\sigma^2}(C_z,\W)$
dual  to $\tr(\int_{\b_1},\int_{\b_2})$, which satisfies 
\begin{equation}
\label{eq:dual-Lambda}
\sint_{\b_i} (\f_j)=\int_{\b_i} \f_j=\d_{i,j}.
\end{equation}
Then we have 
$$\tau_{i,j}=\sint_{\a_i}(\f_j)=\int_{\a_i}\f_j.$$

\begin{theorem}
\label{th:period-matrix}
We have 
$$
\begin{pmatrix}
\tau_{1,1} & \tau_{1,2}\\
\tau_{2,1} & \tau_{2,2}
\end{pmatrix}=
\begin{pmatrix}
\tau & 0\\
0& \tau
\end{pmatrix}, 
\quad 
\tau
=\frac{\int_{\a_1} \eta_1}{\int_{\b_1} \eta_1}
=\frac{\int_{\a_2} \eta_2}{\int_{\b_2} \eta_2}
\in \H.
$$
In particular, $J_\L(C_z)$ decomposes into the direct sum 
$$\la \sint_{\b_1}\ra_\C/\la \a_1,\b_1\ra_\Z\oplus 
\la \sint_{\b_2}\ra_\C/\la \a_2,\b_2\ra_\Z, 
$$
and each component is isomorphic to a complex torus 
$T_\tau=\C/(\Z\tau+\Z),$
where $\la \sint_{\b_i}\ra_\C$ denotes the $\C$-span of  
$\sint_{\b_i}$ and $\la \a_i,\b_i\ra_\Z$
denotes the $\Z$-span of $\a_i,\b_i$.
\end{theorem}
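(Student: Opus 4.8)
The plan is to exploit the $\sigma$-equivariance of the period integrals to reduce everything to the periods of $\eta_1$ and $\eta_2$ over $\a_1$ and $\b_1$, and then to impose Riemann's bilinear relations on the Prym part. First I would set
$$a_1=\int_{\a_1}\eta_1,\quad b_1=\int_{\b_1}\eta_1,\quad a_2=\int_{\a_1}\eta_2,\quad b_2=\int_{\b_1}\eta_2.$$
Using $\int_{\sigma(\g)}\eta=\int_\g\sigma^*\eta$ together with $\sigma^*\eta_1=-\i\eta_1$, $\sigma^*\eta_2=\i\eta_2$ from Proposition \ref{prop:sigma-action} and the relations $\sigma(\a_1)=\a_2$, $\sigma(\b_1)=\b_2$ in \eqref{eq:sigma-action}, I obtain
$$\int_{\a_2}\eta_1=-\i a_1,\quad \int_{\b_2}\eta_1=-\i b_1,\quad \int_{\a_2}\eta_2=\i a_2,\quad \int_{\b_2}\eta_2=\i b_2.$$
Writing $\f_1,\f_2$ as linear combinations of $\eta_1,\eta_2$ and solving the normalization $\int_{\b_i}\f_j=\d_{i,j}$ gives explicit coefficients; substituting into $\tau_{i,j}=\int_{\a_i}\f_j$ then yields
$$\tau_{1,1}=\tau_{2,2}=\tfrac12\Big(\tfrac{a_1}{b_1}+\tfrac{a_2}{b_2}\Big),\qquad \tau_{1,2}=-\tau_{2,1}=\tfrac{\i}{2}\Big(\tfrac{a_1}{b_1}-\tfrac{a_2}{b_2}\Big).$$
Thus equivariance alone forces the diagonal entries to coincide and the off-diagonal entries to be negatives of one another.

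Second, to kill the off-diagonal entries I would apply Riemann's first bilinear relation to the pair $\eta_1,\eta_2$. The key preliminary observation is that any $\eta\in H^0_{-\sigma^2}(C_z,\W)$ has vanishing periods along $H_1^{\sigma^2}(C_z,\Z)$, since for $\g$ fixed by $\sigma^2$ one has $\int_\g\eta=\int_\g(\sigma^2)^*\eta=-\int_\g\eta$. Hence, completing $\tfrac{1}{\sqrt2}\a_1,\tfrac{1}{\sqrt2}\a_2,\tfrac{1}{\sqrt2}\b_1,\tfrac{1}{\sqrt2}\b_2$ (a symplectic $\R$-basis of $H_1^{-\sigma^2}(C_z,\R)$ by \eqref{eq:sigma-action}) with a symplectic basis of the orthogonal summand $H_1^{\sigma^2}(C_z,\R)$ (orthogonality is Proposition \ref{prop:eigenspaces-homology}), only the $-\sigma^2$ part contributes, and the relation reads $\sum_{i=1,2}\big(\int_{\b_i}\eta_1\int_{\a_i}\eta_2-\int_{\a_i}\eta_1\int_{\b_i}\eta_2\big)=0$. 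Substituting the periods above collapses the two terms to the single condition $a_1 b_2=a_2 b_1$, that is $a_1/b_1=a_2/b_2$. This makes $\tau_{1,2}=\tau_{2,1}=0$ and identifies the common diagonal value with $\tau=a_1/b_1=a_2/b_2$; since $\int_{\a_2}\eta_2/\int_{\b_2}\eta_2=\i a_2/(\i b_2)=a_2/b_2$, this is exactly the value asserted in the theorem.

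Third, for $\tau\in\H$ I would invoke the positivity of the Hermitian form $\i\iint_{C_z}\eta_1\wedge\overline{\eta_1}>0$, already used in Lemma \ref{lem:lattices}. Expanding it in the same symplectic basis and again using the vanishing of periods along $H_1^{\sigma^2}$ reduces it to $-2\,\im(b_1\overline{a_1})>0$, which is equivalent to $\im(a_1/b_1)>0$, i.e. $\tau\in\H$. Finally, the diagonal shape of $(\tau_{i,j})$ means that the lattice images of $\a_1,\b_1$ and of $\a_2,\b_2$ lie in the two coordinate lines of $\C^2=\la\int_{\b_1}\ra_\C\oplus\la\int_{\b_2}\ra_\C$, which gives the claimed splitting $J_\L(C_z)\simeq T_\tau\oplus T_\tau$. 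I expect the main obstacle to be bookkeeping rather than conceptual: one must carefully track the rescaling factors when passing from the non-unimodular basis $\a_i,\b_i$ (with $\a_i\cdot\b_j=-2\d_{i,j}$) to an honest symplectic basis, and verify that the cross terms between the two $\sigma^2$-eigenspaces genuinely drop out of both bilinear relations, so that the Prym periods can be treated in isolation.
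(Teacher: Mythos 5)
Your argument is correct, and it assembles the same three ingredients as the paper --- $\sigma$-equivariance, the first Riemann bilinear relation ($\iint\eta\wedge\eta'=0$ for holomorphic forms), and the positivity of $\i\iint\eta\wedge\overline{\eta}$ --- but packages them differently. You work throughout with the eigenform basis $\eta_1,\eta_2$, first solving the normalization $\int_{\b_i}\f_j=\d_{i,j}$ to express $\f_1,\f_2$ in terms of $\eta_1,\eta_2$ (this is exactly the content of the paper's Corollary \ref{cor:ex-dual-basis}, which you in effect prove first and the paper derives only afterwards from the theorem), and you evaluate the bilinear relations by expanding over a real symplectic basis, using the auxiliary observation that anti-invariant forms have vanishing periods over $H_1^{\sigma^2}(C_z,\Z)$ so that only the Prym block contributes. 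The paper instead identifies $\f_1,\f_2$ with classes in $H_1^{-\sigma^2}(C_z,\C)$ via the intersection pairing, writes $\f_j=\frac{1}{2}(\a_j-\sum_i\tau_{i,j}\b_i)$, and reads off both bilinear relations as intersection numbers; it obtains symmetry $\tau_{1,2}=\tau_{2,1}$ from $\iint\f_1\wedge\f_2=0$ and antisymmetry from equivariance, whereas your version of the first relation collapses directly to $a_1b_2=a_2b_1$ and kills the off-diagonal entries in one stroke. Your route buys a more classical, computation-by-periods presentation at the cost of the rescaling and completion-of-basis bookkeeping you already flag (the signs there do work out, and your criterion $\im(a_1/b_1)>0$ matches the paper's $\i\iint\f_1\wedge\overline{\f_1}=\im(\tau)>0$); the paper's route is slightly leaner because the intersection-form computation needs no choice of symplectic basis of the full $H_1$. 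One small point you should make explicit: solvability of the normalization, and the legitimacy of dividing by $\int_{\b_1}\eta_1$ and $\int_{\b_1}\eta_2$, rests on these periods being nonzero, which follows from Lemma \ref{lem:lattices} combined with $\int_{\b_2}\eta_j=\pm\i\int_{\b_1}\eta_j$.
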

\begin{proof}
Regard $\f_1,\f_2$ as elements of $H_1^{-\sigma^2}(C_z,\C)^*$
and $H_1^{-\sigma^2}(C_z,\C)^*$ as the space isomorphic to 
$H_1^{-\sigma^2}(C_z,\C)$ with respect to the intersection form. 
Then we can express $\f_1,\f_2$ 
as linear combinations of $\a_1,\a_2,\b_1,\b_2\in H_1^{-\sigma^2}(C_z,\C)$ over $\C$ as : 
$$
\f_j=z_{1,j}\a_1+z_{2,j}\a_2+z_{3,j}\b_1+z_{4,j}\b_2\quad (j=1,2).
$$
Since
\begin{align*}
\d_{i,j}
=&\int_{\b_i} \f_j=\beta_i\cdot 
(z_{1,j}\a_1+z_{2,j}\a_2+z_{3,j}\b_1+z_{4,j}\b_2)=
2(z_{1,j}\d_{i,1}+z_{2,j}\d_{i,2}),
\\
(\tau_{i,1}\d_{1,j}+\tau_{i,2}\d_{2,j})
=&(\tau_{i,1}\sint_{\b_1}+\tau_{i,2}\sint_{\b_2})(\f_j)
 =\sint_{\a_i}(\f_j)=\int_{\a_i} \f_j
\\
 =&\a_i\cdot 
(z_{1,j}\a_1+z_{2,j}\a_2+z_{3,j}\b_1+z_{4,j}\b_2)=-2(z_{3,j}\d_{i,1}+
z_{4,j}\d_{i,2}),
\end{align*}
the expressions of $\f_j$ reduce to 
\begin{equation}
\label{eq:phi-hom-exp}
\f_1=\frac{1}{2}(\a_1-\tau_{1,1}\b_1-\tau_{2,1}\b_2),\quad 
\f_2=\frac{1}{2}(\a_2-\tau_{1,2}\b_1-\tau_{2,2}\b_2).
\end{equation}
By the compatibility of the intersection forms on homology and cohomology 
groups, we have
$$0=\iint_{C_z} \f_1\wedge \f_2 =\frac{1}{4}
(\a_1-\tau_{1,1}\b_1-\tau_{2,1}\b_2)\cdot 
(\a_2-\tau_{1,2}\b_1-\tau_{2,2}\b_2)=\frac{1}{2}(\tau_{1,2}-\tau_{2,1}),
$$
which means that 
$\tau$ is symmetric.

We consider the pull backs $\sigma^*(\f_1)$, $\sigma^*(\f_2)$ 
under the action $\sigma$, which can be expressed as linear combinations of 
$\f_1,\f_2$. Since 
$$\int_{\b_1} \sigma^*(\f_j)=\int_{\sigma(\b_1)} \f_j=\int_{\b_2} \f_j=\d_{2,j},
\quad 
\int_{\b_2} \sigma^*(\f_j)=\int_{\sigma(\b_2)} \f_j=-\int_{\b_1} \f_j=-\d_{1,j},
$$
we have 
$$\sigma^*(\f_1)=-\f_2,\quad \sigma^*(\f_2)=\f_1.$$
By using (\ref{eq:phi-hom-exp}), we express them as 
\begin{align*}
\sigma^*(\f_1)&=\frac{1}{2}\sigma^{-1}
(\a_1-\tau_{1,1}\b_1-\tau_{2,1}\b_2)
=\frac{-1}{2}(\a_2-\tau_{1,1}\b_2+\tau_{2,1}\b_1),\\
\sigma^*(\f_2)&=\frac{1}{2}\sigma^{-1}(\a_2-\tau_{1,2}\b_1-\tau_{2,2}\b_2)
=\frac{1}{2}(\a_1+\tau_{1,2}\b_2-\tau_{2,2}\b_1).
\end{align*}
Thus we have relations 
$$\tau_{1,1}=\tau_{2,2}=\tau,\quad \tau_{2,1}=-\tau_{1,2},
$$
and 
$\f_j=\a_j-\tau\b_j$.

Since 
$$0<\i\iint_{C_z} \f_1\wedge \overline{\f_1} 
=\frac{\i}{4}\cdot (\a_1-\tau\b_1)\cdot \overline{(\a_1-\tau\b_1)}
=\frac{\tau-\overline{\tau}}{2\i}=\im(\tau),
$$
$\tau$ belongs to $\H$.

Since 
$$\sigma^*(\f_1-\i\f_2)=-\f_2-\i\f_1=-\i\cdot (\f_1-\i\f_2),\quad 
\sigma^*(-\i\f_1+\f_2)=\i\f_2+\f_1=\i\cdot (-\i\f_1+\f_2),
$$ 
$\f_1-\i\f_2$ and $-\i\f_1+\f_2$ are eigenvectors of $\sigma$ 
of eigenvalues $-\i$ and $\i$, and coincide with 
non-zero constant multiples of $\eta_1$ and $\eta_2$, respectively.
By $$\int_{\a_1}\f_2=\int_{\b_1}\f_2=\int_{\a_2}\f_1=\int_{\b_2}\f_1=0,$$
we have
\begin{align*}
\tau&=\int_{\a_1}\f_1
=\frac{\int_{\a_1}\f_1}{\int_{\b_1}\f_1}
=\frac{\int_{\a_1}\f_1-\i\f_2}{\int_{\b_1}\f_1-\i\f_2}
=\frac{\int_{\a_1}\eta_1}{\int_{\b_1}\eta_1},\\
&=\int_{\a_2}\f_2
=\frac{\int_{\a_2}\f_2}{\int_{\b_2}\f_2}
=\frac{\int_{\a_2}-\i\f_1+\f_2}{\int_{\b_2}-\i\f_1+\f_2}
=\frac{\int_{\a_2}\eta_2}{\int_{\b_2}\eta_2},
\end{align*}
which complete Proof of Theorem \ref{th:period-matrix}.
\end{proof}

\begin{cor}
  \label{cor:ex-dual-basis}
We express $\f_1,\f_2$ forming the dual basis of $H^1_{-\sigma^2}(C_z,\W)$ to 
$\tr(\int_{\b_1},\int_{\b_2})$ as linear combinations
\begin{equation}
\label{eq:ex-dual-basis}
\f_1=\frac{1}{2\int_{\b_1}\eta_1}\eta_1+ \frac{1}{2\int_{\b_1}\eta_2}\eta_2,
\quad   
\f_2=\frac{1}{2\int_{\b_2}\eta_1}\eta_1+\frac{1}{2\int_{\b_2}\eta_2}\eta_2.
\end{equation}
of $\eta_1$ and $\eta_2$ satisfying 
$\sigma^*(\eta_1)=-\i\eta_1$ and $\sigma^*(\eta_2)=\i\eta_2$.
\end{cor}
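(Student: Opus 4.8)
The plan is to expand each dual form $\f_j$ in the eigenbasis $\{\eta_1,\eta_2\}$ of $H^0_{-\sigma^2}(C_z,\W)$ furnished by Proposition \ref{prop:sigma-action}, and then pin down the two coefficients using the normalization $\int_{\b_i}\f_j=\d_{i,j}$ together with the behaviour of the periods $\int_{\b_1},\int_{\b_2}$ under the covering map $\sigma$. Writing $\f_1=c_{11}\eta_1+c_{12}\eta_2$, the defining conditions $\int_{\b_1}\f_1=1$ and $\int_{\b_2}\f_1=0$ become a $2\times2$ linear system in $c_{11},c_{12}$ whose coefficients are the four periods $\int_{\b_i}\eta_k$; solving it will give the claimed expression, and the analogous system for $\f_2$ will give the second formula.

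The key input is the transformation law of periods under $\sigma$. From \eqref{eq:sigma-action} we have $\sigma(\b_1)=\b_2$, hence $\int_{\b_2}\w=\int_{\b_1}\sigma^*(\w)$ for every $1$-form $\w$. Applying this to $\eta_1,\eta_2$ and using $\sigma^*(\eta_1)=-\i\eta_1$ and $\sigma^*(\eta_2)=\i\eta_2$ gives
\[
\int_{\b_2}\eta_1=-\i\int_{\b_1}\eta_1,\qquad \int_{\b_2}\eta_2=\i\int_{\b_1}\eta_2.
\]
Substituting these into $\int_{\b_2}\f_1=c_{11}\int_{\b_2}\eta_1+c_{12}\int_{\b_2}\eta_2=0$ yields $c_{11}\int_{\b_1}\eta_1=c_{12}\int_{\b_1}\eta_2$; combining with $c_{11}\int_{\b_1}\eta_1+c_{12}\int_{\b_1}\eta_2=1$ then forces each of these two products to equal $\tfrac12$, so that $c_{11}=1/(2\int_{\b_1}\eta_1)$ and $c_{12}=1/(2\int_{\b_1}\eta_2)$, which is exactly the asserted expression for $\f_1$. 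Repeating the computation with $\b_1\leftrightarrow\b_2$ (again invoking $\sigma(\b_1)=\b_2$, so that now $\int_{\b_1}\eta_k=\mp\i\int_{\b_2}\eta_k$) produces the formula for $\f_2$.

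Before dividing I should record that the denominators do not vanish: if $\int_{\b_1}\eta_1$ were zero, the transformation law would force $\int_{\b_2}\eta_1=0$ as well, so the nonzero form $\eta_1$ would be annihilated by both $\int_{\b_1}$ and $\int_{\b_2}$, contradicting Lemma \ref{lem:lattices}; the same argument gives $\int_{\b_1}\eta_2\neq0$. I do not expect a genuine obstacle here, since everything is linear algebra once the period transformation law is in hand. The only point requiring care is the bookkeeping of $\i$ versus $-\i$, that is, keeping the eigenvalues attached to the correct forms as fixed in Proposition \ref{prop:sigma-action} and restated at the end of Corollary \ref{cor:ex-dual-basis}, so that the two displayed formulas emerge with precisely the intended coefficients.
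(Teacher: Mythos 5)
Your proof is correct and follows essentially the same route as the paper's: both arguments come down to the duality conditions $\int_{\b_i}\f_j=\d_{i,j}$ combined with the relation $\int_{\b_2}\eta_k=\int_{\sigma(\b_1)}\eta_k=\int_{\b_1}\sigma^*(\eta_k)=\mp\i\int_{\b_1}\eta_k$, the only cosmetic difference being that you solve the resulting $2\times2$ linear system directly while the paper first splits $\f_1,\f_2$ into the eigenvector combinations $\f_1-\i\f_2$ and $-\i\f_1+\f_2$ obtained in the proof of Theorem \ref{th:period-matrix}. Your explicit check that the denominators $\int_{\b_1}\eta_k$ are nonzero (via Lemma \ref{lem:lattices}) is a point the paper leaves implicit.
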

\begin{proof}
We have  shown in Proof of \ref{th:period-matrix} that  
$\f_1-\i\f_2$, $-\i\f_1+\f_2$ are eigenvectors of $\sigma$ of eigenvalues
$-\i$ and $\i$.
Thus $\f_1$ and $\f_2$ admit expressions 
\begin{align*}
  \f_1&=
\frac{\f_1-\i\f_2}{2\int_{\beta_1}(\f_1-\i\f_2)}
+\frac{\i(-\i\f_1+\f_2)}{2\int_{\beta_2}(-\i\f_1+\f_2)}=
\frac{\eta_1}{2\int_{\beta_1}\eta_1}
 +\frac{\i\eta_2}{2\int_{\sigma(\beta_1)}\eta_2}
=\frac{\eta_1}{2\int_{\b_1}\eta_1}+ \frac{\eta_2}{2\int_{\b_1}\eta_2}        ,\\
  \f_2&=
\frac{\i(\f_1-\i\f_2)}{2\int_{\beta_1}(\f_1-\i\f_2)}
+\frac{-\i\f_1+\f_2}{2\int_{\beta_2}(-\i\f_1+\f_2)}                
=\frac{-\i\eta_1}{2\int_{\sigma(\beta_2)}\eta_1}
        +\frac{\eta_2}{2\int_{\beta_2}\eta_2}
 =\frac{\eta_1}{2\int_{\b_2}\eta_1}+ \frac{\eta_2}{2\int_{\b_2}\eta_2},        
\end{align*}
by $\int_{\b_i}\f_j=\d_{i,j}$ and $\int_{\sigma(\b_i)}\eta_j=
\int_{\b_i}\sigma^*(\eta_j)$.
\end{proof}

Next, we consider the Abel-Jacobi $\L$-map in \eqref{eq:AJ-Lambda}. 
Note that the integral 
$$(1-\sigma^2)\int_{P_0}^P \f=\int_{\overrightarrow{P_0P}} \f
-\int_{\sigma^2\cdot \overrightarrow{P_0P}} \f,$$ 
depends on the choice of a path $\overrightarrow{P_0P}$ from $P_0$ to $P$, 
where $\sigma^2\cdot \overrightarrow{P_0P}$ is the 
$\sigma^2$-image of $\overrightarrow{P_0P}$.
However, the differences can be expressed as 
$\int_A \f$ for 
$$A\in (1-\sigma^2)\cdot H_1(C_z,\Z)\subset \L,$$ 
the map $\jmath_\L$ is well-defined. Since $\f\in  H^0_{-\sigma^2}(C_z,\W)$, we 
have 
$$(1-\sigma^2)\cdot \int_{P_0}^P \f
=\int_{P_0}^P \f-\sigma^2\cdot \int_{P_0}^P \f
=\int_{P_0}^P \f-\int_{P_0}^P (\sigma^2)^*(\f)
=\int_{P_0}^P 2\f.
$$
By using the basis $(\f_1,\f_2)$ of $H^0_{-\sigma^2}(C_z,\W)$ satisfying
\begin{equation*}
\int_{\a_i}\f_j=\d_{i,j}\tau,\quad 
\int_{\b_i}\f_j=\d_{i,j},
\end{equation*}
for $\a_1,\a_2,\b_1,\b_2\in \L$,  
we regard the map $\jmath_\L$ as 
$$\jmath_\L:C_z\ni P \mapsto (\int_{P_0}^P 2\f_1,\int_{P_0}^P 2\f_2)
\in T_\tau\oplus T_\tau.$$
By compositing $\jmath_\L$ and the projection $T_\tau\oplus T_\tau\to T_\tau$,  
we have maps 
\begin{equation}
\label{eq:SAJ-map}
\jmath_{\L_i}:C_z\ni P \mapsto 
\int_{P_0}^P 2\f_i
\in T_\tau \quad (i=1,2).
\end{equation}
Since 
\begin{equation}
\label{eq:rel-jmaths}
\jmath_{\L_2}(P)=\int_{P_0}^P 2\f_2=
\int_{P_0}^P -2(\sigma)^*(\f_1)=
-\sigma \cdot \int_{P_0}^P 2\f_1=-\jmath_{\L_1}(\sigma\cdot P),
\end{equation}
we can regard $\jmath_{\L_2}$ as the $(-\sigma)$-action of $\jmath_{\L_1}$.
\begin{proposition}
\label{prop:images} 
Let $\widetilde{C_z}$ be the universal covering of $C_z$, and  
let $$\widetilde{\jmath_{\L}}:\widetilde{C_z}\to \C^2$$
be the lift of the map $\jmath_{\L}:C_z\to T_\tau\oplus T_\tau$. 
Then the images of $P_0$, $P_1$, $P_{1/z}$, $P_\infty$ under the map 
$\widetilde{\jmath_\L}$
are 
$$(0,0),\quad (1,0),\quad \big(\frac{\tau+1}{2},\frac{\tau+1}{2}\big),\quad 
\big(\frac{\tau+1}{2},\frac{\tau-1}{2}\big),$$
where the paths from $P_0$ to $P_1$, $P_{1/z}$ and $P_{\infty}$ 
are $I_{0,1}$, $I_{0,1}+I_{1,1/z}$, $I_{0,1}+I_{1,1/z}+I_{1/z,\infty}$, respectively.
\end{proposition}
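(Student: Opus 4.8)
The plan is to reduce the computation of each value $\widetilde{\jmath_\L}(P_k)$ to the cycle identities already recorded in Lemma \ref{lem:paths}. The starting observation is that the four points $P_0,P_1,P_{1/z},P_\infty$ are ramification points of $\sigma$, hence fixed by $\sigma^2$; consequently, for any path $\gamma$ from $P_0$ to one of them, the $1$-chain $(1-\sigma^2)\gamma=\gamma-\sigma^2\gamma$ is a genuine closed cycle, i.e.\ an element of $H_1(C_z,\Z)$, and in fact of $H_1^{-\sigma^2}(C_z,\Z)$.

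First I would rewrite the two coordinate functions of $\jmath_\L$ in cycle form. Since each $\f_j$ lies in $H^0_{-\sigma^2}(C_z,\W)$, so that $(\sigma^2)^*\f_j=-\f_j$, one has
$$\int_{(1-\sigma^2)\gamma}\f_j=\int_\gamma\f_j-\int_{\sigma^2\gamma}\f_j=\int_\gamma\f_j-\int_\gamma(\sigma^2)^*\f_j=2\int_\gamma\f_j,$$
so the two coordinates of $\widetilde{\jmath_\L}(P)$ along the prescribed lift of $\gamma$ are exactly $\int_{(1-\sigma^2)\gamma}\f_1$ and $\int_{(1-\sigma^2)\gamma}\f_2$.

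Next I would substitute the specified paths and invoke Lemma \ref{lem:paths}. For $P_1$ the path is $I_{0,1}$ and $(1-\sigma^2)I_{0,1}=\b_1$; for $P_{1/z}$ the path is $I_{0,1}+I_{1,1/z}$, giving $(1-\sigma^2)(I_{0,1}+I_{1,1/z})=\tfrac12(\a_1+\a_2+\b_1+\b_2)$; for $P_\infty$ the path is $I_{0,1}+I_{1,1/z}+I_{1/z,\infty}$, giving $\tfrac12(\a_1+\a_2+\b_1-\b_2)$. Evaluating these cycles against $\f_1,\f_2$ by means of the duality relations $\int_{\a_i}\f_j=\d_{i,j}\tau$ and $\int_{\b_i}\f_j=\d_{i,j}$ established in Theorem \ref{th:period-matrix} yields the four pairs $(0,0)$, $(1,0)$, $\big(\tfrac{\tau+1}{2},\tfrac{\tau+1}{2}\big)$, $\big(\tfrac{\tau+1}{2},\tfrac{\tau-1}{2}\big)$, as claimed; the value at $P_0$ is $(0,0)$ since there the path is trivial.

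The argument is essentially mechanical once the cycle identities of Lemma \ref{lem:paths} are in place, so I do not expect a genuine obstacle. The only point requiring care is the bookkeeping of paths and orientations, so that the combinations $(1-\sigma^2)\gamma$ really equal the listed integer (or half-integer) linear combinations of $\a_i,\b_i$; the appearance of half-integer coefficients is legitimate precisely because the resulting classes satisfy the parity criterion of Lemma \ref{lem:paths} and therefore do lie in $H_1^{-\sigma^2}(C_z,\Z)$.
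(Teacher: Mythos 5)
Your proposal is correct and follows essentially the same route as the paper: reduce each value to the cycle $(1-\sigma^2)\gamma$ for the prescribed path, express that cycle via Lemma \ref{lem:paths} as $\b_1$, $\tfrac12(\a_1+\a_2+\b_1+\b_2)$, $\tfrac12(\a_1+\a_2+\b_1-\b_2)$ respectively, and evaluate against $\f_1,\f_2$ using $\int_{\a_i}\f_j=\d_{i,j}\tau$, $\int_{\b_i}\f_j=\d_{i,j}$. The only difference is that you spell out the preliminary identity $(1-\sigma^2)\int_{P_0}^P\f=\int_{(1-\sigma^2)\gamma}\f=2\int_\gamma\f$, which the paper records just before the proposition rather than inside its proof.
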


\begin{proof}
It is clear that $\jmath_{\L}(P_0)=(0,0)$. 
By Lemma \ref{lem:paths}, we have 
$$\big((1-\sigma^2)\cdot \int_{P_0}^{P_1}\f_1,
(1-\sigma^2)\cdot \int_{P_0}^{P_1}\f_2\big)
=\big(\int_{\b_1}\f_1,\int_{\b_1}\f_2\big)=(1,0),$$ 
which is equal to the zero of 
$T_\tau$. We have also 
\begin{align*}
&\big((1-\sigma^2)\cdot\int_{P_0}^{P_{1/z}}\f_1, 
(1-\sigma^2)\cdot\int_{P_0}^{P_{1/z}}\f_2\big) 
=\big(\frac{\tau+1}{2},\frac{\tau+1}{2}\big),\\ 
&\big((1-\sigma^2)\cdot\int_{P_0}^{P_{\infty}}\f_1, 
(1-\sigma^2)\cdot\int_{P_0}^{P_{\infty}}\f_2\big) 
=\big(\frac{\tau+1}{2},\frac{\tau-1}{2}\big), 
\end{align*}
since $(1-\sigma^2)\cdot (I_{0,1}+I_{1,1/z})$ and 
$(1-\sigma^2)\cdot (I_{0,1}+I_{1,1/z}+I_{1/z,\infty})$ are 
equal to $\dfrac{\a_1+\a_2+\b_1+\b_2}{2}$ and 
$\dfrac{\a_1+\a_2+\b_1-\b_2}{2}$ by Lemma \ref{lem:paths}.
\end{proof}

\begin{proposition}
\label{prop:inje-jmath-L}
The map $\jmath_{\L_i}:C_z \to T_\tau$ is of degree $2$, and  
the map $\jmath_\L:C_z\to J_\L(C_z)$ is generically injective.
\end{proposition}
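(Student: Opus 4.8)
The plan is to handle the two assertions separately: first compute $\deg\jmath_{\L_i}$ analytically, and then deduce generic injectivity of $\jmath_\L$ from this degree together with the relation \eqref{eq:rel-jmaths}.

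First I would compute $\deg\jmath_{\L_1}$ by the analytic degree formula $\deg\jmath_{\L_1}=\big(\iint_{C_z}\jmath_{\L_1}^*\w_{T_\tau}\big)\big/\big(\iint_{T_\tau}\w_{T_\tau}\big)$ for the invariant area form $\w_{T_\tau}=\frac{\i}{2}dy\wedge d\bar y$ on $T_\tau=\C/(\Z\tau+\Z)$, where $y$ is the coordinate. Since $\jmath_{\L_1}$ is defined by integrating $2\f_1$, we have $\jmath_{\L_1}^*dy=2\f_1$, hence $\jmath_{\L_1}^*\w_{T_\tau}=2\i\,\f_1\wedge\overline{\f_1}$, while $\iint_{T_\tau}\w_{T_\tau}=\im(\tau)$. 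The numerator is then $2\cdot\i\iint_{C_z}\f_1\wedge\overline{\f_1}=2\im(\tau)$, where the value $\i\iint_{C_z}\f_1\wedge\overline{\f_1}=\im(\tau)$ is precisely the positivity computation already carried out in the proof of Theorem \ref{th:period-matrix}. Thus $\deg\jmath_{\L_1}=2$, and in particular $\jmath_{\L_1}$ is non-constant. For $\jmath_{\L_2}$ I would invoke \eqref{eq:rel-jmaths} in the form $\jmath_{\L_2}=[-1]\circ\jmath_{\L_1}\circ\sigma$; since both $[-1]\colon T_\tau\to T_\tau$ and $\sigma\colon C_z\to C_z$ are isomorphisms, $\deg\jmath_{\L_2}=\deg\jmath_{\L_1}=2$.

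For generic injectivity, since $\deg\jmath_{\L_i}=2$ each $\jmath_{\L_i}$ is a double cover, carrying a deck involution $\iota_i$ on $C_z$, and for generic $P$ the only point $Q\ne P$ with $\jmath_{\L_1}(Q)=\jmath_{\L_1}(P)$ is $Q=\iota_1(P)$. Hence $\jmath_\L=(\jmath_{\L_1},\jmath_{\L_2})$ is generically injective if and only if $\iota_1\ne\iota_2$. Using \eqref{eq:rel-jmaths} one checks $\jmath_{\L_2}(Q)=\jmath_{\L_2}(P)\iff\jmath_{\L_1}(\sigma Q)=\jmath_{\L_1}(\sigma P)\iff Q=\sigma^{-1}\iota_1\sigma(P)$, so $\iota_2=\sigma^{-1}\iota_1\sigma$, and the condition $\iota_1\ne\iota_2$ is equivalent to $\iota_1$ not commuting with $\sigma$. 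I would then prove this non-commutation through the action on $1$-forms. By Proposition \ref{prop:images}, under $\jmath_{\L_1}$ the points $P_0,P_1$ both lie over $0\in T_\tau$ (as $1\equiv0$ in $T_\tau$) and $P_{1/z},P_\infty$ both lie over $\frac{\tau+1}{2}$; these being complete (unramified) fibres of the double cover forces $\iota_1(P_0)=P_1$ and $\iota_1(P_{1/z})=P_\infty$. Comparing zero divisors via \eqref{eq:zero-div} gives $[\iota_1^*\eta_1]=\iota_1(2P_1+2P_{1/z})=2P_0+2P_\infty=[\eta_2]$, so $\iota_1^*\eta_1=c\,\eta_2$ with $c\in\C^\times$. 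If $\iota_1$ commuted with $\sigma$, then $\iota_1^*$ and $\sigma^*$ would commute, and using $\sigma^*\eta_1=-\i\eta_1$, $\sigma^*\eta_2=\i\eta_2$ from Proposition \ref{prop:sigma-action} we would get $-\i c\,\eta_2=\iota_1^*(\sigma^*\eta_1)=\sigma^*(\iota_1^*\eta_1)=\i c\,\eta_2$, a contradiction. Hence $\iota_1$ does not commute with $\sigma$, so $\iota_1\ne\iota_2$ and $\jmath_\L$ is generically injective.

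The main obstacle is the injectivity part rather than the degree part: the degree computation is routine once the Hermitian pairing of Theorem \ref{th:period-matrix} is reused, whereas the injectivity requires correctly pinning down the deck involution on the four ramification points $P_0,P_1,P_{1/z},P_\infty$ and then ruling out $\iota_1=\iota_2$. The eigenform comparison $\iota_1^*\eta_1=c\,\eta_2$ is the crux: it shows that $\iota_1$ must interchange the two $\sigma$-eigenlines spanned by $\eta_1$ and $\eta_2$, which is exactly what an automorphism commuting with $\sigma$ cannot do.
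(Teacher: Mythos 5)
Your proposal is correct, but both halves are proved by a route genuinely different from the paper's. For the degree, the paper counts the fibre of $\jmath_{\L_i}$ over $0\in T_\tau$ directly: it shows $\jmath_{\L_i}^{-1}(0)=\{P_0,P_1\}$ (because the $(1-\sigma^2)$-image of a path from $P_0$ to any other point does not lie in $\L$) and that neither point is a ramification point, since $\f_i(P_0),\f_i(P_1)\ne 0$ by \eqref{eq:ex-dual-basis} and \eqref{eq:zero-div}; your integral formula $\deg\jmath_{\L_1}=\big(\iint_{C_z}2\i\,\f_1\wedge\overline{\f_1}\big)/\im(\tau)=2$, which recycles the positivity computation from the proof of Theorem \ref{th:period-matrix}, is softer but equally valid, and \eqref{eq:rel-jmaths} transfers the degree to $\jmath_{\L_2}$ just as you say. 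For generic injectivity the divergence is larger: the paper shows that $\f_1$ and $\f_2$ have no common zero, so that $\jmath_\L$ onto the normalization of its image is an unramified covering, and then eliminates the degree-$2$ possibility by Hurwitz (a genus-$2$ intermediate curve would force a degree-$1$ map onto the genus-$1$ curve $T_\tau$). You instead introduce the deck involutions $\iota_1,\iota_2$ of the two degree-$2$ projections, reduce generic injectivity to $\iota_1\ne\iota_2$, i.e.\ to $\iota_1$ not commuting with $\sigma$, and obtain the non-commutation from $\iota_1^*\eta_1=c\,\eta_2$ (forced by $\iota_1(P_1)=P_0$, $\iota_1(P_{1/z})=P_\infty$ and the divisors \eqref{eq:zero-div}) against the distinct $\sigma^*$-eigenvalues $-\i$ and $\i$ of $\eta_1,\eta_2$; every input you use (the fibres $\{P_0,P_1\}$ and $\{P_{1/z},P_\infty\}$ from Proposition \ref{prop:images}, the eigenform data of Proposition \ref{prop:sigma-action}) is already available, and each step is sound. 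Your $\iota_1$ is in fact the involution $\iota$ constructed in the Appendix (which realizes $C_z/\la\iota\ra\simeq T_\tau$ and satisfies $\iota\sigma=\sigma^{-1}\iota$), so your argument anticipates that structure. The one thing the paper's proof yields that yours does not is the explicit disjointness of the ramification loci of $\jmath_{\L_1}$ and $\jmath_{\L_2}$ (no common zero of $\f_1,\f_2$), a fact quoted later in the proof of Corollary \ref{cor:ram-pts}; it is not needed for the proposition itself.
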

\begin{proof}

By Proposition \ref{prop:images}, the points $P_0,P_1\in C_z$ 
belong to the preimage of $(0,0)\in T_\tau\oplus T_\tau$ 
under of the map $\jmath_{\L}$. 
Any different point $P$ from them is never mapped $(0,0)$ since 
the $(1-\sigma^2)$-image of any path from $P_0$ to $P$ does not 
belong to $\L$. Thus we have 
$$\jmath_{\L}^{-1}(0,0)=\{P_0,P_1\}.$$
By regarding $\jmath_{\L_2}$  as the $(-\sigma)$-action of $\jmath_{\L_1}$ 
as in (\ref{eq:rel-jmaths}), 
we see that 
$$\jmath_{\L_1}(P)=0 \Leftrightarrow \jmath_{\L_2}(P)=0.$$ 
Thus the preimage of $0\in T_\tau$ under $\jmath_{\L_i}$ ($i=1,2$) 
is $\{P_0,P_1\}$. We show that these points are different from 
ramification points of $\jmath_{\L_i}$. Here note that $P\in C_z$ is 
a ramification point of $\jmath_{\L_i}$ if and only if $\f_i(P)=0$. 
By substituting $P_0$ and $P_1$ in 
the expressions (\ref{eq:ex-dual-basis}) of $\f_i$,
we have 
$$\f_i(P_0)=
\frac{\eta_1(P_0)}{\int_{\b_i} \eta_1}\ne 0,\quad 
\f_i(P_1)=
\frac{\eta_2(P_1)}{\int_{\b_i} \eta_2}\ne 0,
$$
by the facts (\ref{eq:zero-div}) 
about the zero divisors of $\eta_1$ and $\eta_2$ given 
in Proposition \ref{prop:sigma-action}.
Hence we conclude that $\jmath_{\L_i}$ is of degree $2$. 

By comparing the genus of $C_z$ with that of $T_\tau$ by Hurwitz's formula, 
we see that there exist four ramification points of index $2$ for 
$\jmath_{\L_i}$. 
We show that there is no intersection 
between the set of zeros of $\f_1$ and that of $\f_2$. 
Suppose that $P$ is in this intersection. 
By substituting $P$ in (\ref{eq:ex-dual-basis}),  
we have 
$$\f_1(P)=
\frac{\eta_1(P)}{\int_{\b_1} \eta_1}+\frac{\eta_2(P)}{\int_{\b_1} \eta_2}
=0, \quad 
\f_2(P)=
\frac{\eta_1(P)}{\int_{\b_2} \eta_1}+\frac{\eta_2(P)}{\int_{\b_2} \eta_2}
=\i\frac{\eta_1(P)}{\int_{\b_1} \eta_1}-\i\frac{\eta_2(P)}{\int_{\b_1} \eta_2}
=0.
$$
These mean that $\eta_1(P)=\eta_2(P)=0$,  
which contradict (\ref{eq:zero-div}) 
in Proposition \ref{prop:sigma-action}.

By making the resolution of $\jmath_\L(C_z)$ if necessary, 
we regard $\jmath_{\L}$ as a map from $C_z$ to a compact Riemann surface
$\widehat{\jmath_\L(C_z)}$. 
Since there is no common zero of $\f_1$ and $\f_2$, 
this map is a covering without ramification points. 
By Hurwitz's formula, we have two possibilities: 
\begin{itemize}
\item[(1)] $\widehat{\jmath_\L(C_z)}$ is of genus $2$ and 
$\jmath_\L$ is of degree $2$,
\item[(2)] $\widehat{\jmath_\L(C_z)}$ is of genus $3$ and 
$\jmath_\L$ is of degree $1$.
\end{itemize}
In case of (1), 
the projection from $\widehat{\jmath_\L(C_z)}$ to $T_\tau$ 
is of degree $1$ since $\jmath_{\L_i}$ is of degree $2$. 
However, it contradicts the fact that $T_\tau$ is of genus $1$.  
Hence the case (2) happens.
Note that not only $\jmath_{\L}(P_0)$ and $\jmath_{\L}(P_1)$ but 
also $\jmath_{\L}(P_{1/z})$ and $\jmath_{\L}(P_\infty)$ 
are separated in $\widehat{\jmath_\L(C_z)}$.
\end{proof}

\section{Relations between $[f_1(x),\dots,f_4(x)]\in \P^3$ and 
$(\jmath_{\L}(P),\tau)\in J_{\L}(C_z)\times \H$}
\label{sec:relation}
Suppose that $(x_1,x_2)\in \dot U\cap \R^2$ and that
$$z=\frac{1-x_1-x_2}{(1-x_1)(1-x_2)}$$
is fixed. Note that 
\begin{equation}
\label{eq:assum-x1-x2}
0< x_1<1,\ 0< x_2<1,\ 0< 1-x_1-x_2<1,\ 0<z<1.
\end{equation}
Let $P_{1-x_i}$ $(i=1,2)$ be the point of 
$C_z:w^4=v^3(1-v)(1-vz)$ with $v=1-x_i$ and positive real $w$-coordinate. 
We relate 
$\tau=\int_{\b_1}\f_1 \in \H$, $\jmath_{\L}(P_{1-x_1})$,
$\jmath_{\L}(P_{1-x_2})$ 
to the integrals $f_1(x),\dots,f_4(x)$ in 
(\ref{eq:periods}).

\begin{theorem}
\label{th:rel-Prym-Periods}
We have 
\begin{align}
\label{eq:tau-express}
\tau&=-\frac{f_1(x)}{f_2(x)}-\i,\\
\label{eq:P(1-x1)-express}
\jmath_{\L}(P_{1-x_1})
&=
\big( \frac{1-\i}{4}\cdot \frac{f_3(x)-f_4(x)}{f_2(x)}+\frac{1}{2},
 \frac{1+\i}{4}\cdot \frac{f_3(x)+f_4(x)}{f_2(x)}-\frac{\i}{2}
\big),\\
\label{eq:P(1-x2)-express}
\jmath_{\L}(P_{1-x_2})
&=
\big( \frac{1-\i}{4}\cdot  \frac{f_4(x)-f_3(x)}{f_2(x)}+\frac{1}{2},
 \frac{1+\i}{4}\cdot  \frac{f_3(x)+f_4(x)}{f_2(x)}-\frac{\i}{2}
\big).
\end{align}
\end{theorem}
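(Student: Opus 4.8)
The plan is to realize every period $f_i(x)$ in \eqref{eq:periods} as a path integral of the holomorphic form $\eta_1=dv/w$ over one of the distinguished arcs $I_{\ast,\ast}$, and then to read off $\tau$ and $\jmath_\L(P_{1-x_i})$ from Theorem \ref{th:period-matrix}, Corollary \ref{cor:ex-dual-basis} and Lemma \ref{lem:paths}. Since the integrands in \eqref{eq:periods} are exactly $|dv/w|$ and $\eta_1$ spans the $(-\i)$-eigenspace of $\sigma^\ast$, the first step is branch bookkeeping: on each arc one records $\arg v,\arg(1-v),\arg(1-vz)$ under the normalization $-\pi/2<\arg\le 3\pi/2$, determines $\arg w$, and thereby writes $f_i=c_i\,\kappa\int_{I_{\ast,\ast}}\eta_1$ with an explicit constant $c_i$ (a power of $\ex(1/8)$, possibly times $1/\sqrt2$) and $\kappa=B(\tfrac14,\tfrac14)/\sqrt[4]{(1-x_1)(1-x_2)}$. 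I expect this bookkeeping --- in particular pinning down the sheet on which $I_{1/z,\infty}$ lies, which fixes otherwise elusive signs --- to be the single most error-prone part, and I would cross-check it against the reality constraints ($f_1,f_3,f_4>0$ and $f_2\in\i\R_{>0}$ for $x\in(0,1)^2$, equivalently $B:=\int_{\b_1}\eta_1>0$).

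For \eqref{eq:tau-express} I would use $\sigma^\ast\eta_1=-\i\eta_1$, so that $\int_{\sigma\gamma}\eta_1=-\i\int_\gamma\eta_1$ and, since $(\sigma^2)^\ast\eta_1=-\eta_1$, also $\int_{(1-\sigma^2)\gamma}\eta_1=2\int_\gamma\eta_1$. Feeding in $(1-\sigma^2)I_{1/z,\infty}=-\b_2$ and $(1-\sigma^2)I_{-\infty,0}=\tfrac12(-\a_1-\a_2-\b_1+\b_2)$ from Lemma \ref{lem:paths}, together with $\int_{\a_2}\eta_1=-\i\int_{\a_1}\eta_1$ and $\int_{\b_2}\eta_1=-\i\int_{\b_1}\eta_1$, one obtains $f_2$ as a multiple of $B$ and $f_1$ as a combination of $A:=\int_{\a_1}\eta_1$ and $B$. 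Dividing and invoking $\tau=A/B$ from Theorem \ref{th:period-matrix} collapses everything to $\tau=-f_1/f_2-\i$.

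For \eqref{eq:P(1-x1)-express} and \eqref{eq:P(1-x2)-express} I would use Corollary \ref{cor:ex-dual-basis} together with $\int_{\b_2}\eta_1=-\i\int_{\b_1}\eta_1$, $\int_{\b_2}\eta_2=\i\int_{\b_1}\eta_2$ to write, for any $P$,
\[
\jmath_{\L_1}(P)=\frac{\int_{P_0}^{P}\eta_1}{\int_{\b_1}\eta_1}+\frac{\int_{P_0}^{P}\eta_2}{\int_{\b_1}\eta_2},\qquad
\jmath_{\L_2}(P)=\i\Big(\frac{\int_{P_0}^{P}\eta_1}{\int_{\b_1}\eta_1}-\frac{\int_{P_0}^{P}\eta_2}{\int_{\b_1}\eta_2}\Big).
\]
Taking $P=P_{1-x_1}$ with the real arc $I_{0,1-x_1}$, the $\eta_1$-term is immediately $\tfrac{1-\i}{4}\,f_3/f_2$ by the first step (and $\tfrac{1-\i}{4}\,f_4/f_2$ for $P_{1-x_2}$), so the whole problem reduces to evaluating the $\eta_2$-term.

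Here the key structural input is the automorphism
\[
\phi:(v,w)\mapsto\Big(\tfrac{1-v}{1-vz},\ \tfrac{\sqrt{1-z}\,v(1-v)}{w(1-vz)}\Big)
\]
of $C_z$, which swaps $P_0\leftrightarrow P_1$, $P_{1/z}\leftrightarrow P_\infty$ and $P_{1-x_1}\leftrightarrow P_{1-x_2}$ --- the last because the identity $1-z=\tfrac{x_1x_2}{(1-x_1)(1-x_2)}$ forces $1-x_2=\tfrac{1-(1-x_1)}{1-z(1-x_1)}$ --- and which satisfies $\phi^\ast\eta_1=-\sqrt{1-z}\,\eta_2$; this $\phi$ is exactly what is encoded in the second ($\eta_2$-) representations of $f_3,f_4$ in Proposition \ref{prop:a=c1=c2}. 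Pushing $I_{0,1-x_1}$ forward by $\phi$ turns $\int_{P_0}^{P_{1-x_1}}\eta_2$ into $-(1-z)^{-1/2}\int_{P_1}^{P_{1-x_2}}\eta_1$, and applied to $I_{0,1}$ it yields the period relation $\int_{\b_1}\eta_2=(1-z)^{-1/2}\int_{\b_1}\eta_1$. Dividing, the factor $\sqrt{1-z}$ cancels and the $\eta_2$-term becomes $\tfrac12-\tfrac{1-\i}{4}\,f_4/f_2$ for $P_{1-x_1}$ (and $\tfrac12-\tfrac{1-\i}{4}\,f_3/f_2$ for $P_{1-x_2}$), the constant $\tfrac12$ being the $\int_{I_{0,1}}\eta_2/\int_{\b_1}\eta_2$ contribution and matching $\jmath_\L(P_1)=(1,0)$ from Proposition \ref{prop:images}. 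Substituting the two terms into the displayed formulas and using $\i\cdot\tfrac{1-\i}{4}=\tfrac{1+\i}{4}$ produces \eqref{eq:P(1-x1)-express}--\eqref{eq:P(1-x2)-express}, all equalities understood modulo the lattice $\Z\tau+\Z$. The delicate points I would watch are the homology class of $\phi(I_{0,1-x_1})$ relative to $I_{0,1-x_2}-I_{0,1}$ (a sheet check) and the choice of the positive branch of $\sqrt{1-z}$ on $z\in(0,1)$ in $\phi$.
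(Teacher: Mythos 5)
Your proposal is correct and follows essentially the same route as the paper: both reduce everything to $\eta_1$-periods over the arcs $I_{\ast,\ast}$ via Lemma \ref{lem:paths} and Theorem \ref{th:period-matrix}, decompose $2\f_1,2\f_2$ by Corollary \ref{cor:ex-dual-basis}, and convert the $\eta_2$-integrals to $\eta_1$-integrals by the substitution $v\mapsto(1-v)/(1-vz)$ (your automorphism $\phi$ is exactly the involution $\iota$ of the Appendix, and the paper likewise notes that the $\sqrt{1-z}$ factors cancel between numerator and denominator). The only cosmetic difference is that you make the pullback factor $\phi^\ast\eta_1=-\sqrt{1-z}\,\eta_2$ and the point swaps $P_0\leftrightarrow P_1$, $P_{1-x_1}\leftrightarrow P_{1-x_2}$ explicit, where the paper treats this as a change of variables citing Proposition \ref{prop:a=c1=c2}.
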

\begin{proof}
By Lemma \ref{lem:paths},  the ratios $f_1(x)/ f_2(x)$ and 
$f_3(x)/ f_2(x)$ are expressed as
$$\frac{f_1(x)}{f_2(x)}=
\frac{\int_{P_{\infty}}^{P_0} \ex(3/8)\eta_1}{(\i/\sqrt2) 
\int_{P_1}^{P_{1/z}} \ex(-1/4) 
\eta_1}=(-1+\i)\frac{\int_{-\a_1-\a_2-\b_1+\b_2} \eta_1}{2\int_{-\b_2 }\eta_1}
=-\tau-\i,
$$
$$\frac{f_3(x)}{f_2(x)}=
\frac{\ex(3/8)\int_{P_0}^{P_{1-x_1}} \eta_1}{(\i/\sqrt2) 
\int_{P_1/z}^{P_{\infty}}\ex(-1/4) \eta_1} =
\frac{(-1+\i)\int_{P_0}^{P_{1-x_1}} \eta_1}{\int_{-\b_2/2}\eta_1}
=
\frac{2(1+\i)\int_{P_0}^{P_{1-x_1}} \eta_1}{\int_{\b_1}\eta_1}.
$$
Thus we have the expression of $\tau$ by $f_1(x),f_2(x)$,  
and 
$$
\frac{\int_{P_0}^{P_{1-x_1}} \eta_1}{\int_{\b_1}\eta_1}=
\frac{1-\i}{4}\cdot \frac{f_3(x)}{f_2(x)},
\quad 
\frac{\int_{P_0}^{P_{1-x_2}} \eta_1}{\int_{\b_1}\eta_1}=
\frac{1-\i}{4}\cdot \frac{f_4(x)}{f_2(x)}.
$$
By Corollary  \ref{cor:ex-dual-basis}, we have
\begin{align*}
\int_{P_0}^{P_{1-x_1}}2\f_1&=
\frac{\int_{P_0}^{P_{1-x_1}} \eta_1}{\int_{\b_1}\eta_1}+
\frac{\int_{P_0}^{P_{1-x_1}} \eta_2}{\int_{\b_1}\eta_2}
=\frac{\int_{P_0}^{P_{1-x_1}} \eta_1}{\int_{\b_1}\eta_1}+
\frac{\int_{P_1}^{P_{1-x_2}} \eta_1}{\int_{-\b_1}\eta_1}\\
&=\frac{\int_{P_0}^{P_{1-x_1}} \eta_1}{\int_{\b_1}\eta_1}
-\frac{\int_{P_0}^{P_{1-x_2}} \eta_1-\int_{P_0}^{P_1} \eta_1}{\int_{\b_1}\eta_1}
=\frac{1-\i}{4}\frac{f_3(x)}{f_2(x)}
-\big(\frac{1-\i}{4}\frac{f_4(x)}{f_2(x)}-\frac{1}{2}\big),
\\
\int_{P_0}^{P_{1-x_1}}2\f_2&=
\frac{\int_{P_0}^{P_{1-x_1}} \eta_1}{\int_{\b_2}\eta_1}+
\frac{\int_{P_0}^{P_{1-x_1}} \eta_2}{\int_{\b_2}\eta_2}
=
\frac{\int_{P_0}^{P_{1-x_1}} \eta_1}{\int_{\sigma(\b_1)}\eta_1}+
\frac{\int_{P_0}^{P_{1-x_1}} \eta_2}{\int_{\sigma(\b_1)}\eta_2}\\
&=
\frac{\int_{P_0}^{P_{1-x_1}} \eta_1}{-\i\int_{\b_1}\eta_1}
+\frac{\int_{P_0}^{P_{1-x_1}} \eta_2}{\i \int_{\b_1}\eta_2}
=\frac{1+\i}{4}\frac{f_3(x)}{f_2(x)}
+\big(\frac{1+\i}{4}\frac{f_4(x)}{f_2(x)}-\frac{\i}{2}\big),
\end{align*}
where we use the variable change $v_1\mapsto \dfrac{1-v_1}{1-v_1z}$
to convert the integral of $\eta_2$ into that of $\eta_1$. 
Here note that $\eta_2$ is transformed into $\eta_1$ times some factors 
by this variable change as in identities for $\iint_{D_4}u(t,x)dt$ 
in Proposition \ref{prop:a=c1=c2}.
However, these factors are canceled since they appear in numerator and 
denominator.
To obtain (\ref{eq:P(1-x2)-express}), exchange $x_1$ and $x_2$ for 
(\ref{eq:P(1-x1)-express}).
\end{proof}

\begin{cor}
\label{cor:preimages}
Under the assumption \eqref{eq:assum-x1-x2},  
the maps $\jmath_{\L_1}$ and $\jmath_{\L_2}$ satisfy 
\begin{equation}
\label{eq:symmetry}
\jmath_{\L_1}(P_{1-x_1})=\jmath_{\L_1}(\sigma^2(P_{1-x_2})),\quad 
\jmath_{\L_2}(P_{1-x_1})=\jmath_{\L_2}(P_{1-x_2}).
\end{equation}
\end{cor}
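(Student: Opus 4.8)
The plan is to deduce both identities directly from the explicit formulas \eqref{eq:P(1-x1)-express} and \eqref{eq:P(1-x2)-express} of Theorem \ref{th:rel-Prym-Periods}, using that $\jmath_\L(P)=(\jmath_{\L_1}(P),\jmath_{\L_2}(P))$. The second identity requires no work at all: the second components of $\jmath_\L(P_{1-x_1})$ and $\jmath_\L(P_{1-x_2})$ appearing in those two formulas are literally the same expression $\frac{1+\i}{4}\cdot\frac{f_3(x)+f_4(x)}{f_2(x)}-\frac{\i}{2}$, so $\jmath_{\L_2}(P_{1-x_1})=\jmath_{\L_2}(P_{1-x_2})$ holds as an equality of complex numbers, a fortiori in $T_\tau$.

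For the first identity the key preliminary step, which I regard as the main (if modest) point of the argument, is to record how $\jmath_{\L_1}$ transforms under $\sigma^2$. First I would observe that $P_0=(0,0)$ is a fixed point of $\sigma$, hence of $\sigma^2$, so that for any path $\overrightarrow{P_0P}$ its image $\sigma^2\cdot\overrightarrow{P_0P}$ is again a path from $P_0$ to $\sigma^2(P)$. Invoking $(\sigma^2)^*\f_1=-\f_1$, which holds because $\f_1\in H^0_{-\sigma^2}(C_z,\W)$, I would then compute
$$
\jmath_{\L_1}(\sigma^2(P))=\int_{\sigma^2\cdot\overrightarrow{P_0P}}2\f_1
=\int_{\overrightarrow{P_0P}}2(\sigma^2)^*\f_1=-\int_{\overrightarrow{P_0P}}2\f_1=-\jmath_{\L_1}(P)
$$
as an identity in $T_\tau$. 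The only care needed here is with the direction of the pullback $\int_{\sigma^2\cdot\gamma}\omega=\int_\gamma(\sigma^2)^*\omega$ and with well-definedness modulo $\L$, both of which are already built into the setup of Section \ref{sec:abel-Jacobi}.

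With this relation established, the desired equality $\jmath_{\L_1}(P_{1-x_1})=\jmath_{\L_1}(\sigma^2(P_{1-x_2}))=-\jmath_{\L_1}(P_{1-x_2})$ is equivalent to $\jmath_{\L_1}(P_{1-x_1})+\jmath_{\L_1}(P_{1-x_2})\equiv 0$ in $T_\tau$. Reading the first components off \eqref{eq:P(1-x1)-express} and \eqref{eq:P(1-x2)-express} and adding, the terms $\frac{1-\i}{4}\cdot\frac{f_3(x)-f_4(x)}{f_2(x)}$ and $\frac{1-\i}{4}\cdot\frac{f_4(x)-f_3(x)}{f_2(x)}$ cancel, leaving only $\frac12+\frac12=1$. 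Since $1\equiv 0$ in $T_\tau=\C/(\Z\tau+\Z)$, this yields exactly the required congruence and finishes the proof. I expect no genuine obstacle: the assumption \eqref{eq:assum-x1-x2} enters solely through Theorem \ref{th:rel-Prym-Periods}, ensuring that $P_{1-x_1},P_{1-x_2}$ are the real points used there, and everything else is the transformation rule above together with the cancellation in $T_\tau$.
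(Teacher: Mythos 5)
Your proof is correct and follows essentially the same route as the paper: the paper's own argument likewise combines the relation $\jmath_{\L_1}(\sigma^2(P))=-\jmath_{\L_1}(P)$ (coming from $(\sigma^2)^*\f_i=-\f_i$) with the explicit formulas \eqref{eq:P(1-x1)-express} and \eqref{eq:P(1-x2)-express}, the only arithmetic point being $-1/2\equiv 1/2$ in $T_\tau$. You merely spell out the change-of-variables and well-definedness details that the paper leaves implicit.
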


\begin{proof}
We have 
$$\jmath_{\L_1}(\sigma^2(P))=-\jmath_{\L_1}(P)$$
for $P\in C_z$ by $\sigma^2(\f_i)=-\f_i$ $(i=1,2)$, and $-1/2$  is 
equal to $1/2$ as elements of $T_\tau$. 
The identities in (\ref{eq:symmetry}) follow by the equalities 
(\ref{eq:P(1-x1)-express}) and (\ref{eq:P(1-x2)-express}). 
\end{proof}

\begin{cor}
\label{cor:ram-pts}
Under the assumption \eqref{eq:assum-x1-x2},  
the ramification points of $\jmath_{\L_1}$ and those of $\jmath_{\L_2}$ are
$P_{v_+}$, $\sigma^2(P_{v_+})$, $\sigma(P_{v_-})$, $\sigma^3(P_{v_-})$ 
and 
$\sigma(P_{v_+})$, $\sigma^3(P_{v_+})$, $P_{v_-}$, $\sigma^2(P_{v_-})$, 
respectively, where 
$$v_\pm=\frac{1\pm\sqrt{1-z}}{z},$$
$P_{v_-}$ is the point of $C_z$ with $v=v_-$ and positive real $w$-coordinate, 
$P_{v_+}$ is the point of $C_z$ with $v=v_+$ and $\arg(w)=-\frac{\pi}{2}$.
\end{cor}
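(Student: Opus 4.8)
The plan is to combine two facts from the proof of Proposition \ref{prop:inje-jmath-L}: that $P\in C_z$ is a ramification point of $\jmath_{\L_i}$ exactly when $\f_i(P)=0$, and that $\f_i$ is the fixed linear combination of $\eta_1,\eta_2$ given in Corollary \ref{cor:ex-dual-basis}. Since $\eta_1/\eta_2=w^2/v^2$ as a meromorphic function on $C_z$, away from the branch points of $\pr$ the equation $\f_i=0$ becomes
$$\frac{w^2}{v^2}=-\frac{\int_{\b_i}\eta_1}{\int_{\b_i}\eta_2}.$$
So I would reduce the whole statement to (a) computing the period ratio $\rho=\int_{\b_1}\eta_1/\int_{\b_1}\eta_2$, (b) solving the equation above together with $w^4=v^3(1-v)(1-vz)$, and (c) matching the resulting four points with the $\sigma$-translates of $P_{v_\pm}$.

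For (a), note $\b_1=(1-\sigma^2)I_{0,1}$ by Proposition \ref{prop:Lambda} and $(\sigma^2)^*\eta_j=-\eta_j$ by Proposition \ref{prop:sigma-action}, so $\int_{\b_1}\eta_j=2\int_{I_{0,1}}\eta_j$ and hence $\rho=\int_{I_{0,1}}\eta_1/\int_{I_{0,1}}\eta_2$. On $I_{0,1}$ the coordinate $v$ runs over $(0,1)$ with $w>0$ real, and the two path integrals are Euler-type integrals for $\cF$:
$$\int_{I_{0,1}}\eta_1=B(\frac{1}{4},\frac{3}{4})F(\frac{1}{4},\frac{1}{4},1;z),\qquad \int_{I_{0,1}}\eta_2=B(\frac{3}{4},\frac{1}{4})F(\frac{3}{4},\frac{3}{4},1;z).$$
This is the substantive step: I would invoke Euler's transformation $F(\frac{1}{4},\frac{1}{4},1;z)=(1-z)^{1/2}F(\frac{3}{4},\frac{3}{4},1;z)$, which gives $\rho=\sqrt{1-z}$, the positive root being pinned down by the branch $w>0$ on $I_{0,1}$.

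For (b), substituting $w^2=-\sqrt{1-z}\,v^2$ into $w^4=v^3(1-v)(1-vz)$ gives $(1-v)(1-vz)=(1-z)v$, i.e. $zv^2-2v+1=0$, whose roots are exactly $v_\pm=\frac{1\pm\sqrt{1-z}}{z}$. For each root $w^2=-\sqrt{1-z}\,v_\pm^2$ is negative real, so $w$ is purely imaginary; this produces four points (two over each $v_\pm$), consistent with the count $2g-2=4$ of zeros of a holomorphic $1$-form on the genus-$3$ curve. For (c) I would track $\arg(w)$ under $\sigma:w\mapsto\i w$: over $v_+$ the two points are $P_{v_+}$ ($\arg w=-\pi/2$) and $\sigma^2(P_{v_+})$ ($\arg w=\pi/2$), while over $v_-$, since the real branch $P_{v_-}$ has $w>0$, the two imaginary points are the quarter-turns $\sigma(P_{v_-})$ and $\sigma^3(P_{v_-})$. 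This yields the ramification points of $\jmath_{\L_1}$.

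Finally, for $\jmath_{\L_2}$ I would use the relation $\jmath_{\L_2}(P)=-\jmath_{\L_1}(\sigma\cdot P)$ from \eqref{eq:rel-jmaths}: its ramification locus is the $\sigma^{-1}$-image of that of $\jmath_{\L_1}$, and applying $\sigma^{-1}=\sigma^3$ to the four points above reproduces $\sigma(P_{v_+}),\sigma^3(P_{v_+}),P_{v_-},\sigma^2(P_{v_-})$. Equivalently, one can recompute directly: since $\b_2=\sigma(\b_1)$ and $\sigma^*\eta_1=-\i\eta_1$, $\sigma^*\eta_2=\i\eta_2$, the relevant ratio flips sign to $-\sqrt{1-z}$, so the zeros of $\f_2$ satisfy $w^2=+\sqrt{1-z}\,v^2$ (real $w$) over the same $v_\pm$, giving the same list. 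The only genuinely non-formal ingredient is the period ratio $\rho=\sqrt{1-z}$; I expect the reduction to the hypergeometric integrals and the appeal to Euler's transformation to be the main obstacle, the remainder being bookkeeping of the cyclic action $\sigma$ on $\arg(w)$.
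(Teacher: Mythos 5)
Your proposal is correct, but it reaches the conclusion by a genuinely different route than the paper. The paper first derives the quadratic $zv^2-2v+1=0$ from the symmetry $\jmath_{\L_2}(P_{1-x_1})=\jmath_{\L_2}(P_{1-x_2})$ of Corollary \ref{cor:preimages} (ramification occurs where the two sheets of the degree-$2$ map collide), and then decides \emph{which} of the four points of $\pr^{-1}(v_\pm)$ are ramification points by positivity of $v_-\in(0,1)$, by tracking how the coefficients in \eqref{eq:ex-dual-basis} change under $\sigma^j$, and by a somewhat delicate continuation of $\f_2$ along a path from $v_-$ to $v_+$ through the upper half $v$-plane to find $\arg(w)=-\pi/2$ at $P_{v_+}$. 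You instead solve $\f_1=0$ head-on: using $\eta_1/\eta_2=w^2/v^2$ you reduce everything to the single period ratio $\int_{\b_1}\eta_1/\int_{\b_1}\eta_2$, evaluate it as $\sqrt{1-z}$ via the Euler integrals $B(\frac14,\frac34)F(\frac14,\frac14,1;z)$ and $B(\frac34,\frac14)F(\frac34,\frac34,1;z)$ together with Euler's transformation $F(\frac14,\frac14,1;z)=(1-z)^{1/2}F(\frac34,\frac34,1;z)$, and then the condition $w^2=\mp\sqrt{1-z}\,v^2$ simultaneously yields the same quadratic \emph{and} pins down $\arg(w)\in\{\pm\pi/2\}$ (resp.\ $\{0,\pi\}$) at the zeros of $\f_1$ (resp.\ $\f_2$), so no path-tracing is needed. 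The cost is the extra classical input (Euler's transformation); the benefit is that the identification of the correct $\sigma$-translates becomes pure bookkeeping. Two small points to tie off: your reduction to $w^2/v^2=-\rho$ is only valid away from $P_0,P_1,P_{1/z},P_\infty$, so you should note that $\f_i$ cannot vanish there because by \eqref{eq:zero-div} exactly one of $\eta_1,\eta_2$ vanishes at each of these points; with that, your four solutions exhaust the $2g-2=4$ zeros of the holomorphic $1$-form $\f_i$, which you already invoke. Your two alternative derivations for $\jmath_{\L_2}$ (via $\jmath_{\L_2}(P)=-\jmath_{\L_1}(\sigma\cdot P)$, or via $\int_{\b_2}\eta_j=(\mp\i)\int_{\b_1}\eta_j$ flipping the sign of the ratio) both check out and agree with the paper's use of $\sigma^*(\f_2)=\f_1$.
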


\begin{proof}
We consider ramification points of $\jmath_{\L_2}$. 
By $\jmath_{\L_2}(P_{1-x_1})=\jmath_{\L_2}(P_{1-x_2})$, 
the $v$-coordinates of the ramification points of $\jmath_{\L_2}$ should 
satisfy the equation 
$$zv^2-2v+1=0$$
obtained by the substitution $1-x_1=1-x_2=v$ into 
$$\frac{1-x_1-x_2}{(1-x_1)(1-x_2)}=z.$$
Its solutions are $v_\pm$. Since $v_-$ belongs to the open interval 
$(0,1)$ under the condition \eqref{eq:assum-x1-x2}, 
$P_{v_-}$ is a ramification point of $\jmath_{\L_2}$. 
Note that the $1$-form $\f_2$ vanishes at $P_{v_-}$.  
By acting $\sigma^2$ on \eqref{eq:ex-dual-basis} in Corollary 
\ref{cor:ex-dual-basis},
we see that $\f_2$ vanishes at $\sigma^2(P_{v_-})$.   
Thus $\sigma^2(P_{v_-})$ is a ramification point of $\jmath_{\L_2}$. 
On the other hand, 
$\f_2$ does not vanish at $\sigma(P_{v_-})$ and $\sigma^3(P_{v_-})$ 
by the changes of coefficients under the actions $\sigma$ and $\sigma^3$ 
on \eqref{eq:ex-dual-basis},  
these points are different from a ramification point of $\jmath_{\L_2}$.

Note that $v_+$ belongs to the open interval $(1/z,\infty)$.
We consider whether $\f_2$ vanishes at $P_{v_+}$ by 
tracing $\f_2$ along the lift of a path from $v_-$ to $v_+$ via 
the upper-half space of the $v$-space. Since both of the arguments of $1-v$ and 
$1-vz$ decrease by $\pi$, we have 
$$\arg(w)=-\frac{\pi}{2},\quad
\arg(\frac{1}{w})=\frac{\pi}{2},\quad \arg(\frac{v^2}{w^3})=\frac{3\pi}{2},
$$ 
at $P_{v_+}$. Thus $\f_2$ does not vanish at $P_{v_+}$, and 
this point is not a ramification point of $\jmath_{\L_2}$.
By acting $\sigma$ on $\f_2$ several times, we see that 
$\sigma(P_{v_+})$ and $\sigma^3 (P_{v_+})$ are ramification points of 
$\jmath_{\L_2}$, and that $P_{v_+}$ and $\sigma^2 (P_{v_+})$ are not.
Hence we have the four ramification points of $\jmath_{\L_2}$  
as shown in Proof of Proposition \ref{prop:inje-jmath-L}.

Since $\sigma^*(\f_2)=\f_1$, we obtain 
the assertion for ramification points of $\jmath_{\L_1}$. 
\end{proof}

\begin{cor}
\label{cor:tau-pure-im}
Under the condition \eqref{eq:assum-x1-x2},
 $\tau$ is pure imaginary. 
\end{cor}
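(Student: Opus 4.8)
The plan is to read off reality and positivity directly from the explicit integral representations \eqref{eq:periods} and feed them into the formula of Theorem \ref{th:rel-Prym-Periods}. By \eqref{eq:tau-express} we have $\tau=-f_1(x)/f_2(x)-\i$, and since $\re(-\i)=0$ it suffices to prove that the ratio $f_1(x)/f_2(x)$ is purely imaginary, i.e.\ that $\re(\tau)=0$.

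First I would fix $(x_1,x_2)\in\dot U\cap\R^2$ subject to \eqref{eq:assum-x1-x2} and inspect the two integrals in \eqref{eq:periods} defining $f_1(x)$ and $f_2(x)$. On the interval $v\in(-\infty,0)$ each of $-v,\ 1-v,\ 1-vz$ is positive (using $0<z<1$), so by the branch convention recorded at the end of Proposition \ref{prop:a=c1=c2} the integrand of $f_1$ is positive real and the integral is a positive real number. Likewise, on $v\in(1/z,\infty)$, where $1/z>1$, each of $v,\ v-1,\ vz-1$ is positive, so the integral appearing in $f_2$ is also a positive real number.

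Next I would track the prefactors. Since $0<x_1<1$ and $0<x_2<1$ we have $(1-x_1)(1-x_2)>0$, whence $\sqrt[4]{(1-x_1)(1-x_2)}$ is positive real; and $B(1/4,1/4),\ B(1/4,1/2)$ are positive real as Beta values of positive arguments. Therefore $f_1(x)$ is a positive real number, while the only non-real factor in $f_2(x)$ is the explicit $\i$, so $f_2(x)$ equals $\i$ times a positive real number.

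Combining these, $f_1(x)/f_2(x)=(1/\i)\cdot(\text{positive real})=-\i\cdot(\text{positive real})$ is purely imaginary, so $\re(\tau)=\re\!\big(-f_1(x)/f_2(x)\big)-\re(\i)=0$ and $\tau$ is pure imaginary. There is essentially no obstacle here: the entire content is the bookkeeping of signs and branches, which is already prepared by the positivity convention in Proposition \ref{prop:a=c1=c2}. The only point requiring a little care is confirming that the two explicit integrals are genuinely \emph{positive} reals and not merely real, which is what pins down $\re(\tau)=0$; consistency with $\tau\in\H$ (Theorem \ref{th:period-matrix}) then forces the positive imaginary part.
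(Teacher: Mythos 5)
Your proof is correct and follows exactly the route the paper intends: its own proof of Corollary \ref{cor:tau-pure-im} consists of the single remark that the claim follows ``easily'' from \eqref{eq:periods} and Theorem \ref{th:rel-Prym-Periods}, and your argument is precisely the sign/branch bookkeeping that fills in that remark. Nothing further is needed.
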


\begin{proof}
We can easily show this corollary by \eqref{eq:periods} and 
Theorem \ref{th:rel-Prym-Periods}.  
\end{proof}

\section{The monodromy of Schwarz's map for $\cF_2$}
\label{sec:monod}
We modify Schwarz's map $\mathbf{f}$ for $\cF_2$ by the linear transformation 
\begin{equation}
\label{eq:modified-period-map}
(x_1,x_2)\mapsto \tr(f'_1(x),\dots,f'_4(x))=
Q\tr(f_1(x),\dots f_4(x)),\quad 
Q=\begin{pmatrix}
-1 & -\i & 0 & 0\\
 0 &  1  & 0 & 0\\
 0 &\frac{1}{2} &\frac{1-\i}{4} &\frac{-1+\i}{4}\\
 0 &\frac{-\i}{2} &\frac{1+\i}{4} &\frac{1+\i}{4}\\
\end{pmatrix}.
\end{equation}
Then we see that 
$$f'_1(x)/f'_2(x)\in \H,\quad 
f'_3(x)/f'_2(x),f'_4(x)/f'_2(x)\in T_\tau.
$$

This modification of Schwarz's map transforms the representation matrices 
$M_i^\mu$ with parameters
$\mu=(\i,\i,\i,\i,-1)$
in \cite[Corollary 3.12]{MSTY}  
into $M_i=Q M^\mu_i Q^{-1}$ $(i=1,\dots,5)$, which are 
$$
M_1=
\i\left(\begin{array}{c@{\hspace{3mm}}c@{\hspace{3mm}}c@{\hspace{3mm}}c}
0 & 1 & 0 & 0\\
-1& 0& 0 & 0\\
-1 & 0 & 0& 1\\
0 & 1 & -1 & 0
\end{array}\right),\quad 
M_2=
\i\left(\begin{array}{c@{\hspace{3mm}}c@{\hspace{3mm}}c@{\hspace{3mm}}c}
0 & 1 & 0 & 0\\
-1& 0& 0& 0\\
0 & 0 & 0 & -1\\
0 & 0 & 1 & 0
      \end{array}\right),
    $$
    $$
M_3=
\left(\begin{array}{c@{\hspace{3mm}}c@{\hspace{3mm}}c@{\hspace{3mm}}c}
1 & 2 & 0 & 0\\
0 & 1 & 0 & 0\\
0 & 0 & 1 & 0\\
0 & 0 & 0 & 1\end{array}\right),
\quad
M_4=
\left(\begin{array}{c@{\hspace{3mm}}c@{\hspace{3mm}}c@{\hspace{3mm}}c}
2& 1 & 0 & 0\\
-1& 0 & 0 & 0\\
-1& -1 & 1 & 0\\
0 & 0 & 0 & 1\end{array}\right),
\quad 
M_5=
\left(\begin{array}{c@{\hspace{3mm}}c@{\hspace{3mm}}c@{\hspace{3mm}}c}
2 & 1 & 0& 0\\
-1& 0 & 0 & 0\\
0 & 0 & 1 & 0\\
0 & 0 & 0 & 1\end{array}\right).
$$

\begin{theorem}
\label{th:str-monod}
The group $\cM$ generated by $M_1$,\dots, $M_5$ is 
\begin{equation}
\label{eq:express-monod}
\Big\{\i^{-n_1+n_2} \begin{pmatrix} 
G & O\\
L & J_2^{n_1+n_2}\
\end{pmatrix}
\in \la \i\ra \times SL_4(\Z)\; \Big|\; 
\begin{array}{l} (n_1,n_2)\in (\Z/(2\Z))^2,\ G\in \G_{1,2},\\
l_{11}+l_{12}\equiv l_{21}+l_{22}\equiv n_1 \bmod2\end{array}
 \Big\},
\end{equation}
where $\G_{1,2}$ is the Igusa group given in \S\ref{sec:Schwarz's map}, 
$
L=\begin{pmatrix}
 l_{11} &l_{12} \\
 l_{21} &l_{22} 
 \end{pmatrix}$, and
$J_2=\begin{pmatrix} 0 & -1\\ 1 & 0\end{pmatrix}.$
\end{theorem}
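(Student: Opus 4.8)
The plan is to prove the two inclusions $\cM\subseteq\cR$ and $\cR\subseteq\cM$, where $\cR$ denotes the set on the right of \eqref{eq:express-monod}. I write a typical element of $\cR$ as $\i^{-n_1+n_2}\left(\begin{smallmatrix} G & O\\ L & J_2^{n_1+n_2}\end{smallmatrix}\right)$ and call $(n_1,n_2,G,L)$ its \emph{data}; a preliminary step is to note that the only ambiguity in the presentation of a matrix inside $\la\i\ra\times SL_4(\Z)$ is $\i^aN=\i^{a+2}(-N)$, which leaves $(n_1,n_2)\in(\Z/2\Z)^2$ unchanged and replaces $(G,L)$ by $(-G,-L)$ — both operations preserving the condition $G\in\G_{1,2}$ and the parity condition on $L$, so $\cR$ is well defined as a subset of $GL_4(\Z[\i])$.

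For $\cM\subseteq\cR$ I would first check that $\cR$ is a group. From the block product $\i^a\left(\begin{smallmatrix}G_1&O\\ L_1&B_1\end{smallmatrix}\right)\i^{a'}\left(\begin{smallmatrix}G_2&O\\ L_2&B_2\end{smallmatrix}\right)=\i^{a+a'}\left(\begin{smallmatrix}G_1G_2&O\\ L_1G_2+B_1L_2&B_1B_2\end{smallmatrix}\right)$, closure reduces to: $\G_{1,2}$ is a group and $\la J_2\ra$ is cyclic of order $4$; the assignment $M\mapsto(n_1,n_2)$ is a homomorphism to $(\Z/2\Z)^2$ (here the case $J_2^{n_1+n_2}=J_2^2=-E_2$ must be absorbed into the scalar via $-E_2=\i^2E_2$, which is exactly what keeps $(n_1,n_2)$ in $(\Z/2\Z)^2$); and the parity vector $s(L)=(l_{11}+l_{12},\,l_{21}+l_{22})\bmod 2$ behaves well. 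The essential observation for the last point is that every $G\in\G_{1,2}$ is congruent mod $2$ to $E_2$ or to $\left(\begin{smallmatrix}0&1\\1&0\end{smallmatrix}\right)$, whence $s(LG)=s(L)$, while $s(J_2L)$ merely interchanges the two components of $s(L)$; since admissible $L$ satisfy $s(L)=(n_1,n_1)$, a vector fixed by this interchange, the product again satisfies $s(L_1G_2+B_1L_2)=(n_1'',n_1'')$ with $n_1''=n_1+n_1'$. It remains to record the data of the generators, namely $M_3\leftrightarrow(0,0,T,O)$ with $T=\left(\begin{smallmatrix}1&2\\0&1\end{smallmatrix}\right)$, $M_5\leftrightarrow(0,0,S,O)$ and $M_4\leftrightarrow(0,0,S,\left(\begin{smallmatrix}-1&-1\\0&0\end{smallmatrix}\right))$ with $S=\left(\begin{smallmatrix}2&1\\-1&0\end{smallmatrix}\right)$, $M_1\leftrightarrow(1,0,J_2,\left(\begin{smallmatrix}1&0\\0&-1\end{smallmatrix}\right))$ and $M_2\leftrightarrow(0,1,J_2^{-1},O)$, and to verify each satisfies the defining conditions.

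For $\cR\subseteq\cM$ I would use the homomorphism $\Psi:\cR\to(\Z/2\Z)^2\times(\G_{1,2}/\{\pm E_2\})$, $M\mapsto((n_1,n_2),[G])$. The key arithmetic fact making this tractable is the identity $S=-T^{-1}J_2$, i.e.\ $[J_2]=[T][S]$, so that the two data-$(0,0)$ generators $M_3,M_5$ already surject onto $\G_{1,2}/\{\pm E_2\}$, while $M_1,M_2$ supply the $(\Z/2\Z)^2$; hence $\Psi|_\cM$ is onto. It then suffices to show $\ker\Psi\subseteq\cM$. Its $G\equiv E_2$ part is the rank-$4$ lattice $K$ of $\left(\begin{smallmatrix}E_2&O\\ L&E_2\end{smallmatrix}\right)$ with $s(L)=(0,0)$ (even row sums); for this I start from $M_4M_5^{-1}$, whose data is $(0,0,E_2,\left(\begin{smallmatrix}-1&-1\\0&0\end{smallmatrix}\right))$, and use that conjugation by any $W\in\cM$ acts on $K$ by $L\mapsto BLG^{-1}$ (the lower-left block of $W$ cancels). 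Conjugating by $M_3$ replaces $\left(\begin{smallmatrix}-1&-1\\0&0\end{smallmatrix}\right)$ by $\left(\begin{smallmatrix}-1&1\\0&0\end{smallmatrix}\right)$, and conjugating these by $M_1$ (action $L\mapsto J_2LJ_2^{-1}$) produces the bottom-row analogues; closing under addition inside $K$ gives the four generators $\left(\begin{smallmatrix}1&\pm1\\0&0\end{smallmatrix}\right)$, $\left(\begin{smallmatrix}0&0\\1&\pm1\end{smallmatrix}\right)$, so $K\subseteq\cM$. The remaining coset $G\equiv-E_2$ of $\ker\Psi$ is reached by $(M_3M_5)^2$, whose $G$-part is $(TS)^2=(-J_2)^2=-E_2$. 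Finally, given any $M\in\cR$, a suitable word $W$ in the generators with $\Psi(W)=\Psi(M)$ moves $M$ into $\ker\Psi\subseteq\cM$, so $M\in\cM$.

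The main obstacle is the second inclusion, and within it the simultaneous bookkeeping of the order-$4$ scalar, the power of $J_2$, and the sign flips coming from $J_2^2=-E_2$, which couple the scalar, the block $G$, and the block $L$; the parity analysis of $s(L)$ and the identity $[J_2]=[T][S]$ are what reduce this to a finite check and guarantee that the conjugation orbit of $\left(\begin{smallmatrix}-1&-1\\0&0\end{smallmatrix}\right)$, closed under addition, is exactly the even-row-sum lattice $K$ and not a proper sublattice. Verifying that no smaller subgroup sneaks in — that $\Psi|_\cM$ is genuinely onto and that $K$ is generated rather than merely contained — is the step demanding the most care.
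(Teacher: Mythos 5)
Your proposal is correct, and it reaches the theorem by a route that differs from the paper's in one structural respect. The paper first enlarges the group to $\widetilde{\cM}=\la M_1,\dots,M_5,-E_4\ra$ and identifies it with the set cut out by the single, weaker condition $\Sigma(L)\in 2\Z$; generating the corresponding lattice of lower-left blocks there requires the extra element $H_2=-(M_3M_5)^2M_1M_2$, whose block $\left(\begin{smallmatrix}0&-1\\-1&0\end{smallmatrix}\right)$ has odd row sums, and the theorem is then recovered by checking $-E_4\notin\cM'$ and an index-$2$ count. You instead work inside $\cM$ throughout: your kernel lattice $K$ is the index-$2$ sublattice with both row sums even, which is exactly what the conjugates of $M_4M_5^{-1}$ alone generate (your determinant-$4$ span check), and the coset $G\equiv -E_2$ of $\ker\Psi$ is supplied by $(M_3M_5)^2$. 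The price is that you must carry the coupled condition $s(L)=(n_1,n_1)$ through every product, which you handle correctly via the observation that $\G_{1,2}$ reduces mod $2$ to $\{E_2,\left(\begin{smallmatrix}0&1\\1&0\end{smallmatrix}\right)\}$ and that symmetric parity vectors are invariant under the swap induced by $J_2$; the paper postpones this coupling to the final index computation. The computational core — the generator data, the conjugation action $L\mapsto BLG^{-1}$, and the identity $TS=-J_2$ — is common to both arguments, so neither is shorter in substance; your formulation via the homomorphism $\Psi$ makes the surjectivity and kernel steps (the ``no smaller subgroup'' worry) more transparent, while the paper's detour through $\widetilde{\cM}$ lets it manipulate the simpler condition $\Sigma(L)\in 2\Z$ for most of the proof.
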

\begin{remark}
\label{rem:(2,2)-block}
The $(2,2)$-block of $\cM$ 
is generated by those of $M_1$ and $M_2$. They satisfy 
$M_1M_2=M_2M_1$, $M_1^2=M_2^2=E_4$, and their product 
$$M_1M_2=\begin{pmatrix}
1 & 0 & 0 & 0 \\
0 & 1 & 0 & 0 \\
0 & 1 &-1 & 0 \\
1 & 0 & 0 &-1 \\
\end{pmatrix}$$
is of order $2$. 
Thus the group structure of 
the $(2,2)$-block of $\cM$ 
is isomorphic to 
$(\Z/(2\Z))^2$, and an isomorphism is given in (\ref{eq:express-monod}).
Note that $(n_1,n_2)\in (\Z/(2\Z))^2$ in (\ref{eq:express-monod}) is  
uniquely determined by the $(2,2)$-block of any element of  $\cM$;
in fact, we give its correspondence as follows.
$$\begin{array}{|c|cccc|}
\hline
\la M_1,M_2\ra & E_4 & M_2 & M_1 & M_1M_2\\
(2,2)\textrm{-block} & E_2 & \i J_2 & -\i J_2& -E_2\\
(n_1,n_2) &(0,0) &(0,1) &(1,0) &(1,1) \\
\hline \end{array}
$$
Note also that $\pm\i J_2$ are of order $2$ though $\pm J_2$ are of order $4$. 
\end{remark}

\begin{proof}
At first, we show that the group $\widetilde \cM$
generated by $M_1$,\dots, $M_5$ and $-E_4$ coincides with
$$
\Big\{\pm\i^{-n_1+n_2} \begin{pmatrix} 
G & O\\
L & J_2^{n_1+n_2}\
\end{pmatrix}
\in \la \i\ra \times SL_4(\Z)\; 
\Big|\; (n_1,n_2)\in (\Z/(2\Z))^2,\ G\in \G_{1,2}, \ \Sigma(L)\in 2\Z\Big\},
$$
where $\Sigma(L)$ denotes the sum of all entries of $L$.
It is easy to check that this set $\widetilde{\cM}'$ admits a group structure
and contains $M_1$,\dots,$M_5$ and $-E_4$. Thus we have $\widetilde{\cM}
\subset \widetilde{\cM}'$. 
We show that any element $M$ of $\widetilde{\cM}'$ can be expressed 
a product of $M_1$,\dots,$M_5$ and $-E_4$. 
Note that the $(2,2)$-block of $\widetilde\cM$ is generated by 
$M_1$ and $M_2$, and it is isomorphic to $(\Z/(2\Z))^2$ 
as seen in  Remark \ref{rem:(2,2)-block}. 
The $(1,1)$-block of $\widetilde\cM$ is generated by those of $M_3$ and $M_5$ 
since 
$\G_{1,2}$ is generated by 
$\begin{pmatrix}
1 & 2\\ 0 & 1
\end{pmatrix}
$ and $J_2$, and 
$M_3M_5=\begin{pmatrix}
J_2^{-1}& O \\
O & E_2  \\
\end{pmatrix}.
$
Hence we can transform any element $M$ in $\widetilde{\cM}'$ 
into 
$$
\begin{pmatrix} 
E_2 & O\\
L & E_2
\end{pmatrix}
$$
by multiplying $M_1,M_2,M_3$ and $M_5$. 
Note that the matrices of this type   
admit an additive group structure with respect to $2\times 2$ matrices $L$'s,
and that this group is normal in  $\widetilde{\cM}'$. 
We give generators of this group in terms of products of 
$M_1,\dots, M_5$ and $-E_4$.
We put 
\begin{align*}
H_1&=M_4M_5^{-1}=
\begin{pmatrix} 
E_2 & O\\
L_1 & E_2
\end{pmatrix},\quad 
L_1=\begin{pmatrix} 
-1 & -1\\
0 & 0
\end{pmatrix},\\ 
H_2&=-(M_3M_5)^2M_1M_2= 
\begin{pmatrix} 
E_2 & O\\
L_2 & E_2
\end{pmatrix},\quad 
L_2=\begin{pmatrix} 
 0 &-1\\
-1 & 0
\end{pmatrix}.
\end{align*}
The $(2,1)$-blocks of $M_2H_1M_2^{-1}$, $(M_3M_5^{-1})H_1(M_3M_5^{-1})^{-1}$
are 
$$
L_3=\begin{pmatrix} 
0 & 0\\
-1 & 1
\end{pmatrix},\quad  
L_4=\begin{pmatrix} 
-1 & 1\\
0 & 0
\end{pmatrix}.
$$ 
The matrices $L_1,\dots,L_4$ span the set of integral $2\times 2$ 
matrices $L$ with $\Sigma(L)\in 2\Z$
since 
$$
(L_1,L_4,L_3,L_2)=
\left(\begin{pmatrix}
1 & 0\\
0 & 0\\
\end{pmatrix},
\begin{pmatrix}
0 & 1\\
0 & 0\\
\end{pmatrix},
\begin{pmatrix}
0 & 0\\
1 & 0\\
\end{pmatrix},
\begin{pmatrix}
0 & 0\\
0 & 1\\
\end{pmatrix}
\right)
\begin{pmatrix}
-1 & -1 & 0 & 0\\
-1 &  1 & 0 &-1\\
 0 &  0 &-1 &-1\\
 0 &  0 & 1 & 0\\
\end{pmatrix}
$$
and the determinant of the last matrix is $-2$. 
Hence we have $\widetilde{\cM}=\widetilde{\cM}'$.

Let $\cM'$ be the set given in (\ref{eq:express-monod}). 
By the case study of $(2,1)$ and $(2,2)$ blocks of elements of $\cM'$, 
we can show that $\cM'$ admits a group structure. 
Since $M_1,\dots,M_5$ belong to $\cM'$, we have $\cM\subset \cM'$.  
It is easy to see that $-E_4\notin \cM'$.  
Thus $-E_4$ does not belong to $\cM$, and $\cM$ is a subgroup of 
$\widetilde{\cM}$ of index $2$. Therefore, we have 
$$
\cM\subset \cM'\subsetneqq \widetilde{\cM}'=\widetilde{\cM}, 
\quad [\widetilde{\cM}:\cM]=2,
$$
which imply $\cM= \cM'$.
\end{proof}

\section{The inverse of Schwarz's map for $\cF_2$}
\label{sec:inverse}

Theta functions with characteristics are defined by 
\begin{equation}
\label{eq:theta-function}
\h_{k,\ell }(y,\tau)=
\sum_{n\in \Z} \exp\big[\pi\i\big((n+\frac{k}{2})^2\tau+2(n+\frac{k}{2})
(y+\frac{\ell }{2})\big)\Big],
\end{equation}
where $(y,\tau)\in \C\times \H$ and $(k,\ell )=(0,0),(0,1),(1,0),(1,1)$.
These functions are holomorphic on $\C\times \H$ and satisfy
basic properties
\begin{align}
\nonumber
\h_{k,\ell }(y+p\tau+q,\tau)&=
(-1)^{kq+\ell p}\exp(-\pi\i(p^2\tau +2py))
\h_{k,\ell }(y,\tau),\\
\nonumber
\h_{k,\ell }(-y,\tau)&=(-1)^{k\ell }\h_{k,\ell }(y,\tau),\\
\label{eq:basic-theta}
\h_{k,\ell }(y+\frac{1}{2},\tau)&=(-1)^{k\ell }\h_{k,1-\ell }(y,\tau),\\
\nonumber
\h_{k,\ell }(y+\frac{\tau}{2},\tau)&=(-\i)^{\ell }\exp(-\pi\i(\frac{\tau}{4}+y))
\h_{1-k,\ell }(y,\tau),\\
\nonumber
\h_{k,\ell }(y+\frac{\tau+1}{2},\tau)&=
(-1)^{k\ell }(-\i)^{1-\ell }\exp(-\pi\i(\frac{\tau}{4}+y))
                                   \h_{1-k,1-\ell }(y,\tau),
\end{align}
where $p,q\in \Z$.
The restriction $\h_{k,\ell}(\tau)=\h_{k,\ell}(0,\tau)$ 
of $\h_{k,\ell}(y,\tau)$ to $y=0$ is called a theta constant,  
which is a holomorphic function on the upper half space $\H$. 
By 
the second of (\ref{eq:basic-theta}),
we have $\h_{1,1}(\tau)=0$. 
It is known that the other theta constants satisfy
Jacobi's identity: 
$$\h_{0,1}(\tau)^4+\h_{1,0}(\tau)^4=\h_{0,0}(\tau)^4.$$

Since the functions 
$$\frac{\h_{1,1}(y+1/2,\tau)}{\h_{0,0}(y+1/2,\tau)}=
-\frac{\h_{1,0}(y,\tau)}{\h_{0,1}(y,\tau)},\quad 
\frac{\h_{1,1}(y+\tau/2,\tau)}{\h_{0,0}(y+\tau/2,\tau)}=
-\i\frac{\h_{0,1}(y,\tau)}{\h_{1,0}(y,\tau)}
$$
are even with respect to $y$, the partial derivative 
$$\frac{\pa }{\pa y}\frac{\h_{1,1}(y,\tau)}{\h_{0,0}(y,\tau)}$$
vanishes at $y=1/2,\tau/2$.

Recall that 
$$\H\ni \tau(x)=-\frac{f_1(x)} {f_2(x)} -\i=\int_{\a_i}\f_i=
\frac{\int_{\a_i}\eta_i}{\int_{\b_i}\eta_i}\quad (i=1,2),
$$
\begin{align*}
T_\tau\oplus T_\tau\ni 
y(x)&=(y_1(x),y_2(x))\\
&=
\big( \frac{1-\i}{4}\cdot \frac{f_3(x)-f_4(x)}{f_2(x)}+\frac{1}{2},
 \frac{1+\i}{4}\cdot \frac{f_3(x)+f_4(x)}{f_2(x)}-\frac{\i}{2}
\big)\\
&=\jmath_{\L}(P_{1-x_1})=
(\jmath_{\L_1}(P_{1-x_1}),\jmath_{\L_2}(P_{1-x_1}))
=(\int_{P_0}^{P_{1-x_1}} 2\f_1,\int_{P_0}^{P_{1-x_1}} 2\f_2).
\end{align*}
We regard the image of Schwarz's map $\mathbf{f}$ for $\cF_2$ as 
\begin{equation}
\label{eq:image-Schwarz}
(y_1(x),y_2(x),\tau(x))\in \C^2\times \H. 
\end{equation}

\begin{lemma}
\label{lem:v(v-1)/(v-1/z)}
The pull back of $\dfrac{\h_{1,1}(y,\tau(x))^4}{\h_{0,0}(y,\tau(x))^4}$ 
under the map $\jmath_{\L_i}$ $(i=1,2)$ coincides with
$\dfrac{v(v-1)}{v-1/z}$, where the coordinate $v$ of $C_z$ is regarded as 
a meromorphic function on $C_z$.
\end{lemma}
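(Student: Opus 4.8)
The plan is to prove that the two meromorphic functions $\jmath_{\L_1}^{*}\bigl(\h_{1,1}(y,\tau)^{4}/\h_{0,0}(y,\tau)^{4}\bigr)$ and $g=v(v-1)/(v-1/z)$ on $C_z$ have the same divisor, hence agree up to a nonzero constant, and then to pin that constant down to $1$. The case $i=2$ then comes for free: writing $G=\h_{1,1}^{4}/\h_{0,0}^{4}$, the relation $\sigma^{*}(\f_1)=-\f_2$ gives $\jmath_{\L_1}(\sigma P)=-\jmath_{\L_2}(P)$ by \eqref{eq:rel-jmaths}, and $G$ is even in $y$ (since $\h_{1,1}$ is odd and $\h_{0,0}$ even), so $\sigma^{*}\bigl(\jmath_{\L_1}^{*}G\bigr)=\jmath_{\L_2}^{*}G$; as $g$ depends only on the $\sigma$-invariant coordinate $v$ we have $\sigma^{*}g=g$, so $\jmath_{\L_1}^{*}G=g$ forces $\jmath_{\L_2}^{*}G=g$.

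First I would check that $G(y)=\h_{1,1}(y,\tau)^{4}/\h_{0,0}(y,\tau)^{4}$ descends to a single-valued meromorphic function on $T_\tau=\C/(\Z\tau+\Z)$: by the first line of \eqref{eq:basic-theta} the automorphy factors of $\h_{1,1}$ and $\h_{0,0}$ under $y\mapsto y+p\tau+q$ differ only by the sign $(-1)^{p+q}$, which disappears upon taking fourth powers. Its divisor on $T_\tau$ is $4[0]-4[\tfrac{\tau+1}{2}]$: indeed $\h_{1,1}$ has a simple zero at $y=0$, while the last line of \eqref{eq:basic-theta} evaluated at $y=0$, together with $\h_{1,1}(0,\tau)=0$, shows that $\h_{0,0}$ vanishes simply at $y=\tfrac{\tau+1}{2}$. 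Thus $G$ has degree $4$.

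Next I would pull $G$ back along the degree-$2$ map $\jmath_{\L_1}$ and compute the two relevant fibres. By the proof of Proposition \ref{prop:inje-jmath-L} the fibre of $\jmath_{\L_1}$ over $0$ is exactly $\{P_0,P_1\}$ (recall $\jmath_{\L_1}(P_1)=1\equiv 0$), both points unramified since $\f_1(P_0),\f_1(P_1)\ne 0$. By Proposition \ref{prop:images}, $\jmath_{\L_1}(P_{1/z})=\jmath_{\L_1}(P_\infty)=\tfrac{\tau+1}{2}$, and by Corollary \ref{cor:ram-pts} the ramification points of $\jmath_{\L_1}$ lie over $v=v_\pm\notin\{0,1,1/z,\infty\}$, so $P_{1/z}$ and $P_\infty$ are unramified and, being two distinct points in a degree-$2$ fibre, exhaust the fibre over $\tfrac{\tau+1}{2}$. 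Hence
$$\mathrm{div}\big(\jmath_{\L_1}^{*}G\big)=4P_0+4P_1-4P_{1/z}-4P_\infty.$$
On the other hand $g=v(v-1)/(v-1/z)$ has divisor $[0]+[1]-[1/z]-[\infty]$ on the $v$-line $\P^1$, and since $\pr$ is totally ramified of index $4$ over each of $0,1,1/z,\infty$, the same divisor $4P_0+4P_1-4P_{1/z}-4P_\infty$ results on $C_z$. Therefore $\jmath_{\L_1}^{*}G=c\,g$ for some $c\in\C^{\times}$.

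The remaining, and main, difficulty is to prove $c=1$. I would match leading coefficients at $P_0$, where both functions have a zero of order $4$. Choosing a local coordinate $t$ at $P_0$ with $v=t^{4}u(t)$, $u(0)\ne 0$, one finds $g=z\,v+O(v^{2})\sim z\,t^{4}$; from $\eta_1=dv/w$ and $w^{4}=v^{3}(1-v)(1-vz)$ one gets $\eta_1\sim 4\,dt$ near $P_0$, whence, by Corollary \ref{cor:ex-dual-basis}, $y=\jmath_{\L_1}(P)=\int_{P_0}^{P}2\f_1\sim \big(\int_{P_0}^{P}\eta_1\big)/\int_{\b_1}\eta_1\sim 4t/\int_{\b_1}\eta_1$; and $G(y)\sim\big(\pa_y\h_{1,1}(0,\tau)/\h_{0,0}(0,\tau)\big)^{4}y^{4}$. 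Equating the two fourth-order coefficients reduces $c=1$ to the identity
$$\Big(\frac{\pa_y\h_{1,1}(0,\tau)}{\h_{0,0}(0,\tau)}\Big)^{4}=z\Big(\frac{1}{4}\int_{\b_1}\eta_1\Big)^{4},$$
which I would establish from Jacobi's derivative formula expressing $\pa_y\h_{1,1}(0,\tau)$ as $\pm\pi\,\h_{0,0}(0,\tau)\h_{0,1}(0,\tau)\h_{1,0}(0,\tau)$ (the sign being irrelevant after taking fourth powers), together with the evaluation of the period $\int_{\b_1}\eta_1$ of the elliptic curve $T_\tau$ in terms of theta constants. This period–theta computation is where the genuine work lies; everything preceding it is bookkeeping with divisors, and the $\sigma$-symmetry noted at the outset transfers the resulting identity from $i=1$ to $i=2$ with the same constant.
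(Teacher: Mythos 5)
Your divisor computation and the reduction of the lemma to the determination of a single constant $c$ follow the paper's proof exactly (the paper simply cites Proposition \ref{prop:images} for the divisor $4P_0+4P_1-4P_{1/z}-4P_\infty$; your fibre-by-fibre verification via Corollary \ref{cor:ram-pts} is a correct elaboration of that step, and your $\sigma$-symmetry argument for $i=2$ is fine). The genuine gap is in the last step, which you yourself flag as ``where the genuine work lies'': you reduce $c=1$ to the identity
\begin{equation*}
\Big(\frac{\pa_y\h_{1,1}(0,\tau)}{\h_{0,0}(0,\tau)}\Big)^{4}
=z\Big(\frac{1}{4}\int_{\b_1}\eta_1\Big)^{4},
\end{equation*}
but do not prove it. After Jacobi's derivative formula the left side is $\pi^4\h_{0,1}(0,\tau)^4\h_{1,0}(0,\tau)^4$, so the identity amounts to evaluating the period $\int_{\b_1}\eta_1=\int_{(1-\sigma^2)I_{0,1}}dv/\sqrt[4]{v^3(1-v)(1-vz)}$ of the genus-$3$ curve in closed form in terms of theta constants of $\tau$ \emph{and} knowing the relation between $z$ and $\tau$. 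That relation is precisely Theorem \ref{th:z-express}, which the paper deduces \emph{from} this lemma; so the natural route to your identity is circular in the logical order of the paper, and an independent derivation of the period--theta relation is a substantial computation at least as hard as the lemma itself. As written, the constant is not pinned down.

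The paper avoids periods entirely at this point. It observes that $\pa_y\big(\h_{1,1}(y,\tau)/\h_{0,0}(y,\tau)\big)$ vanishes at $y=1/2$ and $y=\tau/2$, so the two critical values of $\kappa\,\h_{1,1}^4/\h_{0,0}^4$ are $\kappa\,\h_{1,0}(0,\tau)^4/\h_{0,1}(0,\tau)^4$ and $\kappa\,\h_{0,1}(0,\tau)^4/\h_{1,0}(0,\tau)^4$, whose product is $\kappa^2$; on the other hand the critical values $\l$ of $v(v-1)/(v-1/z)$ satisfy $\l^2+(2-4/z)\l+1=0$, so their product is $1$. Hence $\kappa^2=1$, and the sign is fixed by positivity (for $0<z<1$ the number $\tau$ is pure imaginary by Corollary \ref{cor:tau-pure-im}, the theta constants are positive, and $4/z-2>0$). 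If you want to salvage your approach, you could replace your local-expansion step by this critical-value argument; alternatively you would have to supply an independent proof of the period evaluation, e.g.\ by degenerating $z\to 0$ or by a separate Thomae-type computation, neither of which is sketched in your proposal.
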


\begin{proof}
By Proposition \ref{prop:images}, we have 
$$\Big[\jmath_{\L_1}^*\big (\dfrac{\h_{1,1}(y,\tau)^4}{\h_{0,0}(y,\tau)^4}\big)
\Big]=4P_0+4P_1-4P_{1/z}-4P_\infty,
$$
where $[f]$ denotes the divisor of a meromorphic function $f$.
Thus there exists a non-zero constant $\kappa$ such that 
$\dfrac{v(v-1)}{v-1/z}$ coincides with $\kappa$-times of this function.  
Since $\dfrac{\pa}{\pa y}\dfrac{\h_{1,1}(y,\tau)}{\h_{0,0}(y,\tau)}$ vanishes 
at $y=1/2,\tau/2$, 
the values 
$$\kappa\cdot \dfrac{\h_{1,1}(1/2,\tau)^4}{\h_{0,0}(1/2,\tau)^4}=
\kappa\cdot\dfrac{\h_{1,0}(0,\tau)^4}{\h_{0,1}(0,\tau)^4},\quad 
\kappa\cdot\dfrac{\h_{1,1}(\tau/2,\tau)^4}{\h_{0,0}(\tau/2,\tau)^4}=
\kappa\cdot\dfrac{\h_{0,1}(0,\tau)^4}{\h_{1,0}(0,\tau)^4}
$$
are branched values of $\dfrac{v(v-1)}{v-1/z}$.
Let $\l$ be a branched value of $\dfrac{v(v-1)}{v-1/z}$.
Then the equation $\dfrac{v(v-1)}{v-1/z}=\l$ of $v$ 
has a multiple root. Thus $\l$ satisfies 
\begin{equation}
\label{eq:branch-equation}
(\l+1)^2-\frac{4\l}{z}=\l^2+(2-\frac{4}{z})\l+1=0.
\end{equation}
Hence the product of the branched values should be $1$, and we have 
$$
\Big(\kappa\cdot\dfrac{\h_{1,0}(0,\tau)^4}{\h_{0,1}(0,\tau)^4}\Big)\cdot 
\Big(\kappa\cdot\dfrac{\h_{0,1}(0,\tau)^4}{\h_{1,0}(0,\tau)^4}\Big)=
\kappa^2=1.
$$ 
If $0<x_1,x_2<1$, $x_1+x_2<1$, $0<v<1$ then 
$z$ belongs to the open interval $(0,1)$ and  
$\tau(y)$ becomes pure imaginary by Corollary \ref{cor:tau-pure-im}.
Then  we have
$$\h_{0,1}(0,\tau),\h_{1,0}(0,\tau)>0$$
and 
$$0<\frac{4}{z}-2=\kappa\big( 
\dfrac{\h_{1,0}(0,\tau)^4}{\h_{0,1}(0,\tau)^4}+
\dfrac{\h_{0,1}(0,\tau)^4}{\h_{1,0}(0,\tau)^4}
\big),
$$
hence $\kappa$ should be $1$.
\end{proof}

\begin{remark}
The branched values of $\dfrac{v(v-1)}{v-1/z}$ are
$$\frac{2-z\pm2\sqrt{1-z}}{z}=zv_{\pm}^2,
$$
where $v_{\pm}$ are given in Corollary \ref{cor:ram-pts}.
\end{remark}

By Lemma \ref{lem:v(v-1)/(v-1/z)}, we have 
$$\frac{\h_{1,1}(y_1(x),\tau(x))^4}{\h_{0,0}(y_1(x),\tau(x))^4}
=\frac{\h_{1,1}(y_2(x),\tau(x))^4}{\h_{0,0}(y_2(x),\tau(x))^4}.
$$
We can determine the irreducible component containing 
the image of Schwarz's map $\mathbf{f}$ for $\cF_2$. 

\begin{theorem}
\label{th:image-Schwarz}
The image of Schwarz's map $\mathbf{f}$ for $\cF_2$  
is contained in the analytic set
$$\h_{0,0}(y_1(x),\tau(x))\h_{1,1}(y_2(x),\tau(x))=
\i\h_{1,1}(y_1(x),\tau(x))\h_{0,0}(y_2(x),\tau(x)).
$$
\end{theorem}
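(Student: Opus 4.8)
The plan is to extract the asserted linear relation from the fourth-power identity of Lemma~\ref{lem:v(v-1)/(v-1/z)} by taking a fourth root, and then to fix the resulting fourth-root-of-unity ambiguity by a local computation at the ramification point $P_{1/z}$. With $y_1=y_1(x)$, $y_2=y_2(x)$ and $\tau=\tau(x)$ as in \eqref{eq:image-Schwarz}, set
$$R=\frac{\h_{0,0}(y_1,\tau)\,\h_{1,1}(y_2,\tau)}{\h_{1,1}(y_1,\tau)\,\h_{0,0}(y_2,\tau)},$$
so that the theorem is precisely the identity $R=\i$. By Lemma~\ref{lem:v(v-1)/(v-1/z)} both $\h_{1,1}(y_i,\tau)^4/\h_{0,0}(y_i,\tau)^4$ equal the value of $v(v-1)/(v-1/z)$ at $v=1-x_1$, whence $R^4\equiv1$. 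As $R$ is a meromorphic function of $x$ on the connected set $\dot U$ satisfying $R^4\equiv1$, it has neither zeros nor poles and is therefore a constant fourth root of unity; it remains only to identify this constant.

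To evaluate it I would fix $z$ and view $y_i=\jmath_{\L_i}(P)=\int_{P_0}^P 2\f_i$ as functions of the moving point $P=P_{1-x_1}\in C_z$, and continue analytically until $v=1-x_1\to1/z$, i.e.\ $P\to P_{1/z}$. Along the path $I_{0,1}+I_{1,1/z}$ underlying Theorem~\ref{th:rel-Prym-Periods}, Proposition~\ref{prop:images} gives $y_1,y_2\to\frac{\tau+1}{2}$ with no residual lattice translation. By the last relation in \eqref{eq:basic-theta}, together with the simple zero $\h_{1,1}(0,\tau)=0$, the function $\h_{0,0}(\,\cdot\,,\tau)$ has a simple zero at $\frac{\tau+1}{2}$ while $\h_{1,1}(\frac{\tau+1}{2},\tau)\neq0$; hence each quotient $\h_{1,1}(y_i,\tau)/\h_{0,0}(y_i,\tau)$ develops a simple pole at $P_{1/z}$. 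Comparing principal parts in a local coordinate $t$, noting that $R$ is the ratio of the $i=2$ quotient to the $i=1$ quotient, and using $dy_i=2\f_i$, I obtain
$$R=\frac{dy_1/dt}{dy_2/dt}\Big|_{P_{1/z}}=\frac{\f_1}{\f_2}\Big|_{P_{1/z}}.$$

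The remaining evaluation of $\f_1/\f_2$ at $P_{1/z}$ is routine. By Proposition~\ref{prop:sigma-action} the form $\eta_1$ vanishes at $P_{1/z}$ (its divisor is $2P_1+2P_{1/z}$) while $\eta_2$ does not, so the expansions \eqref{eq:ex-dual-basis} reduce at $P_{1/z}$ to $\f_i=\eta_2/(2\int_{\b_i}\eta_2)$, giving
$$\frac{\f_1}{\f_2}\Big|_{P_{1/z}}=\frac{\int_{\b_2}\eta_2}{\int_{\b_1}\eta_2}.$$
Since $\b_2=\sigma(\b_1)$ by \eqref{eq:sigma-action} and $\sigma^*(\eta_2)=\i\eta_2$ by Corollary~\ref{cor:ex-dual-basis}, one has $\int_{\b_2}\eta_2=\int_{\b_1}\sigma^*(\eta_2)=\i\int_{\b_1}\eta_2$, so $R=\i$. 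Clearing denominators turns this into the asserted equation, which then propagates to the whole image of $\mathbf{f}$ by the identity theorem.

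I expect the main obstacle to be the control of the fourth-root ambiguity, i.e.\ ensuring that no spurious sign is introduced in passing from the fourth powers to the first powers: the quotient $\h_{1,1}/\h_{0,0}$ is only anti-invariant under the period lattice, so the value of $R$ is sensitive to the chosen lifts of $y_1,y_2$ to $\C$. The limiting argument is arranged to avoid this, since Proposition~\ref{prop:images} supplies the exact lift $\frac{\tau+1}{2}$ of $P_{1/z}$ along the continuation of the very paths defining $y_1,y_2$; verifying this path compatibility is the delicate point. As an independent check that the constant is $\i$ rather than $-\i$, I would repeat the limit at $P_\infty$, where $y_1\to\frac{\tau+1}{2}$ but $y_2\to\frac{\tau-1}{2}$ differ by a full period: there $\f_1/\f_2|_{P_\infty}=\int_{\b_2}\eta_1/\int_{\b_1}\eta_1=-\i$ by the same computation with $\sigma^*(\eta_1)=-\i\eta_1$, while the translation $y_2\mapsto y_2-1$ inserts an extra factor $-1$ through \eqref{eq:basic-theta}, so that $R=(-1)\cdot(-\i)=\i$ once again.
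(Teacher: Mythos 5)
Your proposal is correct and follows essentially the same route as the paper's own proof: both reduce the statement, via Lemma \ref{lem:v(v-1)/(v-1/z)}, to identifying a constant fourth root of unity and then determine it by l'H\^{o}pital's rule combined with the expressions \eqref{eq:ex-dual-basis} for $\f_1,\f_2$ and the $\sigma$-equivariance of the periods of $\eta_1,\eta_2$. The only difference is cosmetic --- you evaluate the limit at $P_{1/z}$ (where $\eta_1$ vanishes) whereas the paper evaluates at $P_0$ (where $\eta_2$ vanishes) --- and the path-compatibility issue you flag is harmless, since under any lattice shift of $y_1,y_2$ with even total parity the transformation factors of $\h_{1,1}/\h_{0,0}$ cancel between the numerator and denominator of $R$.
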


\begin{proof}
We check that this analytic set is well defined.
Lemma \ref{lem:Lambda-index2} implies that 
$(y_1(x),y_2(x))$ is transformed into 
$(y_1(x)+p_1\tau+q_1,y_2(x)+p_2\tau+q_2)$ 
by the analytic continuation of $\jmath_{\L}$ along a loop in $C_z$, 
where 
$p_1,q_1,p_2,q_2\in \Z$ satisfy $p_1+q_1+p_2+q_2\in 2\Z$.
By the first formula (\ref{eq:basic-theta}), this set is invariant 
under this continuation.
Under the actions of generators 
$$
\begin{pmatrix}
1 & 2\\
0 & 1
\end{pmatrix},\quad 
\begin{pmatrix}
0 & -1\\
1 & 0
\end{pmatrix}
$$
of the Igusa group $\G_{1,2}$, 
we have
\begin{align*}
&\h_{0,0}(y,\tau+2)=\h_{0,0}(y,\tau),\\
&\h_{1,1}(y,\tau+2)=\i\h_{1,1}(y,\tau),\\
&\h_{0,0}(y/\tau,-1/\tau)=
\sqrt{-\i\tau}\exp(\pi\i y^2/\tau)\h_{0,0}(y,\tau),\\
&\h_{1,1}(y/\tau,-1/\tau)=
-\i\sqrt{-\i\tau}\exp(\pi\i y^2/\tau)\h_{1,1}(y,\tau).
\end{align*}
This analytic set is also invariant under these actions.
By putting 
$$\frac{\h_{1,1}(y_1(x),\tau(x))}{\h_{1,1}(y_2(x),\tau(x))}=
c\frac{\h_{0,0}(y_1(x),\tau(x))}{\h_{0,0}(y_2(x),\tau(x))},
\quad (c=\pm1,\pm\i)
$$
we consider the limit $P\to P_0$.
Since $y_1(x)$ and $y_2(x)$ converge to $0$, the right hand side 
converges to $c$. On the other hand, the left hand side  is indefinite 
of type $0/0$.  
We have 
\begin{align*}
& \lim_{P\to P_0} \frac{\h_{1,1}(\int_{P_0}^P \f_1,\tau(x))}
{\h_{1,1}(\int_{P_0}^P \f_2,\tau(x))}
=\lim_{P\to P_0}\frac{\h_{1,1}'(\int_{P_0}^P \f_1,\tau(x))}
{\h_{1,1}'(\int_{P_0}^P \f_2,\tau(x))}
\frac{\f_1(P)}{\f_2(P)}=\frac{\f_1(P_0)}{\f_2(P_0)}\\
=&\frac{\eta_1(P_0)/\int_{\b_1}\eta_1+ \i\eta_2(P_0)/\int_{\b_2}\eta_2} 
{\i\eta_1(P_0)/\int_{\b_1}\eta_1+ \eta_2(P_0)/\int_{\b_2}\eta_2} 
=
\frac{\eta_1(P_0)/\int_{\b_1}\eta_1} 
{\i\eta_1(P_0)/\int_{\b_1}\eta_1} =-\i
\end{align*}
by l'H\^{o}pital's rule, 
the identity in Proof of Corollary \ref{cor:ex-dual-basis} 
and $\eta_2(P_0)=0$ shown in Proposition \ref{prop:sigma-action}.
\end{proof}

\begin{theorem}
\label{th:z-express}
For $\tau=\tau(x)\in \H$, we have
$$z=\frac{1-x_1-x_2}{(1-x_1)(1-x_2)}
=\frac{4\h_{0,1}(0,\tau)^4\h_{1,0}(0,\tau)^4}{\h_{0,0}(0,\tau)^8}.
$$
\end{theorem}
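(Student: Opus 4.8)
The plan is to read this identity off from the branched-value computation already carried out inside the proof of Lemma \ref{lem:v(v-1)/(v-1/z)}, combined with Jacobi's identity for the theta constants. Recall that in that proof we determined the normalizing constant $\kappa$ to be $1$, so that the pull back of $\frac{\h_{1,1}(y,\tau)^4}{\h_{0,0}(y,\tau)^4}$ along $\jmath_{\L_i}$ coincides exactly with $\frac{v(v-1)}{v-1/z}$, and its two branched values are precisely
$$\frac{\h_{1,0}(0,\tau)^4}{\h_{0,1}(0,\tau)^4},\qquad \frac{\h_{0,1}(0,\tau)^4}{\h_{1,0}(0,\tau)^4}.$$
These are the two roots of the quadratic \eqref{eq:branch-equation}, namely $\l^2+(2-\frac{4}{z})\l+1=0$, so by Vieta's formulas their sum equals $\frac{4}{z}-2$; their product equals $1$, consistent with the two quotients being mutually reciprocal.

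First I would set $a=\h_{0,1}(0,\tau)^4$ and $b=\h_{1,0}(0,\tau)^4$, and rewrite the sum of the branched values as
$$\frac{b}{a}+\frac{a}{b}=\frac{a^2+b^2}{ab}=\frac{(a+b)^2}{ab}-2.$$
Equating this with $\frac{4}{z}-2$ gives $\frac{(a+b)^2}{ab}=\frac{4}{z}$, hence
$$z=\frac{4ab}{(a+b)^2}=\frac{4\,\h_{0,1}(0,\tau)^4\h_{1,0}(0,\tau)^4}{(a+b)^2}.$$
The final step is to invoke Jacobi's identity $\h_{0,1}(\tau)^4+\h_{1,0}(\tau)^4=\h_{0,0}(\tau)^4$, i.e. $a+b=\h_{0,0}(0,\tau)^4$, which turns the denominator $(a+b)^2$ into $\h_{0,0}(0,\tau)^8$ and yields the claimed formula.

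Since all of the analytic input (the degree of $\jmath_{\L_i}$, the divisor of the theta quotient, and especially the normalization $\kappa=1$) has already been supplied by Lemma \ref{lem:v(v-1)/(v-1/z)}, the argument here is purely algebraic and presents no substantial obstacle. The one point that deserves care is confirming that the two theta quotients above are genuinely the branched values, and not merely constant multiples of them; this is exactly where the identification $\kappa=1$ enters, so the whole derivation rests on that normalization together with Jacobi's identity.
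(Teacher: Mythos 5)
Your proposal is correct and follows essentially the same route as the paper: extract the two branched values of $\frac{v(v-1)}{v-1/z}$ from the proof of Lemma \ref{lem:v(v-1)/(v-1/z)} (with the normalization $\kappa=1$), sum them via the quadratic \eqref{eq:branch-equation} to get $\frac{4}{z}-2$, solve for $z$, and finish with Jacobi's identity. The only difference is cosmetic: the paper writes the intermediate expression as $4\big/\big(\frac{\h_{0,1}^2}{\h_{1,0}^2}+\frac{\h_{1,0}^2}{\h_{0,1}^2}\big)^2$, which is algebraically identical to your $\frac{4ab}{(a+b)^2}$.
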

\begin{proof}
By Proof of Lemma \ref{lem:v(v-1)/(v-1/z)}, the branched values of 
$\dfrac{v(v-1)}{v-1/z}$ are 
$$\frac{\h_{0,1}(0,\tau)^4}{\h_{1,0}(0,\tau)^4},\quad 
  \frac{\h_{1,0}(0,\tau)^4}{\h_{0,1}(0,\tau)^4}.$$
By (\ref{eq:branch-equation}), we have 
$$\frac{\h_{0,1}(0,\tau)^4}{\h_{1,0}(0,\tau)^4}+
\frac{\h_{1,0}(0,\tau)^4}{\h_{0,1}(0,\tau)^4}=\frac{4}{z}-2,
$$
$$z=4\Big/
\Big(\frac{\h_{0,1}(0,\tau)^2}{\h_{1,0}(0,\tau)^2}+
\frac{\h_{1,0}(0,\tau)^2}{\h_{0,1}(0,\tau)^2}\Big)^2
=\frac{4\h_{0,1}(0,\tau)^4\h_{1,0}(0,\tau)^4}
{(\h_{0,1}(0,\tau)^4+\h_{1,0}(0,\tau)^4)^2}.
$$
Use Jacobi's identity.
\end{proof}

\begin{theorem}
\label{th:(1-v)-theta-exp}
For $\tau=\tau(x)\in \H$, we have 
\begin{equation}
\label{eq:(1-v)-theta}
1-v=\frac{\h_{0,0}(\tau)^4}{4\h_{0,1}(\tau)^2\h_{1,0}(\tau)^2}
\Big(
\frac{\h_{0,1}(y_1,\tau)\h_{1,0}(y_1,\tau)}{\h_{0,0}(y_1,\tau)^2}+
\frac{\h_{0,1}(y_2,\tau)\h_{1,0}(y_2,\tau)}{\h_{0,0}(y_2,\tau)^2}
\Big)^2,
\end{equation}
$$y_1=(1-\sigma^2)\cdot \int_{P_0}^P \f_1=\int_{P_0}^P 2\f_1,
\quad 
y_2=(1-\sigma^2)\cdot\int_{P_0}^P \f_2=\int_{P_0}^P 2\f_2,
$$
as meromorphic functions on $C_z$.
\end{theorem}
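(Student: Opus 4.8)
The plan is to prove the identity by matching divisors on $C_z$ of the two sides, both viewed as meromorphic functions, and then fixing the constant by evaluating at $P_0$. Write $g(y)=\dfrac{\h_{0,1}(y,\tau)\h_{1,0}(y,\tau)}{\h_{0,0}(y,\tau)^2}$ and $G=g(y_1)+g(y_2)$, so the right-hand side is $\dfrac{\h_{0,0}(\tau)^4}{4\h_{0,1}(\tau)^2\h_{1,0}(\tau)^2}\,G^2$. The first point to record is that, by the quasi-periodicity in (\ref{eq:basic-theta}), $g$ is \emph{even} and \emph{anti-periodic}, i.e. $g(y+1,\tau)=g(y+\tau,\tau)=-g(y,\tau)$; hence $g$ is not a function on $T_\tau$, but $g^2$ is, and by Lemma \ref{lem:Lambda-index2} the monodromy of $(y_1,y_2)$ along loops in $C_z$ changes $(y_1,y_2)$ only by $(p_1\tau+q_1,p_2\tau+q_2)$ with $p_1+q_1+p_2+q_2$ even. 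Under such a change $g(y_1)$ and $g(y_2)$ acquire the \emph{same} sign $(-1)^{p_1+q_1}=(-1)^{p_2+q_2}$, so $G^2$ is a well-defined meromorphic function on $C_z$. The left-hand side $1-v$ has the evident divisor $4P_1-4P_\infty$, since $P_1$ and $P_\infty$ are totally ramified for $\pr$.

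Next I would locate the poles of $G^2$. As $g$ has its only singularity, a double pole, at $y\equiv(\tau+1)/2$, the function $G$ can be singular only over $\jmath_{\L_1}^{-1}((\tau+1)/2)=\jmath_{\L_2}^{-1}((\tau+1)/2)=\{P_{1/z},P_\infty\}$ by Proposition \ref{prop:images}. The crucial local input is the behaviour of $\f_1,\f_2$ at these two fixed points of $\sigma^2$: writing $\f_1=A_1\eta_1+A_2\eta_2$, the expansion (\ref{eq:ex-dual-basis}) together with $\sigma^*\eta_1=-\i\eta_1$, $\sigma^*\eta_2=\i\eta_2$ gives $\f_2=\i A_1\eta_1-\i A_2\eta_2$, so using the zero divisors (\ref{eq:zero-div}) ($\eta_1$ vanishes at $P_{1/z}$, $\eta_2$ at $P_\infty$) one obtains
$$\f_2(P_{1/z})=-\i\,\f_1(P_{1/z}),\qquad \f_2(P_\infty)=\i\,\f_1(P_\infty).$$
Taking a local parameter $t$ (a branch of $w$), so that $y_i$ expands as its limiting value plus $2\f_i\,t+\cdots$, and remembering that the two ends carry \emph{different} lifts of the singular value — $y_1,y_2\to(\tau+1)/2$ at $P_{1/z}$, but $y_1\to(\tau+1)/2$ while $y_2\to(\tau-1)/2$ at $P_\infty$ — the leading Laurent coefficients of $g(y_1)$ and $g(y_2)$ are equal at $P_{1/z}$ and opposite (by anti-periodicity) at $P_\infty$. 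Since $\f_2^2=-\f_1^2$ at both points, the two double poles \emph{cancel} at $P_{1/z}$ and \emph{add} at $P_\infty$. Finally $G$ is $\sigma^2$-invariant near each fixed point (because $g$ is even and $y_i\circ\sigma^2=-y_i$ up to an element of $\Z\tau+\Z$ of even weight), so its expansion in $t$ has only even powers; this kills the residual simple-pole term at $P_{1/z}$. Hence $G$ is holomorphic and nonzero at $P_{1/z}$ and has exactly a double pole at $P_\infty$, so the pole divisor of $G^2$ is $4P_\infty$.

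For the zeros the anti-periodicity is again decisive. The honest values from Proposition \ref{prop:images} are $y_1(P_1)=1$, $y_2(P_1)=0$, whence
$$G(P_1)=g(1)+g(0)=-g(0)+g(0)=0.$$
As $P_1$ is a fixed point of $\sigma^2$, the same evenness argument forces $G$ to vanish there to even order; since $G$ has total pole degree $2$, it must have precisely a double zero at $P_1$ and no other zeros. Therefore $[G^2]=4P_1-4P_\infty=[1-v]$, so $1-v=\kappa\,G^2$ for a nonzero $\kappa$ depending only on $\tau$. Evaluating at $P_0$, where $v=0$ and $y_1=y_2=0$, yields $1=\kappa\,(2g(0))^2=\kappa\cdot 4\,\h_{0,1}(\tau)^2\h_{1,0}(\tau)^2/\h_{0,0}(\tau)^4$, i.e. $\kappa=\h_{0,0}(\tau)^4/(4\h_{0,1}(\tau)^2\h_{1,0}(\tau)^2)$, exactly the stated prefactor.

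I would flag the pole analysis of the second step as the main obstacle: the whole identity hinges on the sign bookkeeping forced by the anti-periodicity of $g$, which must be read off from the \emph{honest} theta arguments $y_1,y_2$ coming from the prescribed paths rather than from their classes in $T_\tau$. It is precisely this — together with $\f_2=\pm\i\f_1$ at the two ramification points — that produces the asymmetry (cancellation at $P_{1/z}$, reinforcement at $P_\infty$) without which the two sides could not share the divisor $4P_1-4P_\infty$.
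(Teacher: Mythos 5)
Your argument is correct, and its global skeleton is the same as the paper's: both show that $G^2=\bigl(g(y_1)+g(y_2)\bigr)^2$ is single-valued on $C_z$ via Lemma \ref{lem:Lambda-index2} and the sign rule $g(y+p\tau+q)=(-1)^{p+q}g(y)$, both compute the divisor to be $4P_1-4P_\infty=[1-v]$, and both normalize at $P_0$. Where you genuinely diverge is the treatment of the two points lying over $\tfrac{\tau+1}{2}$. The paper rules out a pole at $P_{1/z}$ by invoking Theorem \ref{th:image-Schwarz}: it uses $\h_{0,0}(y_1,\tau)\h_{1,1}(y_2,\tau)=\i\,\h_{1,1}(y_1,\tau)\h_{0,0}(y_2,\tau)$ to rewrite $\Theta$ over the common denominator $\h_{0,0}(y_1,\tau)^2\h_{1,1}(y_2,\tau)^2$ and then shows the numerator vanishes to order $\ge 2$ by an evenness argument. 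You instead perform a direct Laurent computation: the relations $\f_2=-\i\f_1$ at $P_{1/z}$ and $\f_2=\i\f_1$ at $P_\infty$ (correctly read off from \eqref{eq:ex-dual-basis}, $\sigma^*\eta_1=-\i\eta_1$, $\sigma^*\eta_2=\i\eta_2$ and the zero divisors \eqref{eq:zero-div}), together with the anti-periodicity of $g$ distinguishing the honest limits $\tfrac{\tau+1}{2}$ and $\tfrac{\tau-1}{2}$, force cancellation of the $t^{-2}$ terms at $P_{1/z}$ and reinforcement at $P_\infty$; and you rightly notice that cancellation of the leading terms alone is not enough, supplying the $\sigma^2$-parity argument ($t\mapsto -t$, $G\mapsto G$ near a fixed point of $\sigma^2$) to kill the residual $t^{-1}$ coefficient. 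This makes your proof independent of Theorem \ref{th:image-Schwarz}, at the cost of more delicate sign bookkeeping; the paper's route outsources exactly that bookkeeping to the analytic-set relation. Two cosmetic points: $G$ itself is only defined up to sign on $C_z$ (only $G^2$ is a genuine meromorphic function), so the degree count ``pole degree $2$ forces zero degree $2$'' should formally be run on $G^2$ or on the well-defined local orders of $G$; and the phrase ``$y_i\circ\sigma^2=-y_i$ up to an element of $\Z\tau+\Z$ of even weight'' is best made precise by noting that, for paths kept inside a small disc around the fixed point, $y_i(\sigma^2P)=2y_i(P_{1/z})-y_i(P)$ with $2y_i(P_{1/z})=\tau+1$, so evenness plus anti-periodicity applied twice gives $g(y_i(\sigma^2P))=g(y_i(P))$. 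Neither point affects the validity of the proof.
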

\begin{proof}
At first, we show that the right hand side (\ref{eq:(1-v)-theta}) is single 
valued on $C_z$.
If we make analytic continuations of 
$y_1,y_2$ along a cycle in $C_z$, then they become 
$$y_1+p_1\tau+q_1,\quad  y_2+p_2\tau+q_2, 
$$ 
$p_1,p_2,q_1,q_2\in \Z$, $p_1+p_2+q_1+q_2\in 2\Z$  
by Lemma \ref{lem:Lambda-index2}.
This analytic continuation transforms
$$\Theta(P)=
\frac{\h_{0,1}(y_1,\tau)\h_{1,0}(y_1,\tau)}{\h_{0,0}(y_1,\tau)^2}+
\frac{\h_{0,1}(y_2,\tau)\h_{1,0}(y_2,\tau)}{\h_{0,0}(y_2,\tau)^2}
$$
into 
$$
(-1)^{p_1+q_1}\frac{\h_{0,1}(y_1,\tau)\h_{1,0}(y_1,\tau)}{\h_{0,0}(y_1,\tau)^2}+
(-1)^{p_2+q_2}\frac{\h_{0,1}(y_2,\tau)\h_{1,0}(y_2,\tau)}{\h_{0,0}(y_2,\tau)^2}
$$
by the first of (\ref{eq:basic-theta}).
Its square is invariant under this analytic continuation 
since $p_1+q_1\equiv p_2+q_2$ $\bmod 2$. 
Next, we show that $\Theta(P)^2$
becomes zero of order $4$ at $P_1$, and  pole of order $4$ at $P_\infty$, 
and it does not have other zeros and poles. 
By Proposition \ref{prop:images} and the first of (\ref{eq:basic-theta}), 
we have 
$$\Theta(P_1)=\frac{\h_{0,1}(1,\tau)\h_{1,0}(1,\tau)}
{\h_{0,0}(1,\tau)^2}+
\frac{\h_{0,1}(0,\tau)\h_{1,0}(0,\tau)}
{\h_{0,0}(0,\tau)^2}
=0.
$$
Since $\h_{0,0}(y,\tau)$, $\h_{0,1}(y,\tau)$ and $\h_{1,0}(y,\tau)$ 
are even functions with respect to $y$, 
the order of zero of $\Theta(P)$ at $P_1$ is greater than or equal to $2$.  
It is well known that $\h_{0,0}(y,\tau)$ vanishes only at 
$\dfrac{\tau+1}{2}$ of first order modulo the lattice $\tau\Z+\Z$.  
Thus the denominators of 
$\dfrac{\h_{0,1}(y_1,\tau)\h_{1,0}(y_1,\tau)}{\h_{0,0}(y_1,\tau)^2}$ and
$\dfrac{\h_{0,1}(y_2,\tau)\h_{1,0}(y_2,\tau)}{\h_{0,0}(y_2,\tau)^2}$
vanish only at $P_{1/z}$ and $P_\infty$ by 
Proposition \ref{prop:images}.
By Theorem \ref{th:image-Schwarz}, we have 
\begin{align*}
\Theta(P)=&
\frac{\h_{0,1}(y_1,\tau)\h_{1,0}(y_1,\tau)}{\h_{0,0}(y_1,\tau)^2}-
\frac{\h_{0,1}(y_2,\tau)\h_{1,0}(y_2,\tau)\h_{1,1}(y_1,\tau)^2}
{\h_{0,0}(y_1,\tau)^2\h_{1,1}(y_2,\tau)^2}\\
=&
\frac{\h_{0,1}(y_1,\tau)\h_{1,0}(y_1,\tau)\h_{1,1}(y_2,\tau)^2
-\h_{0,1}(y_2,\tau)\h_{1,0}(y_2,\tau)\h_{1,1}(y_1,\tau)^2}
{\h_{0,0}(y_1,\tau)^2\h_{1,1}(y_2,\tau)^2}.
\end{align*}
Its numerator vanishes at $P_{1/z}$ since  
$y_1$ and $y_2$ become 
$\dfrac{\tau+1}{2}$ by Proposition \ref{prop:images}.   
Since $\h_{1,1}(y+\dfrac{\tau+1}{2},\tau)$, 
$\h_{0,1}(y+\dfrac{\tau+1}{2},\tau)$ and $\h_{1,0}(y+\dfrac{\tau+1}{2},\tau)$ 
are even functions with respect to $y$, 
the numerator of $\Theta(P)$ vanishes at $P_{1/z}$ 
of order greater than or equal to $2$, and it cancels 
the zero of the denominator of $\Theta(P)$ at $P_{1/z}$.  
Hence $P_{1/z}$ is not a pole of $\Theta(P)$.
On the other hand, the numerator of $\Theta(P)$ does not vanish at $P_{\infty}$
by the first of (\ref{eq:basic-theta}) 
since  
$y_1$ and $y_2$ become $\dfrac{\tau+1}{2}$ and $\dfrac{\tau-1}{2}$
at $P_{\infty}$, respectively, by Proposition \ref{prop:images}.   
Hence, $\Theta(P)^2$ admits poles only on $P_{\infty}$ of order $4$, 
and it vanishes only on $P_1$ of order $4.$ 

Since the divisor of $\Theta(P)^2$ coincides with 
that of the meromorphic function $1-v$ on $C_z$, 
$1-v$ is equal to a constant multiple of $\Theta(P)^2$.
By putting $P=P_0$, we can determine this constant. 
\end{proof}

\begin{theorem}
\label{th:inv-period}
The inverse of Schwarz's map $\mathbf{f}$ for $\cF_2$ is expressed as 
\begin{align*}
x_1=&
\frac{\h_{0,0}(0,\tau(x))^4}{4\h_{0,1}(0,\tau(x))^2\h_{1,0}(0,\tau(x))^2}\\
&\cdot 
\Big(
\frac{\h_{0,1}(y_1(x),\tau(x))\h_{1,0}(y_1(x),\tau(x))}
{\h_{0,0}(y_1(x),\tau(x))^2}
+\frac{\h_{0,1}(y_2(x),\tau(x))\h_{1,0}(y_2(x),\tau(x))}
{\h_{0,0}(y_2(x),\tau(x))^2}\Big)^2,
\\
x_2=&
\frac{\h_{0,0}(0,\tau(x))^4}{4\h_{0,1}(0,\tau(x))^2\h_{1,0}(0,\tau(x))^2}\\
&\cdot 
\Big(
-\frac{\h_{0,1}(y_1(x),\tau(x))\h_{1,0}(y_1(x),\tau(x))}
{\h_{0,0}(y_1(x),\tau(x))^2}
+\frac{\h_{0,1}(y_2(x),\tau(x))\h_{1,0}(y_2(x),\tau(x))}
{\h_{0,0}(y_2(x),\tau(x))^2}\Big)^2, 
\end{align*}
where $(y_1(x),y_2(x),\tau(x))\in \C^2\times \H$ is the image of 
$x=(x_1,x_2)$ under Schwarz's map $\mathbf{f}$ for $\cF_2$ as 
in \eqref{eq:image-Schwarz}.
\end{theorem}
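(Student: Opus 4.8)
The plan is to read off $x_1$ and $x_2$ by evaluating the meromorphic identity of Theorem \ref{th:(1-v)-theta-exp} at the two distinguished points $P_{1-x_1}$ and $P_{1-x_2}$ of $C_z$. To keep the bookkeeping transparent I would abbreviate
$$g(y)=\frac{\h_{0,1}(y,\tau)\h_{1,0}(y,\tau)}{\h_{0,0}(y,\tau)^2},$$
so that Theorem \ref{th:(1-v)-theta-exp} reads $1-v=\dfrac{\h_{0,0}(\tau)^4}{4\h_{0,1}(\tau)^2\h_{1,0}(\tau)^2}\big(g(y_1)+g(y_2)\big)^2$ with $y_i=\int_{P_0}^P 2\f_i$. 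Everything then reduces to computing the ordered pair $(y_1,y_2)$ at the two special points and tracking the behaviour of $g$ under the resulting lattice shifts.

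First I would treat $P=P_{1-x_1}$. Here $v=1-x_1$, so $1-v=x_1$, while $(y_1,y_2)=\jmath_\L(P_{1-x_1})=(y_1(x),y_2(x))$ by \eqref{eq:P(1-x1)-express}. Substituting into Theorem \ref{th:(1-v)-theta-exp} produces the asserted formula for $x_1$ verbatim. The case $P=P_{1-x_2}$ is where the real content lies: now $1-v=x_2$, and comparing \eqref{eq:P(1-x2)-express} with \eqref{eq:P(1-x1)-express} gives $\int_{P_0}^{P_{1-x_2}}2\f_2=y_2(x)$ together with $\int_{P_0}^{P_{1-x_2}}2\f_1=1-y_1(x)$, since the second coordinates coincide and the two first coordinates sum to $1$. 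The crucial observation is that the consistent path-integral lift of the first coordinate is $1-y_1(x)$, an \emph{odd} translate of $-y_1(x)$, rather than $-y_1(x)$ itself. Using the first line of \eqref{eq:basic-theta} with $(p,q)=(0,1)$, namely $\h_{1,0}(y+1,\tau)=-\h_{1,0}(y,\tau)$ while $\h_{0,0}$ and $\h_{0,1}$ are unchanged, together with the evenness $\h_{k,\ell}(-y,\tau)=(-1)^{k\ell}\h_{k,\ell}(y,\tau)$, one finds $g(1-y_1(x))=-g(-y_1(x))=-g(y_1(x))$. Hence $g(y_1)+g(y_2)$ evaluated at $P_{1-x_2}$ equals $-g(y_1(x))+g(y_2(x))$, whose square is exactly the stated expression for $x_2$.

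Finally, since each side of the two identities is holomorphic in $x$, and they have been established on the real locus \eqref{eq:assum-x1-x2} where Theorem \ref{th:rel-Prym-Periods} applies, I would extend them to all of $X$ (equivalently to $\widetilde X$) by analytic continuation. The main obstacle, and the one point deserving care, is precisely the sign bookkeeping in the second step: one must verify that the lift dictated by the path to $P_{1-x_2}$ is $1-y_1(x)$ and not $-y_1(x)$, so that the odd shift yields the minus sign separating the formula for $x_2$ from that for $x_1$. This is legitimate because $\big(g(y_1)+g(y_2)\big)^2$ is a genuine single-valued, path-independent meromorphic function on $C_z$ — as established in the proof of Theorem \ref{th:(1-v)-theta-exp} via the parity constraint $p_1+q_1\equiv p_2+q_2\bmod 2$ on admissible cycles — so its value at $P_{1-x_2}$ is computed correctly from any one consistent choice of path.
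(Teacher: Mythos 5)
Your proposal is correct and follows essentially the same route as the paper: evaluate the identity of Theorem \ref{th:(1-v)-theta-exp} at $P_{1-x_1}$ to get $x_1$, then at $P_{1-x_2}$, where comparison of \eqref{eq:P(1-x1)-express} and \eqref{eq:P(1-x2)-express} gives the lift $(1-y_1(x),y_2(x))$, and the first and second formulas of \eqref{eq:basic-theta} produce the sign change $g(1-y_1(x))=-g(y_1(x))$. The paper phrases this second step as the symmetry $x\mapsto x'=(x_2,x_1)$ with $y_1(x')=1-y_1(x)$, $y_2(x')=y_2(x)$, which is the identical computation; your added remarks on the admissibility of the lattice shift and on single-valuedness are correct supplementary care.
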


\begin{proof}
We have only to put $P=P_{1-x_1}$ for the expression \eqref{eq:(1-v)-theta}
to obtain the expression of $x_1$.  
Let $x'$ be $(x_2,x_1)$ for $x=(x_1,x_2)$.  
Then we have 
$$y_1(x')=-y_1(x)+1,\quad y_2(x')=y_2(x)
$$
by \eqref{eq:P(1-x1)-express} and \eqref{eq:P(1-x2)-express}.
Thus the expression of $x_2$ is obtained from  
the expression of $x_1$ together with the first and second of 
\eqref{eq:basic-theta}.
\end{proof}

\begin{appendix}
\section{The rational function field $\C(v,w)$ of $C_z$}
In appendix, we study the structure of 
the rational function field $\C(v,w)$ of $C_z:w^4=v^3(1-v)(1-vz)$.
By using results in appendix, 
we explain our construction of the expression \eqref{eq:(1-v)-theta},
which gives an alternative proof of Theorem \ref{th:(1-v)-theta-exp}.

Here recall that 
$$z=\frac{1-x_1-x_2}{(1-x_1)(1-x_2)}$$
for $(x_1,x_2)\in X$, and  
fix $(x_1,x_2)\in X$ satisfying \eqref{eq:assum-x1-x2}.
Since $0<z<1$, we assign 
$\sqrt{z}$, $\sqrt[4]{z}$ and $\sqrt{1-z}$ positive real values.

\subsection{An involution $\iota$}
The automorphism $\sigma:C_z\ni (v,w)\mapsto (v,\i w)$ acts naturally 
on the field $\C(v,w)$ of rational functions on $C_z$.
We define an involution $\iota$ on $\C(v)$ by 
$$\iota(v)=\frac{1-v}{1-vz}.$$

This involution is extended to that on $\C(v,w)$
by 
$$\iota(w)=\frac{v(1-v)\sqrt{1-z}}{(1-vz)w}.$$
This extended involution induces an involution of the curve $C_z$ itself, 
which is also denoted by $\iota$. 
By a straight forward calculation, we can see that 
$\sigma$ and $\iota$ satisfy 
relations
$$
\sigma^4=\iota^2=\mathrm{id},\quad \iota\sigma =\sigma^{-1} \iota,
$$
where $\mathrm{id}$ denotes the identity.
Thus the group generated by $\sigma$ and $\iota$ is isomorphic to 
the dihedral group $D_4=\la \sigma \ra \rtimes\la \iota\ra$ of order $8$.
The fixed field $\C(v)^\iota$ of $\C(v)$ under $\iota$ is 
$$\C(\frac{v(1-v)}{1-vz}).$$
By setting $u=w^2$ and $t=\dfrac{v(1-v)}{1-vz}$, we have the following 
inclusions and the corresponding projections between algebraic curves:
\begin{equation}
\label{eq:inclusions}
\begin{array}{ccl}
\C(v,w) & \supset & \C(v,w)^\iota \\
\cup    &         & \hspace{5mm}\cup \\
\C(u,v) & \supset & \C(u,v)^\iota \\
\cup    &         & \hspace{5mm}\cup \\
\C(v) & \supset & \C(v)^\iota=\C(t), \\   
  \end{array}
\quad 
\begin{array}{ccl}
C_z & \overset{\pr_{\iota}}{\to} & C_z/\la\iota\ra\simeq T_\tau  \\
\downarrow_{\pr_{wu}}    &         & \hspace{5mm}\downarrow_{\pr_{s}} \\
C_z/\la \sigma^2\ra=E_z  & \overset{\pr_{us}}\to & C_z/\la \sigma^2,\iota \ra 
\simeq E_z/\la \iota \ra\\
\downarrow_{\pr_{uv}}    &         & \hspace{5mm}\downarrow_{\pr_{st}} \\
C_z/\la \sigma\ra\simeq\P^1_v & \overset{\pr_{vt}}{\to} 
& C_z/\la \sigma,\iota \ra\simeq 
\P^1_t. \\   
  \end{array}
\end{equation}
Here note that $u,v$ are invariant under the action $\sigma^2$, and that  
the quotient space $E_z=C_z/\la \sigma^2\ra$ is represented by an 
algebraic curve 
\begin{equation}
\label{eq:def-E}
u^2=v^3(1-v)(1-vz),
\end{equation}
which is equivalent to a non-singular cubic curve 
$$
{u'}^2=v(1-v)(1-vz)
$$
with respect to variables $v$ and $u'=(u/v)$. 

By analyzing the branch locus, we find a generator of the field  
$\C(u,v)^\iota$ over $\C$. 
\begin{lemma}
\label{lem:vars-fix}
The involution $\iota$ fixes an element 
$\dfrac{u}{v(1-vz)}\in \C(u,v)$, i.e.,  
$$\iota\big(\frac{u}{v(1-vz)}\big)=\frac{u}{v(1-vz)}.
$$
\end{lemma}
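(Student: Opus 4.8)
The plan is to verify the claimed identity by a direct computation in the field $\C(u,v)$, using the defining relation $u^2=v^3(1-v)(1-vz)$ of the curve $E_z$ together with the explicit formula for $\iota$ on $\C(v,w)$. Since the statement is an equality of rational functions, no geometry is needed beyond this one relation.

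First I would record how $\iota$ acts on the elementary building blocks. Because $u=w^2$, squaring the prescribed value $\iota(w)=\dfrac{v(1-v)\sqrt{1-z}}{(1-vz)w}$ gives
$$\iota(u)=\frac{v^2(1-v)^2(1-z)}{(1-vz)^2\,u};$$
note that squaring removes the radical $\sqrt{1-z}$, so no choice of branch enters and $\iota(u)$ is an honest element of $\C(u,v)$. Next I would compute the only other ingredient needed, namely $\iota(1-vz)$: substituting $\iota(v)=\dfrac{1-v}{1-vz}$ yields
$$\iota(1-vz)=1-z\cdot\frac{1-v}{1-vz}=\frac{1-z}{1-vz}.$$

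Then I would assemble these pieces. Writing $\iota\!\left(\dfrac{u}{v(1-vz)}\right)=\dfrac{\iota(u)}{\iota(v)\,\iota(1-vz)}$ and inserting the three formulas above, the factors $(1-vz)^2$, $(1-z)$ and one factor $(1-v)$ all cancel, leaving
$$\iota\!\left(\frac{u}{v(1-vz)}\right)=\frac{v^2(1-v)}{u}.$$
Finally I would close the loop using the defining equation $u^2=v^3(1-v)(1-vz)$: multiplying numerator and denominator of $\dfrac{v^2(1-v)}{u}$ by $v(1-vz)$ and replacing $v^3(1-v)(1-vz)$ by $u^2$ turns it into $\dfrac{u^2}{u\,v(1-vz)}=\dfrac{u}{v(1-vz)}$, which is exactly the assertion.

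I do not expect any genuine obstacle here: the whole argument is a short rational-function manipulation. The one point that merits a word of care is that passing from $w$ to $u=w^2$ eliminates the sign ambiguity of $\sqrt{1-z}$, so the identity lives entirely within the $\iota$-stable subfield $\C(u,v)$ and the verification is unambiguous.
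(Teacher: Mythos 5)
Your proposal is correct and follows essentially the same route as the paper: compute $\iota(1-vz)=\frac{1-z}{1-vz}$, use $\iota(w)^2$ to get $\iota\bigl(\frac{u}{v(1-vz)}\bigr)=\frac{v^2(1-v)}{u}$, and then invoke the defining relation $u^2=v^3(1-v)(1-vz)$ to identify this with $\frac{u}{v(1-vz)}$. No differences worth noting.
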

\begin{proof}
Since 
$$
\iota(1-vz)=1-z\frac{1-v}{1-vz}=\frac{1-z}{1-vz},
$$
we have 
$$\iota(\frac{u}{v(1-vz)})=\iota(\frac{w^2}{v}\cdot 
\frac{1}{1-vz})=
\big(\frac{v(1-v)\sqrt{1-z}}{(1-vz)w}\big)^2\cdot\frac{1-vz}{1-v}\cdot 
\frac{1-vz}{1-z} 
=\frac{v^2(1-v)}{u}.
$$
We see that the last term is equal to 
$\dfrac{u}{v(1-vz)}$  
by using a relation $1-v=\dfrac{u^2}{v^3(1-vz)}$ equivalent to 
\eqref{eq:def-E}. 
\end{proof}
We set 
\begin{equation}
\label{eq:def-s}
s=\frac{u}{v(1-vz)}=\frac{v^2(1-v)}{u}=\frac{w^2}{v(1-vz)}=\frac{v^2(1-v)}{w^2}.
\end{equation}

\begin{proposition}
\label{prop:gen-s}
We have 
$$\C(u,v)^\iota=\C(s).$$
\end{proposition}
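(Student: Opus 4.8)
The plan is to prove the equality by a degree count in the tower $\C(s)\subseteq \C(u,v)^\iota\subseteq \C(u,v)$. Since $\iota$ generates a faithful action of a group of order $2$ on $\C(u,v)$ (indeed $\iota(v)=\frac{1-v}{1-vz}\ne v$), Artin's theorem gives $[\C(u,v):\C(u,v)^\iota]=2$, and Lemma \ref{lem:vars-fix} already places $s$ inside $\C(u,v)^\iota$. Hence it suffices to show that $[\C(u,v):\C(s)]=2$; the sandwich $\C(s)\subseteq\C(u,v)^\iota\subseteq\C(u,v)$ then forces $\C(s)=\C(u,v)^\iota$.

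First I would rewrite the generator as $u=s\,v(1-vz)$, read off directly from $s=\dfrac{u}{v(1-vz)}$. This exhibits $u$ as an element of $\C(s,v)$, so that $\C(u,v)=\C(s,v)=\C(s)(v)$; in particular the extension $\C(u,v)/\C(s)$ is generated by $v$ alone. Substituting $u=s\,v(1-vz)$ into the defining relation \eqref{eq:def-E} of $E_z$ and cancelling the common factor $v^2(1-vz)$ yields the quadratic relation
\[
v^2-(1+s^2 z)\,v+s^2=0
\]
for $v$ over $\C(s)$. Thus $[\C(s)(v):\C(s)]\le 2$.

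It remains to check that this quadratic is irreducible over $\C(s)$, equivalently that $v\notin\C(s)$. I would verify this by showing that its discriminant $(1+s^2z)^2-4s^2=z^2 s^4+(2z-4)s^2+1$ is not a square in the rational function field $\C(s)$: matching it against $(zs^2+as+b)^2$ forces $a=0$, $b^2=1$, and $2bz=2z-4$, which has no solution unless $z=1$. Since $z\notin\{0,1\}$ by the standing hypothesis, the discriminant is a nonsquare, the quadratic is irreducible, and $[\C(u,v):\C(s)]=[\C(s)(v):\C(s)]=2$. Combined with the first paragraph this gives $\C(u,v)^\iota=\C(s)$.

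The only point needing care is this irreducibility step, where the excluded value $z=1$ is exactly what keeps the discriminant from becoming a perfect square. Geometrically it reflects that $\iota$ is a genuine involution of the genus-one curve $E_z$ with fixed points rather than a translation, so that $E_z/\la\iota\ra\simeq\P^1_s$ is rational and the covering $s\colon E_z\to\P^1_s$ has degree $2$; the field-theoretic computation above is simply the algebraic shadow of this fact.
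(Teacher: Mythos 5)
Your proof is correct, but it runs the degree count along a different leg of the diagram \eqref{eq:inclusions} than the paper does. The paper compares $\C(u,v)^\iota$ with the \emph{smaller} field $\C(t)=\C(v)^\iota$: since $[\C(u,v):\C(t)]=[\C(u,v):\C(v)]\cdot[\C(v):\C(t)]=4$ while $[\C(u,v):\C(u,v)^\iota]=2$, the fixed field is quadratic over $\C(t)$, and the single identity $s^2=t$ together with $\iota(s)=s$ from Lemma \ref{lem:vars-fix} immediately identifies it as $\C(s)$ --- the only irreducibility input there is that $t$ is not a square in $\C(t)$, which is automatic. You instead compare $\C(u,v)$ with $\C(s)$ directly and never invoke $t$: writing $u=s\,v(1-vz)$ to get $\C(u,v)=\C(s)(v)$ and extracting the quadratic $v^2-(1+s^2z)v+s^2=0$ is a clean computation, and your discriminant argument correctly isolates $z\ne 0,1$ as what prevents reducibility. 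The trade-off is that you pay for this explicitness with the nonsquare verification, which the paper gets for free; in exchange you obtain a plane model of $E_z$ as a double cover of $\P^1_s$ whose branch locus, the zero set of $z^2s^4+(2z-4)s^2+1$, is exactly the set $\{\pm v_+,\pm v_-\}$ that the paper computes separately just after this proposition as the preimage of $t_\pm$ under $\pr_{st}$. Both routes are sound.
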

\begin{proof}
Note that $\C(u,v)^\iota$ is a quadratic extension 
of the field $\C(v)^\iota=\C(t)$. 
We have 
$$s^2=\big(\frac{u}{v(1-vz)}\big)^2=\frac{v(1-v)}{1-vz}=t,
$$
and $s$ belongs to $\C(u,v)^\iota$ by Lemma \ref{lem:vars-fix}.
Hence the field $\C(u,v)^\iota$ is generated by $s$ over $\C$.
\end{proof}

Let $\P^1_s$, $\P^1_t$ and $\P^1_v$  be projective spaces with $s$-coordinate
, with $t$-coordinate, and with $v$-coordinate, respectively. 
Then $\P^1_s$ is a double cover of $\P^1_t$ 
by Proposition \ref{prop:gen-s} and  
the branch locus of the projection $\P^1_s\to \P^1_t$
is $t=0,\infty$. 
Let $v_+$ and $v_-$ be the ramification points of the projection 
$$\pr_{vt}:\P^1_v\ni v \mapsto t=\frac{v(1-v)}{1-vz}\in \P^1_t.$$ 
In other words, $v_{\pm}$ are solutions of the equation 
$$v=\iota(v)=\frac{1-v}{1-vz},
$$
and they are expressed as
\begin{equation}
\label{eq:ramify-v}
v_\pm=\frac{1\pm\sqrt{1-z}}{z}.
\end{equation}

The branch points of $\P^1_v\to \P^1_t=\P^1_v/\la \iota \ra$ are given by 
$$t_\pm=\frac{v_\pm(1-v_\pm)}{1-v_\pm z}=v_\pm\cdot \pr_{vt}(v_{\pm})
=(v_\pm)^2.
$$
The rational function field of $E_z$ and that of $E_z/\la \iota\ra$
are isomorphic to $\C(u,v)$ and $\C(s)=\C(u,v)^\iota$, respectively.
The set of preimages of $t_\pm$ under the projection 
$$\pr_{st}:\P^1_s=E/\la \iota\ra\ni s \mapsto t=s^2\in 
\P^1_s/\la \iota\ra=\P^1_t$$ 
is given by $\{\pm v_+,\pm v_-\}$. 

We find branch points of the projection 
$$\pr_{s}: C_z/\la \iota\ra \to E_z/\la \iota\ra=\P^1_s.$$
An equation of fixed points of $\iota$ is given by $\iota(w)=w$, 
which is equivalent to 
\begin{equation}
\label{eq:iota-fix}
\frac{v(1-v)\sqrt{1-z}}{(1-vz)}=w^2.
\end{equation}
Let $F_\iota$ be the set of fixed points of $\iota$ in $C_z$.
Since $u=w^2$ and \eqref{eq:iota-fix}, 
the image of each element of $F_\iota$ under the 
projection $\pr_{wu}:C_z\to E_z$  is determined by the $v$-coordinate.
In fact, the $v$-coordinate of any fixed point is $v_+$ or $v_-$,  
the image of $F_\iota$ under $\pr_{wu}$ consists of 
\begin{equation}
\label{eq:Cz-fix}
(v,u)=(v_+,u_+),\ (v_-,u_-),\quad 
u_{\pm}=\frac{v_{\pm}(1-v_{\pm})\sqrt{1-z}}{(1-v_{\pm}z)}. 
\end{equation}
Let us consider the branching situation over $v_+$ and $v_-$ by using 
the commutative diagram  
\begin{equation}
\label{eq:CD-projections}
\begin{array}{ccl}
C_z & \overset{\pr_{\iota}}{\to} & C_z/\la\iota\ra\simeq T_\tau  \\
\downarrow_{\pr_{wu}}    &         & \hspace{5mm}\downarrow_{\pr_{s}} \\
C_z/\la \sigma^2\ra=E_z  & \overset{\pr_{us}}\to & C_z/\la \sigma^2,\iota \ra 
\simeq E_z/\la \iota \ra\simeq\P^1_s.\\
\end{array}
\end{equation}  
in \eqref{eq:inclusions}. 
As in Corollary \ref{cor:preimages}, 
$P_{v_+}$, $\sigma^2(P_{v_+})$ and  $\sigma(P_{v_-})$, $\sigma^3(P_{v_-})$
are not fixed by the action $\iota$. 
Since the images of fixed points by $\iota$ under the projection $\pr_{wu}$ 
are given by $(u_+,v_+)$, $(u_-,v_-)$, the images of 
$P_{v_+},\sigma^2(P_{v_+})$ and $\sigma(P_{v_-})$, $\sigma^3(P_{v_-})$  
under the projection $\pr_{wu}$ 
are 
$$
\pr_{wu}(P_{v_+})=\pr_{wu}(\sigma^2(P_{v_+}))=(-u_+,v_+),\quad 
\pr_{wu}(\sigma(P_{v_-}))=\pr_{wu}(\sigma^3(P_{v_-}))=(-u_-,v_-).
$$
Hence the $s$-coordinate of the images of 
$P_{v_+}$, $\sigma^2(P_{v_+})$ and $\sigma(P_{v_-})$, $\sigma^3(P_{v_-})$ 
under $\pr_{us}\circ\pr_{wu}$ 
are 
\begin{equation}
\label{eq:s-coord-branch}
s_{\pm}=\frac{-u_{\pm}}{v_{\pm}(1-v_{\pm}z)}
\end{equation}
by \eqref{eq:def-s}.

\begin{lemma}
\label{lem:ramify-prs}
The projection 
$\pr_s:C_z/\la \iota \ra\to 
C_z/\la \sigma^2,\iota \ra
\simeq \P^1_s$ is ramified at $\pr_\iota(P_{v_+})$,
$\pr_\iota(\sigma^2(P_{v_+}))$
and $\pr_\iota(\sigma(P_{v_-}))$, $\pr_\iota(\sigma^3(P_{v_-}))$.
\end{lemma}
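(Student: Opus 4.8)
The plan is to realize $\pr_s$ as the quotient of the elliptic curve $T_\tau=C_z/\la\iota\ra$ by the involution induced by $\sigma^2$, and then to exhibit the listed points as fixed points of that involution. Since $\iota\sigma=\sigma^{-1}\iota$ forces $\iota\sigma^2=\sigma^2\iota$, the automorphism $\sigma^2$ descends to an involution $\overline{\sigma^2}$ on $T_\tau$, and $\pr_s\colon T_\tau\to T_\tau/\la\overline{\sigma^2}\ra\simeq\P^1_s$ is exactly the associated double cover (indeed $(C_z/\la\iota\ra)/\la\overline{\sigma^2}\ra=C_z/\la\sigma^2,\iota\ra$). Its ramification points are precisely the fixed points of $\overline{\sigma^2}$, and $\pr_\iota(P)$ is such a fixed point exactly when $\sigma^2(P)\in\{P,\iota(P)\}$.

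The core step is to prove $\iota(P_{v_+})=\sigma^2(P_{v_+})$ and $\iota(\sigma(P_{v_-}))=\sigma^3(P_{v_-})=\sigma^2(\sigma(P_{v_-}))$. For this I would use the fibre structure of $\pr_{wu}\colon C_z\to E_z$, whose deck group is $\la\sigma^2\ra$. By the computation preceding the lemma, $P_{v_+}$ and $\sigma^2(P_{v_+})$ are the two points of $\pr_{wu}^{-1}(-u_+,v_+)$, and $(-u_+,v_+)$ is a fixed point of the induced involution $\overline\iota$ on $E_z$ (a direct computation of $(\iota w)^2$ on the sheet $w^2=-u_+$ over $v=v_+$ gives $(\iota w)^2=-u_+$ again, so $\overline\iota$ fixes $(-u_+,v_+)$; no circularity). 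Since $P_{v_+}$ is not fixed by $\iota$ on $C_z$, the involution $\iota$ must interchange the two points of this fibre, whence $\iota(P_{v_+})=\sigma^2(P_{v_+})$ and in particular $\pr_\iota(P_{v_+})=\pr_\iota(\sigma^2(P_{v_+}))$ is a single point of $T_\tau$. The identical argument applied to $\pr_{wu}^{-1}(-u_-,v_-)=\{\sigma(P_{v_-}),\sigma^3(P_{v_-})\}$ yields $\iota(\sigma(P_{v_-}))=\sigma^3(P_{v_-})$.

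Granting these relations, $\overline{\sigma^2}(\pr_\iota(P_{v_+}))=\pr_\iota(\sigma^2 P_{v_+})=\pr_\iota(\iota P_{v_+})=\pr_\iota(P_{v_+})$, so $\pr_\iota(P_{v_+})$ is a fixed point of $\overline{\sigma^2}$ and hence a ramification point of $\pr_s$; likewise $\pr_\iota(\sigma(P_{v_-}))$ is fixed, and the two remaining names coincide with these by the previous paragraph. As a consistency check one can instead run the argument through the diagram \eqref{eq:CD-projections}: $\pr_{us}$ is ramified over $s_+$ with unique preimage $(-u_+,v_+)$, and the $\pr_{wu}$-fibre $\{P_{v_+},\sigma^2 P_{v_+}\}$ over that point collapses under $\pr_\iota$ to the single point $\pr_\iota(P_{v_+})$, so the degree-two map $\pr_s$ has a single point in its fibre over $s_+$ and is therefore ramified there; the same computation over $s_-$ handles $\sigma(P_{v_-})$.

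I expect the only delicate point to be the branch-choice bookkeeping that determines, among the four points of $C_z$ over $v_+$, which two lie on the sheet $w^2=-u_+$ (the non-$\iota$-fixed ones, $P_{v_+}$ and $\sigma^2 P_{v_+}$) and which two lie on $w^2=+u_+$ (the $\iota$-fixed ones given by \eqref{eq:iota-fix}). Tracking $\arg(w)$ together with $\arg(1-v)$ and $\arg(1-vz)$ along a path from $v_-$ to $v_+$ is exactly the analysis already carried out in the passage computing $\pr_{wu}(P_{v_+})=(-u_+,v_+)$, so in the final write-up I would simply cite that computation and reduce the lemma to the short group-theoretic deduction above.
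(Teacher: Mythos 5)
Your proposal is correct. Interestingly, your ``consistency check'' paragraph is essentially the paper's own proof: the paper observes that $\sigma^{2i}(P_{v_+})$ are unramified points of $\pr_{wu}$ lying over $(-u_+,v_+)\in E_z$, that this point is fixed by the involution induced by $\iota$ on $E_z$ so that $\pr_{us}$ is ramified there, and then pushes the ramification across the commutative square \eqref{eq:CD-projections}, using that $\pr_{\iota}$ is unramified at these non-$\iota$-fixed points. Your primary argument traverses the other two sides of the same square: you quotient by $\iota$ first, realize $\pr_s$ as the quotient of $T_\tau$ by the descended involution $\overline{\sigma^2}$ (legitimate since $\iota\sigma=\sigma^{-1}\iota$ forces $\iota\sigma^2=\sigma^2\iota$), and reduce the lemma to the pointwise identities $\iota(P_{v_+})=\sigma^2(P_{v_+})$ and $\iota(\sigma(P_{v_-}))=\sigma^3(P_{v_-})$, which you extract from the same input (the $\pr_{wu}$-fibre over the $\iota$-fixed point $(-u_+,v_+)$ is $\{P_{v_+},\sigma^2(P_{v_+})\}$, and $\iota$ preserves this fibre while fixing neither point, hence swaps them). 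The two routes rest on identical computations; what your main line buys is the explicit relation $\iota(P_{v_+})=\sigma^2(P_{v_+})$, which also makes transparent that the four points listed in the statement give only two ramification points of $\pr_s$ downstairs, whereas the paper's route never needs to determine how $\iota$ permutes the fibre. A small simplification is available in your second case: since $P_{v_-}$ itself is $\iota$-fixed, the relation $\iota\sigma=\sigma^{-1}\iota$ yields $\iota(\sigma(P_{v_-}))=\sigma^{3}(\iota(P_{v_-}))=\sigma^{3}(P_{v_-})$ at once, with no fibre argument needed.
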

\begin{proof}
As mentioned above, the projection $\pr_{wu}$  is not ramified at 
$\sigma^{2i}(P_{v_+})$ $(i=0,1)$, and  
$\sigma^{2i}(P_{v_+})$ are mapped to the point 
$(-u_+,v_+)\in E_z$ under the projection 
$\pr_{wu}:C_z\to C_z/\la \sigma^2\ra= E_z$. 
Since the point $(-u_+,v_+)\in E_z$
is fixed by the action $\iota$, the projection 
$\pr_{us}:E_a\to C_z/\la \sigma^2,\iota\ra\simeq \P^1_s$ 
is ramified at these point. By the commutative diagram 
\eqref{eq:CD-projections}, it turns out that the projection $\pr_s$ 
is ramified at $\sigma^{2i}(P_{v_+})$. We can similarly show that 
the projection $\pr_s$ is ramified at $\sigma^{2i+1}(P_{v_-})$ $(i=0,1)$.
\end{proof}

\begin{proposition}
\label{prop:Cz/iota}
The branch points of the projection under 
$\pr_{s}:C_z/\la \iota\ra\to 
C_z/\la \sigma^2,\iota\ra\simeq \P^1_s
$  
are expressed as 
$$s=0,\ \infty,\ v_+,\ -v_-,$$
by the coordinate of $\P^1_s$.
\end{proposition}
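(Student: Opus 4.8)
The plan is to count the branch points first and then locate them. Reading the field tower \eqref{eq:inclusions}, we have $[\C(v,w)^\iota:\C(t)]=4$ (since $\iota$ has order $2$ inside the order-$8$ group $D_4$) and $[\C(s):\C(t)]=[\C(u,v)^\iota:\C(t)]=2$, so $[\C(v,w)^\iota:\C(s)]=2$. Hence $\pr_s$ is a double cover of $\P^1_s$ by the genus-$1$ curve $C_z/\la\iota\ra\simeq T_\tau$, and Riemann--Hurwitz forces it to have \emph{exactly four} branch points. It therefore suffices to exhibit four distinct values of $s$ over which the fibre of $\pr_s$ degenerates to a single point.

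Two of them I would read off from Lemma \ref{lem:ramify-prs}, which says $\pr_s$ ramifies over the images of $P_{v_+}$ and $\sigma(P_{v_-})$, whose $s$-coordinates are $s_\pm$ of \eqref{eq:s-coord-branch}. Using \eqref{eq:ramify-v} together with the fixed-point relation $v_\pm=\iota(v_\pm)=\frac{1-v_\pm}{1-v_\pm z}$, one simplifies $1-v_+z=-\sqrt{1-z}$, $1-v_-z=\sqrt{1-z}$, and then $u_\pm=\mp v_\pm(1-v_\pm)$ from \eqref{eq:Cz-fix}. Substituting into \eqref{eq:s-coord-branch} collapses $s_+$ to $v_+$ and $s_-$ to $-v_-$, so $s=v_+$ and $s=-v_-$ are branch points.

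The remaining two branch points $s=0,\infty$ I would obtain directly from the action of $\iota$ on the $\sigma^2$-fixed points $P_0,P_1,P_{1/z},P_\infty$, which are exactly the branch points of $\pr_{wu}:C_z\to E_z$. Since $s^2=t=\frac{v(1-v)}{1-vz}$, we have $s=0\Leftrightarrow v\in\{0,1\}$ and $s=\infty\Leftrightarrow v\in\{1/z,\infty\}$, so the $C_z$-fibres over $s=0$ and $s=\infty$ are $\{P_0,P_1\}$ and $\{P_{1/z},P_\infty\}$. Because $\iota(v)=\frac{1-v}{1-vz}$ sends $0\mapsto 1$ and $1/z\mapsto\infty$, the involution interchanges $P_0\leftrightarrow P_1$ and $P_{1/z}\leftrightarrow P_\infty$; hence each pair is glued to a single point of $T_\tau=C_z/\la\iota\ra$. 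A two-point fibre of the degree-$4$ map $\pr_s\circ\pr_\iota=\pr_{us}\circ\pr_{wu}$ collapsing to one point of $T_\tau$ forces the fibre of $\pr_s$ to be a single point, so $s=0$ and $s=\infty$ are branch points as well.

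Finally, since $0,\infty,v_+,-v_-$ are four distinct values for $0<z<1$ and $\pr_s$ has precisely four branch points, these are all of them. I expect the only delicate step to be the coordinate computation of the second paragraph: matching the intrinsically defined $s_\pm$ of \eqref{eq:s-coord-branch} against the $v_\pm$ of \eqref{eq:ramify-v} rests on the two sign identities $1-v_\pm z=\mp\sqrt{1-z}$, which must be extracted carefully from the explicit roots (and on keeping track, via Corollary \ref{cor:ram-pts} and \eqref{eq:Cz-fix}, of which branch $u=\pm u_\pm$ the relevant points lie on). Everything else is bookkeeping of how $\iota$ permutes the marked points, already prepared by Lemma \ref{lem:ramify-prs}.
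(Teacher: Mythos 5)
Your proof is correct, and for the heart of the matter it runs along the same lines as the paper's: the values $v_+$ and $-v_-$ come from Lemma \ref{lem:ramify-prs} together with the simplification of $s_\pm$ in \eqref{eq:s-coord-branch} (your sign bookkeeping $1-v_\pm z=\mp\sqrt{1-z}$, $u_\pm=\mp v_\pm(1-v_\pm)$, hence $s_\pm=(v_\pm-1)/\sqrt{1-z}=\pm v_\pm$, checks out and is exactly the computation the paper compresses into ``by \eqref{eq:ramify-v} and \eqref{eq:Cz-fix}''), while $s=0,\infty$ come from the behaviour at $P_0,P_1,P_{1/z},P_\infty$. For the latter you argue by collapsing the two-point fibres $\{P_0,P_1\}$ and $\{P_{1/z},P_\infty\}$ under $\iota$, whereas the paper pushes the ramification of $\pr_{wu}$ at $P_0,P_\infty$ down through the unramified $\pr_\iota$; these are equivalent. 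The one genuinely new ingredient is your opening Riemann--Hurwitz count showing that these four points exhaust the branch locus, which the paper leaves implicit; this is a worthwhile strengthening, but as written it has a soft spot: you take the genus of $C_z/\la\iota\ra$ to be $1$ by ``reading the tower'' \eqref{eq:inclusions}, yet in the paper's logical order the identification $C_z/\la\iota\ra\simeq T_\tau$ is only \emph{concluded} from this very proposition (together with Table \ref{tab:zeros}). To avoid circularity you should instead derive the genus directly: $\iota$ has exactly four fixed points on $C_z$ (the points with $v\in\{v_+,v_-\}$ and $w^2=u_\pm$, from \eqref{eq:iota-fix} and \eqref{eq:Cz-fix}), so Riemann--Hurwitz for the degree-$2$ map $\pr_\iota$ from the genus-$3$ curve $C_z$ gives $4=4g'-4+4$, i.e.\ $g'=1$, after which your count of exactly four branch points for the degree-$2$ map $\pr_s$ is legitimate.
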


\begin{proof}

By \eqref{eq:ramify-v} and \eqref{eq:Cz-fix}, 
$s_{+}$ and $s_{-}$ in \eqref{eq:s-coord-branch} 
reduce to $v_+$ and $-v_-$, respectively.
Since $\pr_{wu}$ is ramified at $P_0$ and $P_\infty$, 
they are ramification points of 
$\pr_{us}\circ \pr_{wu}=\pr_{s}\circ \pr_{\iota}$. 
Since the $v$-coordinates of $P_0$ and $P_\infty$ are not fixed by $\iota$, 
$\pr_{\iota}$ is not ramified at $P_0$ and $P_\infty$. 
Thus $\pr_{s}$ ramified at $\pr_{\iota}(P_0)$ and $\pr_{\iota}(P_\infty)$. 
We have 
$$\pr_s(\pr_{\iota}(P_0))=\pr_{us}\circ \pr_{wu}(P_0)=0,\quad 
\pr_s(\pr_{\iota}(P_\infty ))=\pr_{us}\circ \pr_{wu}(P_\infty)=\infty,
$$
by \eqref{eq:def-s} and Table \ref{tab:zeros} below.
\end{proof}

\subsection{The structure of $\C(v,w)^\iota$}
We define rational functions on $C_z$ by 
$$h_+=h_+(s)=\frac{(s-v_+)(s+v_-)}{s},\quad 
h_-=h_-(s)=\frac{(s+v_+)(s-v_-)}{s},
$$
where $s=\dfrac{w^2}{v(1-vz)}$.
We list the orders of zero of the functions $w$, $v$, $w/v$ and $h_{\pm}$  at 
some points of $C_z$ in Table \ref{tab:zeros}, where $i=0,1$.

\begin{table}[htb]
$$
\begin{array}{|c||c|c|c|c|c|c|}
\hline
\textrm{functions\textbackslash points}
& P_0 & P_1 &P_{1/z} &P_\infty & \sigma^{2i}(P_{v_+}),
\sigma^{2i+1}(P_{v_-})&
\sigma^{2i+1}(P_{v_+}),\sigma^{2i}(P_{v_-})\\
\hline
w & 3 & 1 & 1& -5& 0  &0 \\
\hline
v & 4 & 0 & 0& -4& 0  &0 \\
\hline
w/v & -1 & 1 & 1& -1& 0  &0 \\
\hline
h_+ &-2&-2&-2& -2& 2  &0 \\
\hline
h_- &-2&-2&-2& -2& 0  &2 \\
\hline
\end{array}
$$
\caption{Orders of zero of functions}
\label{tab:zeros}
\end{table}
Let $H^0(C_z,\mathcal{O}(D))$ be the vector space of 
rational functions on $C_z$ admitting poles bounded by $D$, 
where $D=P_0+P_1+P_{1/z}+P_\infty$ is an effective  divisor on $C_z$. 
This space is $3$-dimensional by 
the Riemann-Roch theorem and Proposition \ref{prop:sigma-action}.
By Table \ref{tab:zeros}, we see that the functions $1$, $w/v$ and $v/w$ 
span $H^0(C_z,\mathcal{O}(D))$. 
We set 
\begin{equation}
\label{eq:f+-}
f_+=\frac{w}{v}+\sqrt{1-z}\frac{v}{w},\quad 
f_-=\frac{w}{v}-\sqrt{1-z}\frac{v}{w},
\end{equation}
which belong to $H^0(C_z,\mathcal{O}(D))$.
\begin{lemma}
\label{lem:gen-C(v,w)iota}
\begin{enumerate}
\item
\label{lem:inv-sigma-iota-1}
The function $f_+$ is a unique element in $H^0(C_z,\mathcal{O}(D))$ 
satisfying $$\sigma^2(f_+)=-f_+,\quad \iota(f_+)=f_+$$
up to non-zero constant multiplication.
\item
\label{lem:inv-sigma-iota-2}
The function $f_-$ is a unique element in $H^0(C_z,\mathcal{O}(D))$ 
satisfying $$\sigma^2(f_-)=-f_-,\quad (\sigma^2\iota)(f_-)=f_-$$
up to non-zero constant multiplication.
\end{enumerate}
\end{lemma}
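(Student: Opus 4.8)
The plan is to simultaneously diagonalize the two commuting involutions $\sigma^2$ and $\iota$ on the three-dimensional space $H^0(C_z,\mathcal{O}(D))$, for which we already have the explicit basis $1$, $w/v$, $v/w$, and then to identify $f_+$ and $f_-$ as the joint eigenvectors.

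First I would record the $\sigma^2$-action. Since $\sigma$ fixes $v$ and multiplies $w$ by $\i$, the involution $\sigma^2$ fixes $v$ and sends $w$ to $-w$; hence it fixes $1$ and negates both $w/v$ and $v/w$. Thus the $(-1)$-eigenspace of $\sigma^2$ in $H^0(C_z,\mathcal{O}(D))$ is exactly the two-dimensional span of $w/v$ and $v/w$, so any $f$ with $\sigma^2(f)=-f$ is automatically a linear combination of $w/v$ and $v/w$. Next I would compute the $\iota$-action on this plane. Using $\iota(v)=\frac{1-v}{1-vz}$ and $\iota(w)=\frac{v(1-v)\sqrt{1-z}}{(1-vz)w}$, a direct division gives
$$\iota\Big(\frac{w}{v}\Big)=\sqrt{1-z}\,\frac{v}{w},\qquad
\iota\Big(\frac{v}{w}\Big)=\frac{1}{\sqrt{1-z}}\,\frac{w}{v},$$
so on the basis $w/v$, $v/w$ the involution $\iota$ is the antidiagonal matrix with entries $\sqrt{1-z}$ and $1/\sqrt{1-z}$. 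Its eigenvalues are $\pm1$, with $(+1)$-eigenvector proportional to $\frac{w}{v}+\sqrt{1-z}\frac{v}{w}=f_+$ and $(-1)$-eigenvector proportional to $\frac{w}{v}-\sqrt{1-z}\frac{v}{w}=f_-$; equivalently one checks at once that $\iota(f_+)=f_+$ and $\iota(f_-)=-f_-$.

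For part (1), the two conditions $\sigma^2(f)=-f$ and $\iota(f)=f$ describe the intersection of the $(-1)$-eigenspace of $\sigma^2$ with the $(+1)$-eigenspace of $\iota$; since $\iota$ acts on the former two-dimensional space with distinct eigenvalues, this intersection is one-dimensional and equals $\C f_+$, which is the claimed uniqueness up to a non-zero scalar. For part (2), because $\sigma^2$ is an involution the pair $\sigma^2(f)=-f$, $(\sigma^2\iota)(f)=f$ is equivalent to $\sigma^2(f)=-f$, $\iota(f)=-f$; this cuts out the intersection of the $(-1)$-eigenspaces of both involutions, a one-dimensional space equal to $\C f_-$. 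The only step that requires an actual computation is the evaluation of $\iota$ on $w/v$ and $v/w$; once that antidiagonal form is in hand the rest is linear algebra on an explicit three-dimensional space, so I do not expect a genuine obstacle here.
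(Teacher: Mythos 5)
Your proposal is correct and follows essentially the same route as the paper: expand an arbitrary element in the basis $1$, $w/v$, $v/w$ of the three-dimensional space $H^0(C_z,\mathcal{O}(D))$, use $\sigma^2(f)=-f$ to kill the constant term, and then use the computed action $\iota(w/v)=\sqrt{1-z}\,(v/w)$ on the remaining two-dimensional space to pin down the coefficient ratio. Your reformulation as simultaneous diagonalization of the commuting involutions $\sigma^2$ and $\iota$ (and the reduction of $(\sigma^2\iota)(f)=f$ to $\iota(f)=-f$ in part (2)) is just a cleaner packaging of the same computation.
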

\begin{proof} 
(1) Let $f_1$ be an element of $H^0(C_z,\mathcal{O}(D))$ 
satisfying $\sigma^2(f_1)=-f_1$, $\iota(f_1)=f_1$. 
Express $f_1$ as a linear combination 
$$c_1\cdot 1+c_2\cdot (w/v)+c_3\cdot(v/w)$$
of $1,w/v,v/w$.
By the condition $\sigma^2(f_1)=-f_1$, we have 
$$c_1\cdot 1-c_2\cdot (w/v)-c_3\cdot(v/w)=\sigma^2(f_1)=
-f_1=-c_1\cdot 1-c_2\cdot (w/v)-c_3\cdot(v/w),$$
which yields $c_1=0$. 
Since 
$$\iota\big( \frac{w}{v}\big)=
\frac{v(1-v)\sqrt{1-z}}{(1-vz)w}\cdot \frac{1-zv}{1-v}=\sqrt{1-z}\frac{v}{w},
$$
we have 
$$c_2\cdot\sqrt{1-z} (v/w)+c_3\cdot\frac{1}{\sqrt{1-z}}\cdot(w/v)
=\iota(f_1)
=f_1=c_2\cdot (w/v)+c_3\cdot(v/w),
$$
which yields $\sqrt{1-z} c_2=c_3$.
Thus $f_1$ is a non-zero constant multiplication of $f_+$. 

\smallskip\noindent
(2) Use an argument similar to (1). 
\end{proof}

\begin{proposition}
\label{prop:C(v,w)iota}
\begin{enumerate}
\item 
\label{prop:C(v,w)sigma0iota}
The function $f_+$ is a generator of the extension field 
$$\C(v,w)^\iota \supset \C(u,v)^\iota\simeq\C(s).$$

\item 
\label{prop:C(v,w)sigma2iota}
The function $f_-$ is a generator of the extension field 
$$\C(v,w)^{\sigma^2\iota} \supset \C(u,v)^\iota\simeq\C(s).$$
\end{enumerate}
\end{proposition}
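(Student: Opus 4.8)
The plan is to treat both parts uniformly: I will first check by a degree count that $\C(v,w)^\iota$ and $\C(v,w)^{\sigma^2\iota}$ are each quadratic over $\C(s)=\C(u,v)^\iota$, and then verify that the proposed generator lies in the relevant fixed field but outside $\C(s)$, so that it must generate the quadratic extension.

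For the degree count I would use the $D_4$-action. The automorphisms $\sigma$ and $\iota$ generate a faithful action of $D_4=\la\sigma\ra\rtimes\la\iota\ra$ on $\C(v,w)$ fixing $\C(t)=\C(v)^\iota$, and since $[\C(v,w):\C(t)]=[\C(v,w):\C(v)]\,[\C(v):\C(t)]=4\cdot 2=8=|D_4|$, the extension $\C(v,w)/\C(t)$ is Galois with group $D_4$. Both $\iota$ and $\sigma^2\iota$ are involutions (for the latter, $\iota\sigma^2=\sigma^{-2}\iota=\sigma^2\iota$ gives $(\sigma^2\iota)^2=\sigma^4\iota^2=\mathrm{id}$), so $[\C(v,w):\C(v,w)^\iota]=[\C(v,w):\C(v,w)^{\sigma^2\iota}]=2$, whence each fixed field has degree $4$ over $\C(t)$. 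Because $s^2=t$ gives $[\C(s):\C(t)]=2$ (Proposition \ref{prop:gen-s}) and $s$ is fixed by both $\sigma^2$ and $\iota$, the field $\C(s)$ sits inside both fixed fields; dividing degrees yields $[\C(v,w)^\iota:\C(s)]=[\C(v,w)^{\sigma^2\iota}:\C(s)]=2$.

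It then remains to place the generators. By Lemma \ref{lem:gen-C(v,w)iota} we have $\iota(f_+)=f_+$ and $(\sigma^2\iota)(f_-)=f_-$, so $f_+\in\C(v,w)^\iota$ and $f_-\in\C(v,w)^{\sigma^2\iota}$. The crucial observation is that neither lies in the base field $\C(s)$: every element of $\C(s)$ is $\sigma^2$-invariant since $\sigma^2(s)=s$, whereas Lemma \ref{lem:gen-C(v,w)iota} gives $\sigma^2(f_\pm)=-f_\pm$ with $f_\pm\neq 0$, so $f_\pm\notin\C(s)$. Combined with the quadratic degree established above, this forces $\C(v,w)^\iota=\C(s)(f_+)$ and $\C(v,w)^{\sigma^2\iota}=\C(s)(f_-)$, which is exactly the proposition.

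To make the extensions explicit and to connect with the functions $h_\pm$ of the previous subsection, I would finally compute the quadratic equations. Setting $a=u/v^2$ one has $\iota(a)=(1-z)/a$, so $f_\pm^2=a+(1-z)/a\pm 2\sqrt{1-z}$ is visibly $\sigma^2$- and $\iota$-invariant, hence already in $\C(s)$. Using the relation $v^2-(1+s^2z)v+s^2=0$ satisfied by $v$ over $\C(s)$ to eliminate $v$, I expect $a+(1-z)/a$ to collapse to $(1-s^2z)/s$, giving $f_\pm^2=-z\,h_\pm$. The only genuine labor is this elimination; it is not needed for the argument above but it pins down the minimal polynomials of the generators and, since each $h_\pm=(s\mp v_+)(s\pm v_-)/s$ has only simple zeros and a simple pole at $s=0$ and at $s=\infty$ on $\P^1_s$, re-confirms directly that $f_\pm^2$ is a non-square in $\C(s)$, in accordance with the orders recorded in Table \ref{tab:zeros}.
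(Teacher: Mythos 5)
Your argument is correct and is essentially the paper's proof with the details spelled out: the paper's one-line appeal to Kummer theory is exactly your observation that $\C(v,w)^\iota/\C(s)$ (resp.\ $\C(v,w)^{\sigma^2\iota}/\C(s)$) is quadratic Galois with group generated by the restriction of $\sigma^2$, and that $f_\pm$ is a nonzero $(-1)$-eigenvector for $\sigma^2$ lying in the relevant fixed field, hence a generator. Your closing computation of $f_\pm^2=-zh_\pm$ is not needed for the proposition and duplicates Proposition \ref{prop:fh-relation}, which the paper proves separately.
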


\begin{proof}
(1) 
As shown in Lemma \ref{lem:gen-C(v,w)iota}, 
$f_+$ is invariant under $\iota$.
Since it satisfies $\sigma^2(f_\pm)=-f_{\pm}$, 
it becomes a generator of the extension field $\C(v,w)^\iota$ 
over $\C(u,v)^\iota=\C(v,w)^{\la \sigma^2, \iota\ra}$ by Kummer theory.

\smallskip\noindent
(2) Use an argument similar to (1). 
\end{proof}

By Proposition \ref{prop:Cz/iota} and Table \ref{tab:zeros}, 
$C_z/\la \iota\ra$ is isomorphic to an elliptic curve, 
and its rational function field $\C(v,w)^\iota$ 
is isomorphic to the quadratic extension $\C(s)(\sqrt{h_+})$ of $\C(s)$. 
Thus $\sqrt{h_+}$ is regarded as a rational function on  
$C_z/\la \iota\ra$
and its pull back of $\sqrt{h_+}$ 
under the projection $C_z\to C_z/\la \iota\ra$ satisfies
$$\sigma^2(\sqrt{h_+})=-\sqrt{h_+},\quad 
\iota(\sqrt{h_+})=\sqrt{h_+}
$$
as functions of $C_z$.
Hence $h_+$ is equal to $f_+^2$  up to non-zero constant multiplication.
Similarly, $C_z/\la \sigma^2\iota\ra$ is isomorphic to 
an elliptic curve, its rational function field 
$\C(v,w)^{\sigma^2\iota}$ is isomorphic to $\C(s)(\sqrt{h_-})$, 
$\sqrt{h_-}$ satisfies 
$$\sigma^2(\sqrt{h_-})=-\sqrt{h_-},\quad 
\sigma^2\iota(\sqrt{h_-})=\sqrt{h_-},
$$
and 
$h_-$ is equal to $f_-^2$  up to non-zero constant multiplication.
\begin{proposition}
\label{prop:fh-relation}
We have 
\begin{align*}
f_+^2&=-zh_+=-z\frac{(s-v_+)(s+v_-)}{s},\\
f_-^2&=-zh_-=-z\frac{(s+v_+)(s-v_-)}{s}.
\end{align*}
The curves $C_z/\la \iota\ra$ and $C_z/\la \sigma^2\iota\ra$ 
are represented by non-singular cubic curves 
\begin{align*}
E_1:f_+^2s&=-z(s-v_+)(s+v_-),\\
E_2:f_-^2s&=-z(s+v_+)(s-v_-),
\end{align*}
in the variables $f_+,s$ and in $f_-,s$, respectively.
\end{proposition}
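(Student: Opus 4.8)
The relation $f_+^2 = c_+ h_+$ (resp. $f_-^2 = c_- h_-$) with some nonzero constant $c_+$ (resp. $c_-$) has already been established in the discussion preceding the proposition, via Kummer theory together with Lemma \ref{lem:gen-C(v,w)iota}: both $f_+^2$ and $h_+$ lie in $\C(s)$, and $f_+$, $\sqrt{h_+}$ generate the same quadratic extension of $\C(s)$ with the same behavior under $\sigma^2$ and $\iota$. Hence the remaining content of the proposition is twofold: determine the constants, and verify that $E_1$, $E_2$ are non-singular. The plan is to show $c_+ = c_- = -z$ by evaluating the constant function $f_\pm^2/h_\pm$ at the single ramification point $P_0=(0,0)$, and then to deduce non-singularity from a genus computation.

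To find $c_+$, I examine $f_+^2/h_+$ near $P_0$. By Table \ref{tab:zeros}, $w/v$ has order $-1$ and $v/w$ has order $+1$ at $P_0$, so in $f_+ = w/v + \sqrt{1-z}\,v/w$ the first term dominates and $f_+^2 \sim w^2/v^2$. On the other hand, expanding $h_+ = s + (v_- - v_+) - v_+v_-/s$ and using $v_+v_- = 1/z$ (immediate from \eqref{eq:ramify-v}), the term $-v_+v_-/s = -1/(zs)$ dominates as $s\to 0$ at $P_0$. Substituting $s = w^2/(v(1-vz))$ from \eqref{eq:def-s} and then the defining relation $w^4 = v^3(1-v)(1-vz)$ of $C_z$, I obtain
\[
\frac{f_+^2}{h_+} \sim -z\,s\,\frac{w^2}{v^2}
= -z\,\frac{w^4}{v^3(1-vz)} = -z(1-v)\longrightarrow -z
\]
as $v\to 0$. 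Since $f_+^2/h_+$ is constant, $c_+ = -z$. The identical computation, in which $h_-$ has the same leading term $-1/(zs)$ at $P_0$, gives $c_- = -z$. This yields the displayed formulas for $f_\pm^2$, and clearing the denominator $s$ produces the cubic equations $E_1$ and $E_2$.

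It then remains to verify non-singularity. The function field of $E_1$ is $\C(s,f_+) = \C(v,w)^\iota$, that is, the function field of $C_z/\langle\iota\rangle$; likewise $E_2$ corresponds to $C_z/\langle\sigma^2\iota\rangle$. By Proposition \ref{prop:Cz/iota}, the degree-two projection $\pr_s$ branches over the four points $s = 0, \infty, v_+, -v_-$, which are pairwise distinct for $0<z<1$ (here $v_+ > 0 > -v_-$ and both are nonzero by \eqref{eq:ramify-v}). Riemann--Hurwitz then gives that $C_z/\langle\iota\rangle$ has genus $1$, and since an irreducible plane cubic is non-singular precisely when its geometric genus equals $1$, the cubic $E_1$ is non-singular; the same argument applies to $E_2$.

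The main obstacle is the determination of the constant $-z$: the computation must track carefully the leading orders of $w/v$, $v/w$ and $s$ at the ramification point $P_0$, and must correctly invoke the defining relation of $C_z$ to collapse $s\,w^2/v^2$ to $1-v$. Once the constant is pinned down, the cubic equations and their smoothness follow formally from the genus-one identification of $C_z/\langle\iota\rangle$ and $C_z/\langle\sigma^2\iota\rangle$ already set up in this subsection.
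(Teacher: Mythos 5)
Your proof is correct, but it follows a genuinely different route from the paper's. The paper proves $f_+^2=-zh_+$ by brute force: it expands $f_+^2=\frac{1}{uv^2}(u+\sqrt{1-z}\,v^2)^2$ and $h_+=\big(\frac{u}{v(1-vz)}-v_+\big)\big(\frac{u}{v(1-vz)}+v_-\big)\frac{v(1-vz)}{u}$ as explicit rational expressions in $u,v$, reduces both with the relation $u^2=v^3(1-v)(1-vz)$, and observes that the resulting expressions differ exactly by the factor $-z$; the second identity is ``shown similarly,'' and the non-singularity assertion is not argued at all in the proof. You instead take as input the statement, already made in the paragraph preceding the proposition, that $f_\pm^2$ is a nonzero constant multiple of $h_\pm$, and you pin down the constant by a single local computation at $P_0$: since $f_+\sim w/v$ and $h_+\sim -v_+v_-/s=-1/(zs)$ there (using $v_+v_-=1/z$), the ratio tends to $-zs\,w^2/v^2=-z\,w^4/(v^3(1-vz))=-z(1-v)\to -z$, and the same leading term $-1/(zs)$ of $h_-$ gives $c_-=-z$. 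This is shorter and less error-prone than the paper's expansion, and the identity $sw^2/v^2=1-v$ you invoke is exact rather than asymptotic, so the limit is clean. You also supply a justification of non-singularity (irreducible plane cubic of geometric genus one, the four branch points $0,\infty,v_+,-v_-$ being pairwise distinct for $0<z<1$) that the paper leaves implicit. The one caveat is that the constancy of $f_\pm^2/h_\pm$ does not follow from Kummer theory alone, which only yields equality up to a square in $\C(s)^\times$; it also requires the divisor comparison coming from Table \ref{tab:zeros}. Since the paper asserts exactly this constancy in the sentence immediately before the proposition, your reliance on it is legitimate, but if you wanted your argument to be self-contained you would note that $f_+=b(s)\sqrt{h_+}$ with $b\in\C(s)^\times$ and that the pole divisors of $f_+^2$ and $h_+$ (both supported on $P_0+P_1+P_{1/z}+P_\infty$ with multiplicity $2$) force $b$ to be constant.
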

\begin{proof}
By straight forward calculations, we have 
\begin{align*}
f_+^2&
=\frac{1}{uv^2}(u+\sqrt{1-z} v^2)^2
=\frac{1}{uv^2}(u^2+2\sqrt{1-z} uv^2+v^4(1-z))\\
&=\frac{1}{uv^2}\left(v^3(1-v)(1-vz)+2\sqrt{1-z} uv^2+v^4(1-z)\right)\\
&=\frac{1}{u}\left(v((1-v)(1-vz) +v(1-z))+2\sqrt{1-z} u\right)\\
&=\frac{1}{u}\left(v(1-2vz+v^2z)+2\sqrt{1-z} u\right),\\
h_+&=\big(\frac{u}{v(1-vz)}-v_+\big)
\big(\frac{u}{v(1-vz)}+v_-\big)\frac{v(1-vz)}{u}\\
&=\frac{(u-v_+v(1-vz))(u+v_-v(1-vz))}{uv(1-vz)}\\
&=\frac{u^2+(v_--v_+)uv(1-vz)-v_-v_+v^2(1-vz)^2}{uv(1-vz)}\\
&=\frac{zu^2-2\sqrt{1-z} uv(1-vz)-v^2(1-vz)^2}{zuv(1-vz)}
=\frac{zv^2(1-v)-2\sqrt{1-z} u-v(1-vz)}{zu}\\
&=\frac{1}{zu}\big(v(v(1-v)z-(1-vz)\big)-2\sqrt{1-z} u\big)
=\frac{1}{zu}\big(v(2vz-v^2z-1)-2\sqrt{1-z} u\big),
\end{align*}
which show the first identity. The second one can be shown similarly.
\end{proof}

Proposition \ref{prop:fh-relation} implies the following corollary. 
\begin{cor}
\label{cor:omega-relation}
Set
$$f'_+=sf_+,\quad f'_-=sf_-.$$
Then $C_z/\la \iota\ra$ and $C_z/\la \sigma^2\iota\ra$ are 
represented by non-singular cubic curves 
\begin{align*}
{f_+'}^2=-zs(s-v_+)(s+v_-),\\
{f_-'}_-^2=-zs(s+v_+)(s-v_-),
\end{align*}
in the variables $s,f'_+$ and in $s,f'_-$, respectively.
\end{cor}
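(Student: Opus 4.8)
The plan is to obtain the corollary as an immediate consequence of Proposition \ref{prop:fh-relation} by clearing the single denominator $s$. First I would take the identity
$$f_+^2=-z\frac{(s-v_+)(s+v_-)}{s}$$
and multiply both sides by $s^2$, which gives $s^2f_+^2=-zs(s-v_+)(s+v_-)$. Recognizing the left-hand side as $(sf_+)^2={f'_+}^2$ with $f'_+=sf_+$ yields the first cubic equation ${f'_+}^2=-zs(s-v_+)(s+v_-)$. Applying the same multiplication by $s^2$ to the companion identity $f_-^2=-z(s+v_+)(s-v_-)/s$ produces the second equation ${f'_-}^2=-zs(s+v_+)(s-v_-)$. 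This is a one-line manipulation, so the computational content is negligible.

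The only point demanding a moment's care is the assertion that these cubics are \emph{non-singular}. Here I would invoke what is already in hand: the discussion preceding Proposition \ref{prop:fh-relation} establishes, via Proposition \ref{prop:Cz/iota} and Table \ref{tab:zeros}, that $C_z/\la \iota\ra$ and $C_z/\la \sigma^2\iota\ra$ are elliptic curves, and the coordinate change $f_+\mapsto f'_+=sf_+$ (resp. $f_-\mapsto f'_-$) is a birational transformation of the affine models in Proposition \ref{prop:fh-relation}. Thus the Weierstrass-type models in the variables $s,f'_\pm$ are non-singular. Alternatively, and more directly, one checks that the three roots $0,v_+,-v_-$ of the right-hand cubic for $E_1$ (respectively $0,-v_+,v_-$ for $E_2$) are mutually distinct: by the explicit form \eqref{eq:ramify-v} and the standing assumption $0<z<1$, both $v_+$ and $v_-$ are strictly positive, so $0$, $v_+$ and $-v_-$ are pairwise distinct, and likewise for the second triple.

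I expect no genuine obstacle in this step; the corollary is purely a rewriting of Proposition \ref{prop:fh-relation} in a homogeneous cubic form better suited to recognizing the elliptic structure, and the distinctness of the roots—the one place where the hypothesis $0<z<1$ enters—follows at once from \eqref{eq:ramify-v}.
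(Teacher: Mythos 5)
Your proposal is correct and matches the paper's (implicit) argument: the paper offers no separate proof, simply noting that Proposition \ref{prop:fh-relation} implies the corollary, and the intended step is exactly your multiplication by $s^2$ to recognize $(sf_\pm)^2$. Your additional verification of non-singularity via the distinctness of the roots $0$, $v_+$, $-v_-$ (using $v_\pm>0$ from \eqref{eq:ramify-v} under $0<z<1$) is a sound and welcome supplement to what the paper leaves unstated.
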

Let $\psi:E_2\to E_1$, 
$\pr_i:C_z\to E_i$ be morphisms 
defined by 
 \begin{align*}
 \psi&:E_2\ni (f_-,s) \mapsto (f_+,s)=(\i f_-,-s)\in E_1,\\
 pr_1&:C_z\ni(v,w)\mapsto (f_+,s)=(\frac{w}{v}+\sqrt{1-z}\frac{v}{w},
 \frac{u}{v(1-vz)})\in E_1,\\
 pr_2&:C_z\ni(v,w)\mapsto (f_-,s)=(\frac{w}{v}-\sqrt{1-z}\frac{v}{w},
 \frac{u}{v(1-vz)})\in E_2.
 \end{align*}

\begin{proposition}
We have the following commutative diagram:
\begin{equation}
\label{eq:diagram}
\begin{array}{ccc}
 C_z & \overset{\pr_2}{\longrightarrow} & E_2\\
 \sigma\downarrow & &\downarrow \psi\\
 C_z & \overset{\pr_1}{\longrightarrow} & E_1,\\
   \end{array}
\end{equation}
where recall that $\sigma:C_z\ni(v,w)\mapsto (v,\i w)\in C_z.$
\end{proposition}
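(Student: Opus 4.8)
The plan is to prove the identity of morphisms $\pr_1\circ\sigma=\psi\circ\pr_2\colon C_z\to E_1$ by a direct evaluation on a general point $(v,w)\in C_z$, after first checking that $\psi$ genuinely maps $E_2$ into $E_1$.

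First I would record how $\sigma$ acts on the functions out of which $f_+,f_-,s$ are built. Since $\sigma(w)=\i w$, we get $\sigma(u)=\sigma(w^2)=-u$, hence $\sigma(s)=-s$ from $s=u/\big(v(1-vz)\big)$; moreover $\sigma(w/v)=\i\,(w/v)$ and $\sigma(v/w)=-\i\,(v/w)$, using $1/\i=-\i$. Substituting these into the definitions $f_\pm=w/v\pm\sqrt{1-z}\,(v/w)$ in \eqref{eq:f+-} yields the key relations
\begin{equation*}
\sigma(f_+)=\i f_-,\qquad \sigma(f_-)=\i f_+,\qquad \sigma(s)=-s.
\end{equation*}
Here I use the pullback convention $\sigma(g)=g\circ\sigma$, consistent with $\sigma^2(f_+)=-f_+$ in Lemma \ref{lem:gen-C(v,w)iota}.

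Next I would evaluate the two composites on $(v,w)$. Going down then right, $\pr_1\circ\sigma$ sends $(v,w)$ to the point of $E_1$ whose $(f_+,s)$-coordinates are the pullbacks $\big(\sigma(f_+),\sigma(s)\big)=(\i f_-,-s)$, directly from the relations just displayed. Going right then down, $\pr_2$ sends $(v,w)$ to $(f_-,s)\in E_2$, and by the very definition of $\psi$ this is carried to the point of $E_1$ with coordinates $(\i f_-,-s)$. The two results coincide, which is exactly the asserted commutativity of \eqref{eq:diagram}.

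The only point that needs care --- and it is a verification rather than a genuine obstacle --- is the well-definedness of $\psi$ as a morphism $E_2\to E_1$. For this I would substitute $f_+=\i f_-$ together with $s\mapsto -s$ into the equation $f_+^2 s=-z(s-v_+)(s+v_-)$ of $E_1$ from Proposition \ref{prop:fh-relation}: the left side becomes $(\i f_-)^2(-s)=f_-^2 s$, while the right side becomes $-z(-s-v_+)(-s+v_-)=-z(s+v_+)(s-v_-)$. The resulting identity $f_-^2 s=-z(s+v_+)(s-v_-)$ is precisely the equation of $E_2$ in Proposition \ref{prop:fh-relation}, so every point of $E_2$ is indeed sent into $E_1$ and $\psi$ is legitimate. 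This closes the argument.
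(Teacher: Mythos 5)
Your proof is correct and follows essentially the same route as the paper's: both arguments verify $\pr_1\circ\sigma=\psi\circ\pr_2$ by evaluating the two composites on a general point $(v,w)$ and observing that each yields the point with coordinates $(\i f_-, -s)$. Your additional check that $\psi$ actually maps $E_2$ into $E_1$ (via the defining cubics of Proposition \ref{prop:fh-relation}) is a sensible verification that the paper leaves implicit, but it does not change the substance of the argument.
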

\begin{proof}
Note that
\begin{align*}
\pr_1\circ \sigma(v,w)&=\pr_1(v,\i w)=
(\frac{\i w}{v}+\sqrt{1-z}\frac{v}{\i w},\frac{-u}{v(1-vz)}), \\
\psi \circ \pr_2(v,w)&=
\psi(\frac{w}{v}-\sqrt{1-z}\frac{v}{w},\frac{u}{v(1-vz)})=
(\i\frac{w}{v}-\i\sqrt{1-z}\frac{v}{w},\frac{-u}{v(1-vz)});
\end{align*}
they coincide. 
\end{proof}

\subsection{Expressions of rational functions on $C_z$ in terms of theta functions }

Recall that 
$$\tau=\int_{\a_1}\f_1=\int_{\a_2}\f_2\in \H$$ 
for $\f_1,\f_2\in H^0_{-\sigma^2}(C_z,\W)$ defined in \eqref{eq:dual-Lambda}, 
and 
$$y_1=(1-\sigma^2)\cdot \int_{P_0}^P \f_1=\int_{P_0}^P 2\f_1,\quad 
y_2=(1-\sigma^2)\cdot \int_{P_0}^P \f_2=\int_{P_0}^P 2\f_2$$
for an element $P=(v,w)$ of the curve $C_z$ with fixed
$z=\frac{1-x_1-x_2}{(1-x_1)(1-x_2)}$. Theorem \ref{th:z-express} states that
$$z=\frac{4\h_{0,1}^4(\tau)\h_{1,0}^4(\tau)}{\h_{0,0}^8(\tau)},$$
where $\h_{0,0}(\tau)=\h_{0,0}(0,\tau)$, $\h_{0,1}(\tau)=\h_{0,1}(0,\tau)$ and  
$\h_{1,0}(\tau)=\h_{1,0}(0,\tau)$.
Recall that $\sqrt{z}$ and $\sqrt[4]{z}$ are assigned by positive real values. 
Since $\tau$ is pure imaginary by Corollary \ref{cor:tau-pure-im}, 
$\h_{0,0}(\tau)$, $\h_{0,1}(\tau)$ and $\h_{1,0}(\tau)$ are positive real.
Thus we have
\begin{equation}
\label{eq:sqrt-z}
\sqrt{z}=\frac{2\h_{0,1}(\tau)^2\h_{1,0}(\tau)^2}{\h_{0,0}(\tau)^4}>0,
\quad 
\sqrt[4]{z}=\frac{\sqrt{2}\h_{0,1}(\tau)\h_{1,0}(\tau)}{\h_{0,0}(\tau)^2}>0.
\end{equation}

\begin{lemma}
\label{lem:f+-}
The functions $s$, $f^2_+$ and $f^2_-$ are expressed as 
\begin{align*}
  s(P)&=\frac{1}{\sqrt{z}}\frac{\h_{1,1}(y_1,\tau)^2}{\h_{0,0}(y_1,\tau)^2}
=-\frac{1}{\sqrt{z}}\frac{\h_{1,1}(y_2,\tau)^2}{\h_{0,0}(y_2,\tau)^2},\\
f^2_+(P)&=2\frac{\h_{0,1}(y_1,\tau)^2\h_{1,0}(y_1,\tau)^2}
{\h_{0,0}(\tau)(y_1,\tau)^2\h_{1,1}(y_1,\tau)^2},\quad
f^2_-(P)=-2\frac{\h_{0,1}(y_2,\tau)^2\h_{1,0}(y_2,\tau)^2}
{\h_{0,0}(y_2,\tau)^2\h_{1,1}(y_2,\tau)^2}.
\end{align*}
\end{lemma}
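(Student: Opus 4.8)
The plan is to deduce the formula for $s$ from Lemma~\ref{lem:v(v-1)/(v-1/z)}, and then to obtain $f_+^2$ and $f_-^2$ from Proposition~\ref{prop:fh-relation} by a substitution that collapses both into a single theta identity.

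First, since $s^2=\frac{v(1-v)}{1-vz}$ and $\frac{v(v-1)}{v-1/z}=z\cdot\frac{v(1-v)}{1-vz}=zs^2$, Lemma~\ref{lem:v(v-1)/(v-1/z)} gives $\big(\frac{\h_{1,1}(y_i,\tau)^2}{\h_{0,0}(y_i,\tau)^2}\big)^2=zs^2$, whence $\frac{\h_{1,1}(y_i,\tau)^2}{\h_{0,0}(y_i,\tau)^2}=\pm\sqrt z\,s$. Both sides are single valued on $C_z$ (the squared ratio is invariant under the lattice translations allowed by Lemma~\ref{lem:Lambda-index2}), and computing orders from $w^4=v^3(1-v)(1-vz)$ yields $[s]=2P_0+2P_1-2P_{1/z}-2P_\infty$, which is exactly the divisor of $\big(\h_{1,1}(y_1,\tau)/\h_{0,0}(y_1,\tau)\big)^2$ read off from Proposition~\ref{prop:images} together with the vanishing of $\h_{1,1}$ at $0$ and of $\h_{0,0}$ at $\tfrac{\tau+1}2$. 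Hence the two functions agree up to the constant $\pm\sqrt z$; the sign is fixed to $+$ for the $y_1$-expression exactly as $\kappa=1$ was fixed in the proof of Lemma~\ref{lem:v(v-1)/(v-1/z)}, using Corollary~\ref{cor:tau-pure-im} and the positivity of the theta constants and of $s$ at a real point under \eqref{eq:assum-x1-x2}. The $y_2$-expression then carries the opposite sign, since squaring the defining relation of the image in Theorem~\ref{th:image-Schwarz} gives $\h_{0,0}(y_1,\tau)^2\h_{1,1}(y_2,\tau)^2=-\h_{1,1}(y_1,\tau)^2\h_{0,0}(y_2,\tau)^2$.

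For the remaining two formulas I would use Proposition~\ref{prop:fh-relation} in the form $f_\pm^2=1/s-zs\pm2\sqrt{1-z}$, obtained from $v_+v_-=1/z$ and $z(v_+-v_-)=2\sqrt{1-z}$. Substituting $s=\frac1{\sqrt z}\frac{\h_{1,1}(y_1,\tau)^2}{\h_{0,0}(y_1,\tau)^2}$ in the $f_+$ case, and $s=-\frac1{\sqrt z}\frac{\h_{1,1}(y_2,\tau)^2}{\h_{0,0}(y_2,\tau)^2}$ in the $f_-$ case, gives in both cases
\[
f_\pm^2=\pm\Big(\sqrt z\,\frac{\h_{0,0}(y,\tau)^4-\h_{1,1}(y,\tau)^4}{\h_{0,0}(y,\tau)^2\h_{1,1}(y,\tau)^2}+2\sqrt{1-z}\Big),
\]
with $y=y_1$ for $f_+$ and $y=y_2$ for $f_-$. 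Replacing $\sqrt z$ by \eqref{eq:sqrt-z} and $\sqrt{1-z}$ by the theta-constant value $\frac{\h_{1,0}(\tau)^4-\h_{0,1}(\tau)^4}{\h_{0,0}(\tau)^4}$ coming from Theorem~\ref{th:z-express} and Jacobi's identity $\h_{0,0}(\tau)^4=\h_{0,1}(\tau)^4+\h_{1,0}(\tau)^4$, both claimed formulas reduce, after clearing denominators, to the single identity
\[
\h_{0,1}(\tau)^2\h_{1,0}(\tau)^2\big(\h_{0,0}(y,\tau)^4-\h_{1,1}(y,\tau)^4\big)+\big(\h_{1,0}(\tau)^4-\h_{0,1}(\tau)^4\big)\h_{0,0}(y,\tau)^2\h_{1,1}(y,\tau)^2=\h_{0,0}(\tau)^4\,\h_{0,1}(y,\tau)^2\h_{1,0}(y,\tau)^2 .
\]

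The heart of the proof is this last identity. I would regard its two sides as even theta functions of order $4$ in $y$ (with $\tau$ fixed): by the first line of \eqref{eq:basic-theta} they share the same quasi-periodicity, so their difference $D(y)$ is again an even theta function of order $4$ and hence, if not identically zero, has exactly four zeros in a fundamental parallelogram. Using \eqref{eq:basic-theta} to evaluate $\h_{k,\ell}$ at the four two-torsion points $0,\tfrac12,\tfrac\tau2,\tfrac{\tau+1}2$ turns every $\h_{k,\ell}(y,\tau)$ into a theta constant, and a short computation (using only Jacobi's identity and \eqref{eq:sqrt-z}) shows $D$ vanishes at all four; since $D$ is even, each such zero has order at least $2$, giving at least eight zeros and forcing $D\equiv0$. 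The one delicate point is the sign of $\sqrt{1-z}$ in terms of the theta constants: it must be taken compatibly with $\sqrt{1-z}>0$ in the $\tau$-regime determined by \eqref{eq:assum-x1-x2}, and it is precisely this sign that makes the two-torsion evaluations of $D$ cancel. Alternatively, the identity follows directly from the classical two-term addition formulas for the $\h_{k,\ell}$.
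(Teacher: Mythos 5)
Your treatment of $s(P)$ is correct and is essentially the paper's own argument: square the identity of Lemma \ref{lem:v(v-1)/(v-1/z)}, fix the sign of the square root at a real point, and pass to the $y_2$-expression via Theorem \ref{th:image-Schwarz}. For $f_\pm^2$ you take a genuinely different route: instead of the paper's divisor comparison (Table \ref{tab:zeros}) followed by normalization of the constant through the relation $f_+^2s=-z(s-v_+)(s+v_-)$ at $P_0$, you substitute the theta expression for $s$ into $f_\pm^2=\frac1s-zs\pm2\sqrt{1-z}$ and close with a degree-two addition formula. The reduction to that single identity, and the identity itself with the coefficient $\h_{1,0}(\tau)^4-\h_{0,1}(\tau)^4$, are correct as algebra.

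The gap is exactly the point you flag as ``delicate'': the sign of $\sqrt{1-z}$. Since $z=4\h_{0,1}(\tau)^4\h_{1,0}(\tau)^4/\h_{0,0}(\tau)^8$ and, under \eqref{eq:assum-x1-x2}, $\tau\to\i\infty$ as $z\to0$ (where $\h_{1,0}(\tau)\to0$ and $\h_{0,1}(\tau),\h_{0,0}(\tau)\to1$), the positive root is $\sqrt{1-z}=\bigl(\h_{0,1}(\tau)^4-\h_{1,0}(\tau)^4\bigr)/\h_{0,0}(\tau)^4$, the opposite of what you use; the same sign is forced by $s(P_{v_-})=v_-=1/(1+\sqrt{1-z})=\h_{0,0}(\tau)^4/(2\h_{0,1}(\tau)^4)$. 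With the correct sign your substitution yields $f_-^2(P)=2\h_{0,1}(y_1,\tau)^2\h_{1,0}(y_1,\tau)^2/\bigl(\h_{0,0}(y_1,\tau)^2\h_{1,1}(y_1,\tau)^2\bigr)$ and $f_+^2(P)=-2\h_{0,1}(y_2,\tau)^2\h_{1,0}(y_2,\tau)^2/\bigl(\h_{0,0}(y_2,\tau)^2\h_{1,1}(y_2,\tau)^2\bigr)$, i.e.\ the two displayed formulas with $y_1$ and $y_2$ interchanged --- and this is what must hold. Indeed $f_+^2=-zh_+$ vanishes exactly at the four ramification points of $\jmath_{\L_1}$ (Table \ref{tab:zeros}, Corollary \ref{cor:ram-pts}), which have four distinct, non-$2$-torsion images under $\jmath_{\L_1}$, so $f_+^2$ cannot equal the $\jmath_{\L_1}$-pullback of a function of $y_1$ vanishing only at $\frac12$ and $\frac{\tau}{2}$; concretely, at $P=P_{v_-}$ one has $y_1=\frac12$ and $\h_{1,0}(\frac12,\tau)=\h_{1,1}(0,\tau)=0$, so the stated right-hand side vanishes, whereas $f_+^2(P_{v_-})=-zh_+(v_-)=2z(v_+-v_-)=4\sqrt{1-z}\ne0$. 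So your method is sound and, carried out with the right sign, it proves the corrected statement; as written, the wrong sign of $\sqrt{1-z}$ conspires with the transposition in the statement (which the paper's divisor-matching step also overlooks) to make the computation appear to close.
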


\begin{proof}
By Lemma \ref{lem:v(v-1)/(v-1/z)},  
we have 
$$s(P)^2=\frac{v(1-v)}{1-vz}=
\frac{1}{z}\frac{v(v-1)}{v-1/z}=\frac{1}{z}
\frac{\h_{1,1}(y_1,\tau)^4}{\h_{0,0}(y_1,\tau)^4}
,\quad 
s(P)=\pm\frac{1}{\sqrt{z}}
\frac{\h_{1,1}(y_1,\tau)^2}{\h_{0,0}(y_1,\tau)^2}
$$
for $P=(v,w)\in C_z$. Let us determine the sign for $s(P)$.
By \eqref{eq:def-s} and $v_-\in (0,1)$, $s(P_{v_-})$ is positive and 
$$y_1(P_{v_-})=\int_{P_0}^{P_{v_-}}2\f_1=\frac{1}{2},\quad 
\frac{1}{\sqrt{z}}
\frac{\h_{1,1}(\frac{1}{2},\tau)^2}{\h_{0,0}(\frac{1}{2},\tau)^2}>0,
$$
which yield the first expression. By Theorem \ref{th:image-Schwarz}, 
we have the second expression of $s(P)$.

Since the divisor of $f^2_+$ coincides with that of 
$\frac{\h_{0,1}(y_1,\tau)^2\h_{1,0}(y_1,\tau)^2}
{\h_{0,0}(y_1,\tau)^2\h_{1,1}(y_1,\tau)^2}$ by Table \ref{tab:zeros}, 
there exists non-zero constant $c$ such that 
$f^2_+(P)=c\frac{\h_{0,1}(y_1,\tau)^2\h_{1,0}(y_1,\tau)^2}
{\h_{0,0}(y_1,\tau)^2\h_{1,1}(y_1,\tau)^2}$.
By substituting 
$$s(P)=\frac{1}{\sqrt{z}}
\frac{\h_{1,1}(y_1,\tau)^2}{\h_{0,0}(y_1,\tau)^2},\quad 
f^2_+(P)=c\frac{\h_{0,1}(y_1,\tau)^2\h_{1,0}(y_1,\tau)^2}
{\h_{0,0}(y_1,\tau)^2\h_{1,1}(y_1,\tau)^2}$$
into $f^2_+s=-z(s-v_+)(s+v_-)$ in Proposition \ref{prop:fh-relation} 
and  putting $P=P_0$, we have 
$$c\frac{1}{\sqrt{z}}
\frac{\h_{0,1}(0,\tau)^2\h_{1,0}(0,\tau)^2}{\h_{0,0}(0,\tau)^4}
=zv_+v_-=1 \Leftrightarrow c=2.
$$ 
Similarly, we can obtain the expression of $f^2_-(P)$ by using 
$s(P)
=-\dfrac{1}{\sqrt{z}}\dfrac{\h_{1,1}(y_2,\tau)^2}{\h_{0,0}(y_2,\tau)^2}.
$
\end{proof}

\begin{theorem}
\label{th:1-v}
The function $1-v$ is expressed as
\begin{align*}
1-v&=
\Big(
\frac{\h_{0,0}(\tau)^2\h_{0,1}(y_1,\tau)\h_{0,1}(y_1,\tau)}
{2\h_{0,1}(\tau)\h_{1,0}(\tau)\h_{0,0}(y_1,\tau)^2}
+
\frac{\h_{0,0}(\tau)^2\h_{0,1}(y_2,\tau)\h_{0,1}(y_2,\tau)}
{2\h_{0,1}(\tau)\h_{1,0}(\tau)\h_{0,0}(y_2,\tau)^2}
\Big)^2\\
&=
\Big(\frac{
\h_{0,0}(\tau)^2\h_{0,1}(y_1,\tau)\h_{0,1}(y_1,\tau)
-\i\h_{0,0}(\tau)^2\h_{0,1}(y'_1,\tau)\h_{0,1}(y'_1,\tau)}
{2\h_{0,1}(\tau)\h_{1,0}(\tau)\h_{0,0}(y_1,\tau)^2}\Big)^2
,
\end{align*}
where 
$\ds{y_1'=\int_{P_0}^{\sigma(P)} 2\f_1}$.
\end{theorem}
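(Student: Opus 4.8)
The plan is to deduce everything from a single algebraic identity on $C_z$ and then translate each factor into theta quotients by Lemma~\ref{lem:f+-}. First I would record the purely algebraic fact that, from $s=w^2/(v(1-vz))$, the defining equation $w^4=v^3(1-v)(1-vz)$, and $f_++f_-=2w/v$ in \eqref{eq:f+-},
\[
1-v=\frac{w^4}{v^3(1-vz)}=s\cdot\frac{w^2}{v^2}=\frac{s}{4}(f_++f_-)^2 .
\]
Thus it suffices to express $s\,f_+^2$ and $s\,f_-^2$ in theta form.

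Next I would multiply the formulas of Lemma~\ref{lem:f+-} for $s(P)$ and $f_\pm^2(P)$ and substitute $\sqrt z$ from \eqref{eq:sqrt-z}, obtaining
\[
s\,f_\pm^2=\Big(\frac{\h_{0,0}(\tau)^2}{\h_{0,1}(\tau)\h_{1,0}(\tau)}\,
\frac{\h_{0,1}(y,\tau)\h_{1,0}(y,\tau)}{\h_{0,0}(y,\tau)^2}\Big)^2,
\]
with $y=y_1$ for the $+$ sign and $y=y_2$ for the $-$ sign (the two minus signs in Lemma~\ref{lem:f+-} cancel). Taking square roots with the branch pinned at the real point $P_{v_-}$, where $0<v_-<1$ gives positivity exactly as in the proof of Lemma~\ref{lem:f+-} and $\tau$ is pure imaginary by Corollary~\ref{cor:tau-pure-im}, the identity $1-v=\tfrac14\big(\sqrt{s f_+^2}+\sqrt{s f_-^2}\big)^2$ produces the first displayed formula.

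For the second formula I would route the $f_-$-summand through the commutative diagram \eqref{eq:diagram}. Comparing the two sides of $\pr_1\circ\sigma=\psi\circ\pr_2$ gives $f_+(\sigma P)=\i\,f_-(P)$ and $s(\sigma P)=-s(P)$, so $s\,f_-^2$ at $P$ equals $s\,f_+^2$ at $\sigma P$, whose Lemma~\ref{lem:f+-} expression is the same quotient in the variable $y_1'=\int_{P_0}^{\sigma P}2\f_1$. The phase $-\i$ arises by combining $f_-(P)=-\i\,f_+(\sigma P)$ with the globally constant branch $(s(P)/s(\sigma P))^{1/2}=\pm\i$; to present both summands over the common denominator $\h_{0,0}(y_1,\tau)^2$ I would then invoke the relation $y_2=-y_1'$ from \eqref{eq:rel-jmaths}, the parities of $\h_{0,0},\h_{0,1},\h_{1,0}$ in \eqref{eq:basic-theta}, and Theorem~\ref{th:image-Schwarz}, using that $\h_{0,0}(y_1,\tau)$ and $\h_{0,0}(y_1',\tau)$ both vanish exactly at the $\sigma$-fixed points $P_{1/z},P_\infty$ by Proposition~\ref{prop:images}. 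The overall constant is fixed by evaluating at $P_0$, where $y_1=y_1'=0$ and $1-v=1$.

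The main obstacle is the consistent bookkeeping of the two square-root branches and the surviving global phase. Both the branch of $\sqrt{s f_\pm^2}$ and the branch of $(s(P)/s(\sigma P))^{1/2}$ must be fixed simultaneously, and the delicate point is to verify that the phase on the second summand is precisely $-\i$ rather than $+\i$ or $\pm1$, and that the two theta denominators genuinely reconcile into $\h_{0,0}(y_1,\tau)^2$. This is exactly where Theorem~\ref{th:image-Schwarz} and the $\sigma$-equivariance $\jmath_{\L_2}=-\jmath_{\L_1}\circ\sigma$ of \eqref{eq:rel-jmaths} must be used in tandem, together with a careful evaluation at the real ramification points of Corollary~\ref{cor:ram-pts}.
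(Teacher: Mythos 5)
Your overall route is the same as the paper's: start from the algebraic identity $1-v=s\cdot w^2/v^2=\tfrac{s}{4}(f_++f_-)^2$, convert $s$ and $f_\pm^2$ into theta quotients via Lemma \ref{lem:f+-} and \eqref{eq:sqrt-z} (your observation that the two minus signs cancel in $s f_-^2$ is exactly what happens), and obtain the second expression from $s(\sigma P)=-s(P)$ and the relation between $f_+(\sigma P)$ and $f_-(P)$ coming from the diagram \eqref{eq:diagram}, with $y_2=-y_1'$ and the evenness of the thetas. So the decomposition, the key lemmas, and the use of Theorem \ref{th:image-Schwarz} to reconcile the denominators all match the paper's proof.

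There is, however, one step that fails as you state it: you fix the relative sign of the two square roots $\sqrt{s f_+^2}$ and $\sqrt{s f_-^2}$ by ``pinning the branch at $P_{v_-}$.'' But $f_-$ vanishes at $P_{v_-}$ (indeed $f_-=0$ exactly when $w^2=\sqrt{1-z}\,v^2$, which holds at $P_{v_-}$ since $1-v_-=\sqrt{1-z}\,v_-$), so $\sqrt{s f_-^2}$ is zero there and its sign cannot be read off at that point; $P_{v_-}$ only serves to normalize $s$ itself, as in Lemma \ref{lem:f+-}. The paper instead settles the $\pm$ ambiguity by evaluating at $P_0$: there $y_1=y_2=0$, the two theta summands coincide and are nonzero, so the ``$-$'' combination would vanish while $1-v=1$ does not, forcing the ``$+$'' sign. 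You do invoke $P_0$ later, but only to fix ``the overall constant,'' which is already determined; the point you actually need it for is this relative sign. Replacing the $P_{v_-}$ pinning by the evaluation at $P_0$ closes the gap and makes your argument coincide with the paper's.
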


\begin{proof}
By \eqref{eq:def-s} and \eqref{eq:f+-}, we have
$$\frac{f_++f_-}{2}=\frac{w}{v},\quad 
 s\cdot\frac{w^2}{v^2}=\frac{w^4}{v^3(1-vz)}=1-v.
$$
Lemma \ref{lem:f+-} yields that 
$$f_+(P) =\pm \sqrt{2}\frac{\h_{0,1}(y_1,\tau)\h_{1,0}(y_1,\tau)}
{\h_{0,0}(y_1,\tau)\h_{1,1}(y_1,\tau)},\quad
f_-(P)=\pm \sqrt{2}\i\frac{\h_{0,1}(y_2,\tau)\h_{1,0}(y_2,\tau)}
{\h_{0,0}(y_2,\tau)\h_{1,1}(y_2,\tau)}.
$$
Thus we have 
\begin{align*}
1-v=&s(P)\frac{(f_+(P)+f_-(P))^2}{4}\\
=&
\frac{1}{2\sqrt{z}}
\Big(\frac{\h_{0,1}(y_1,\tau)\h_{1,0}(y_1,\tau)}
{\h_{0,0}(y_1,\tau)\h_{1,1}(y_1,\tau)}
\frac{\h_{1,1}(y_1,\tau)}{\h_{0,0}(y_1,\tau)}
\pm\i 
\frac{\h_{0,1}(y_2,\tau)\h_{1,0}(y_2,\tau)}
{\h_{0,0}(y_2,\tau)\h_{1,1}(y_2,\tau)}
\frac{-\i \h_{1,1}(y_2,\tau)}{\h_{0,0}(y_2,\tau)}
\Big)^2\\
=&\frac{\h_{0,0}(\tau)^4}{4\h_{0,1}(\tau)^2\h_{1,0}(\tau)^2}
\Big(\frac{\h_{0,1}(y_1,\tau)\h_{1,0}(y_1,\tau)}
{\h_{0,0}(y_1,\tau)^2}
\pm
\frac{\h_{0,1}(y_2,\tau)\h_{1,0}(y_2,\tau)}
{\h_{0,0}(y_2,\tau)^2}
\Big)^2\\
=&\Big(
\frac{\h_{0,0}(\tau)^2\h_{0,1}(y_1,\tau)\h_{0,1}(y_1,\tau)}
{2\h_{0,1}(\tau)\h_{1,0}(\tau)\h_{0,0}(y_1,\tau)^2}
\pm
\frac{\h_{0,0}(\tau)^2\h_{0,1}(y_2,\tau)\h_{0,1}(y_2,\tau)}
{2\h_{0,1}(\tau)\h_{1,0}(\tau)\h_{0,0}(y_2,\tau)^2}
\Big)^2
\end{align*}
by Theorem \ref{th:image-Schwarz}.
Let us determine the sign for these expressions.
Since $1-v$ does not vanish at $P=P_0$ and 
$$\frac{\h_{0,1}(y_1,\tau)\h_{1,0}(y_1,\tau)}
{\h_{0,0}(y_1,\tau)^2}
-
\frac{\h_{0,1}(y_2,\tau)\h_{1,0}(y_2,\tau)}
{\h_{0,0}(y_2,\tau)^2}
$$
vanishes at $P=P_0$, we should select ``$+$''.
Hence we have the first expression of $1-v$.
Since $s(\sigma(P))=-s(P)$ by \eqref{eq:def-s}, and 
$f_+(\sigma(P))=-\i f_-(P)$  by the diagram \eqref{eq:diagram}, 
we have the second expression of $1-v$.
\end{proof}

\begin{cor}
\label{th:w/v}
The rational functions $w^2/v^2$  and $w/v$ on $C_z$ are expressed as 
\begin{align*}
\frac{w^2}{v^2}
=&\frac{1}{2}
\Big(\frac{\h_{0,1}(y_1,\tau)\h_{1,0}(y_1,\tau)}
{\h_{0,0}(y_1,\tau)\h_{1,1}(y_1,\tau)}
+\i 
\frac{\h_{0,1}(y_2,\tau)\h_{1,0}(y_2,\tau)}
{\h_{0,0}(y_2,\tau)\h_{1,1}(y_2,\tau)}
\Big)^2,
\\
\frac{w}{v}=&\frac{-1}{\sqrt{2}}
\Big(\frac{\h_{0,1}(y_1,\tau)\h_{1,0}(y_1,\tau)}
{\h_{0,0}(y_1,\tau)\h_{1,1}(y_1,\tau)}
+\i 
\frac{\h_{0,1}(y_2,\tau)\h_{1,0}(y_2,\tau)}
{\h_{0,0}(y_2,\tau)\h_{1,1}(y_2,\tau)}
\Big). 
\end{align*}
\end{cor}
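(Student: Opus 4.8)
The plan is to route everything through the two functions $f_+,f_-$ of \eqref{eq:f+-}, whose squares are already computed in Lemma \ref{lem:f+-}. By \eqref{eq:f+-} we have $\frac{w}{v}=\frac{f_++f_-}{2}$, and by \eqref{eq:def-s} together with $w^4=v^3(1-v)(1-vz)$ we have $s=\frac{v^2(1-v)}{w^2}$, hence $\frac{w^2}{v^2}=\frac{1-v}{s}$. Abbreviating $A_i=\frac{\h_{0,1}(y_i,\tau)\h_{1,0}(y_i,\tau)}{\h_{0,0}(y_i,\tau)\h_{1,1}(y_i,\tau)}$, which is a single-valued meromorphic function of $P\in C_z$ because this theta quotient is unchanged under $y_i\mapsto y_i+p\tau+q$ ($p,q\in\Z$), Lemma \ref{lem:f+-} gives $f_+=\e_+\sqrt2\,A_1$ and $f_-=\e_-\sqrt2\,\i A_2$ for two signs $\e_\pm\in\{+1,-1\}$ which, on the connected curve $C_z$, are honest constants.

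First I would prove the formula for $w^2/v^2$. Dividing the expression for $1-v$ in Theorem \ref{th:1-v} by the value $s=\frac{1}{\sqrt z}\frac{\h_{1,1}(y_1,\tau)^2}{\h_{0,0}(y_1,\tau)^2}$ from Lemma \ref{lem:f+-}, then simplifying the scalar prefactor with $\sqrt z=\frac{2\h_{0,1}(\tau)^2\h_{1,0}(\tau)^2}{\h_{0,0}(\tau)^4}$ of \eqref{eq:sqrt-z} and converting the $y_2$-thetas to $y_1$-thetas by $\h_{0,0}(y_1,\tau)\h_{1,1}(y_2,\tau)=\i\,\h_{1,1}(y_1,\tau)\h_{0,0}(y_2,\tau)$ of Theorem \ref{th:image-Schwarz}, I expect all $\h_{1,1}$-factors to cancel and leave $\frac{w^2}{v^2}=\frac{1-v}{s}=\frac12(A_1+\i A_2)^2$. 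Matching this with $\frac{w^2}{v^2}=\frac{(f_++f_-)^2}{4}=\frac12(\e_+A_1+\e_-\i A_2)^2$, the cross term $\i A_1A_2$ forces $\e_+=\e_-=:\e$ (this is just the relative-sign choice already fixed in Theorem \ref{th:1-v}). Hence $\frac{w}{v}=\frac{\e}{\sqrt2}(A_1+\i A_2)$, and it only remains to pin down the global sign $\e$.

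The hard part is exactly this sign: being the \emph{odd} function $w/v$, it has the same square as the even data $w^2/v^2$, $s$ and $1-v$, so $\e$ cannot be read off from any of them, nor from $\sigma$-equivariance (under $\sigma$ one has $(y_1,y_2)\mapsto(-y_2,y_1)$, sending $A_1+\i A_2$ to $\i(A_1+\i A_2)$, which is compatible with both signs). I would determine $\e$ by a limit at $P_0$ along the real first sheet $0<v<1$, in the spirit of the l'H\^{o}pital argument in the proof of Theorem \ref{th:image-Schwarz}. There $w/v\to+\infty$ as $v\to0^+$. From \eqref{eq:ex-dual-basis} with $\eta_2(P_0)=0$, and $\int_{\b_2}\eta_1=\int_{\b_1}\sigma^*\eta_1=-\i\int_{\b_1}\eta_1$, one gets $y_1\to0^+$ and $y_2\sim\i y_1$; since $\h_{1,1}$ has a simple zero at $0$ with $\h_{1,1}'(0,\tau)<0$ for the pure imaginary $\tau$ of Corollary \ref{cor:tau-pure-im}, while $\h_{0,0}(\tau),\h_{0,1}(\tau),\h_{1,0}(\tau)>0$, both $A_1$ and $\i A_2$ behave like $K/y_1$ with $K=\frac{\h_{0,1}(\tau)\h_{1,0}(\tau)}{\h_{0,0}(\tau)\h_{1,1}'(0,\tau)}<0$, so $A_1+\i A_2\to-\infty$. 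Comparing the signs of the two real divergent sides forces $\e=-1$, which is the asserted formula for $w/v$; squaring it returns the formula for $w^2/v^2$. The one subtlety to check is that $w/v$ and $A_1+\i A_2$ are genuinely real along this arc, so that comparing signs is legitimate: this holds because pure imaginary $\tau$ makes $\h_{0,0},\h_{0,1},\h_{1,0}$ real and $\h_{1,1}$ purely imaginary at the real $y_1$ and imaginary $y_2$ occurring there.
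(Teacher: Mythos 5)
Your derivation of the formula for $w^2/v^2$ is correct and coincides with the paper's: both reduce to $\frac{w^2}{v^2}=\frac{1-v}{s}=\frac{(f_++f_-)^2}{4}$ and use Theorem \ref{th:1-v}, \eqref{eq:sqrt-z} and the relation $\h_{0,0}(y_1,\tau)\h_{1,1}(y_2,\tau)=\i\h_{1,1}(y_1,\tau)\h_{0,0}(y_2,\tau)$ to cancel the $\h_{1,1}$-factors; your observation that the cross term pins $\e_+=\e_-$ is a clean way to package the reduction to a single global sign. Where you genuinely diverge is in determining that sign. The paper factors $w/v=\sqrt{1-v}\cdot s^{-1/2}$ along the real arc, fixes the branch of $\sqrt{1-v}$ by its value $1$ at $P_0$, and then pins the sign of $s^{-1/2}$ by evaluating at $P_{v_-}$, where $y_1=\frac12$ and the third formula of \eqref{eq:basic-theta} gives $\h_{0,0}(\frac12,\tau)/\h_{1,1}(\frac12,\tau)=-\h_{0,1}(\tau)/\h_{1,0}(\tau)$. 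You instead compare leading asymptotics of the two sides at $P_0$ along the sheet $w>0$, using $y_1\to0^+$, $y_2\sim\i y_1$ (which is exactly the l'H\^opital computation $\f_2(P_0)=\i\f_1(P_0)$ from Theorem \ref{th:image-Schwarz}), and $\h_{1,1}'(0,\tau)<0$ for pure imaginary $\tau$ (Jacobi's derivative formula $\h_{1,1}'(0,\tau)=-\pi\h_{0,0}(\tau)\h_{0,1}(\tau)\h_{1,0}(\tau)$ in this convention, so you should cite or verify this sign). Both routes are valid; yours avoids the auxiliary branch bookkeeping for $\sqrt{1-v}$ and $s^{-1/2}$, at the cost of an asymptotic matching. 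One factual slip in your final remark: for pure imaginary $\tau$, $\h_{1,1}(y,\tau)$ is \emph{real} for real $y$ and purely imaginary for purely imaginary $y$ (not purely imaginary in both cases as you state); taken literally your parenthetical would make $A_1$ purely imaginary and destroy the reality of $A_1+\i A_2$, and it also contradicts your own claim that $K=\h_{0,1}(\tau)\h_{1,0}(\tau)/(\h_{0,0}(\tau)\h_{1,1}'(0,\tau))$ is a negative real number. The slip is harmless because the leading-order computation $A_1,\i A_2\sim K/y_1$ with $K<0$ real already supplies both the reality and the sign you need, but the sentence should be corrected.
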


\begin{proof}
Since $$\frac{w^2}{v^2}=\frac{1-v}{s}=\frac{(f_++f_-)^2}{4},$$ 
Theorem \ref{th:1-v} together with  Theorem \ref{th:image-Schwarz} yields that
\begin{align*}
\frac{w^2}{v^2}
&=\frac{1}{2}
\Big(\frac{\h_{0,1}(y_1,\tau)\h_{1,0}(y_1,\tau)}
{\h_{0,0}(y_1,\tau)^2}
\frac{\h_{0,0}(y_1,\tau)}
{\h_{1,1}(y_1,\tau)}
+
\frac{\h_{0,1}(y_2,\tau)\h_{1,0}(y_2,\tau)}
{\h_{0,0}(y_2,\tau)^2}
\frac{\i\h_{0,0}(y_2,\tau)}{\h_{1,1}(y_2,\tau)}
\Big)^2\\
&=\frac{1}{2}
\Big(\frac{\h_{0,1}(y_1,\tau)\h_{1,0}(y_1,\tau)}
{\h_{0,0}(y_1,\tau)\h_{1,1}(y_1,\tau)}
+\i
\frac{\h_{0,1}(y_2,\tau)\h_{1,0}(y_2,\tau)}
{\h_{0,0}(y_2,\tau)\h_{1,1}(y_2,\tau)}
\Big)^2,\\
\frac{w}{v}&=\frac{\pm1}{\sqrt{2}}
\Big(\frac{\h_{0,1}(y_1,\tau)\h_{1,0}(y_1,\tau)}
{\h_{0,0}(y_1,\tau)\h_{1,1}(y_1,\tau)}
+\i
\frac{\h_{0,1}(y_2,\tau)\h_{1,0}(y_2,\tau)}
{\h_{0,0}(y_2,\tau)\h_{1,1}(y_2,\tau)}
\Big). 
\end{align*}
To determine the sign for $w/v$, we consider 
multi-valued functions $\sqrt{1-v}$ and $1/\sqrt{s}$ on $C_z$.
We assign branches of these functions so that 
they become positive real  for $P_v$ with $v\in (0,1)$.
Since $\ds{\lim_{P_v\to P_0,v\in(0,1)}\sqrt{1-v}=1}$,
this function admits an expression 
$$\sqrt{1-v}=
\frac{\h_{0,0}(\tau)^2}{2\h_{0,1}(\tau)\h_{1,0}(\tau)}
\Big(\frac{\h_{0,1}(y_1,\tau)\h_{1,0}(y_1,\tau)}
{\h_{0,0}(y_1,\tau)^2}
+
\frac{\h_{0,1}(y_2,\tau)\h_{1,0}(y_2,\tau)}
{\h_{0,0}(y_2,\tau)^2}
\Big)
$$
for $P_v$ with $v\in (0,1)$ by Theorem \ref{th:1-v}.
By Lemma \ref{lem:f+-}, we have 
$$\frac{1}{\sqrt{s}}=\pm \frac{\sqrt{2}\h_{0,1}(\tau)\h_{1,0}(\tau)}
{\h_{0,0}(\tau)^2}\cdot 
\frac{\h_{0,0}(y_1,\tau)}{\h_{1,1}(y_1,\tau)}.
$$
By substituting $P=P_{v_-}$ into this equality, we determine its sign. 
Since $v_-\in (0,1)$, $1/\sqrt{s}$ is positive for $P_{v_-}$. 
On the other hand, we have
$$y_1=\int_{P_0}^{P_{v_-}} 2\f_1=\frac{1}{2},\quad 
\frac{\h_{0,0}(1/2,\tau)}{\h_{1,1}(1/2,\tau)}=
-\frac{\h_{0,1}(0,\tau)}{\h_{1,0}(0,\tau)}
$$
by the third of \eqref{eq:basic-theta}.
Since $\h_{k,l}(\tau)$ are positive real for any pure imaginary $\tau$, 
it turns out that
$$\frac{1}{\sqrt{s}}=-\frac{\sqrt{2}\h_{0,1}(\tau)\h_{1,0}(\tau)}
{\h_{0,0}(\tau)^2}\cdot 
\frac{\h_{0,0}(y_1,\tau)}{\h_{1,1}(y_1,\tau)}
=-\i \frac{\sqrt{2}\h_{0,1}(\tau)\h_{1,0}(\tau)}{\h_{0,0}(\tau)^2}
\cdot \frac{\h_{0,0}(y_2,\tau)}{\h_{1,1}(y_2,\tau)}.
$$
The expressions for $\sqrt{1-v}$ and $1/\sqrt{s}$
together with 
$$
\frac{w}{v}=v^{-1/4}(1-v)^{1/4}(1-vz)^{1/4}
=\sqrt{1-v}\cdot \frac{1}{\sqrt{s}} \quad (v\in (0,1))
$$
yield the expression for $w/v$.
\end{proof}
\end{appendix}

\end{document}